\title{Random Groups are not n-cubulated}
\author{Zachary Munro}
\address{Burnside Hall, McGill University, 805 Sherbrooke St W, Montreal, Quebec H3A 2K6, Canada}
\address{Faculty of Mathematics, Amado Building, Technion, Haifa, 3200003, Israel}
\email{munrozachary@campus.technion.ac.il}
\date{}
    \renewcommand*{\backrefalt}[4]{\ifcase #1 (Not cited).\or (Cited p.~#2).\else (Cited pp.~#2).\fi} 
\newcommand\enumlabel[1][]{\item[#1]
    \refstepcounter{enumlabelcount}\def\@currentlabel{#1}}\makeatother
\newenvironment{claim*}{\medskip\noindent\textbf{Claim:}\hspace{0.5mm}}{}
\newenvironment{claim*proof}{\medskip\noindent\emph{Proof of Claim.}\hspace{0.5mm}}
    {\leavevmode\unskip\penalty9999\hbox{}\nobreak\hfill\quad\hbox{$\diamondsuit$}\medskip}
\newcounter{enumlabelcount}
\newcounter{claimcount}
\newtheorem{theorem}{Theorem}[section]
\newtheorem{lemma}[theorem]{Lemma} 
\newtheorem{corollary}[theorem]{Corollary}
\theoremstyle{definition}
\newtheorem{definition}[theorem]{Definition}
\newtheorem{remark}[theorem]{Remark}
\definecolor{note}{rgb}{0.6,0,0.4}
\let\o\H
\newcommand{\A}{\mathcal{A}}
\newcommand{\B}{\mathcal{B}}
\newcommand{\defeq}{\vcentcolon =}
\newcommand{\dist}{\mathsf{d}}
\renewcommand{\H}{\mathcal{H}}
\renewcommand{\L}{\mathcal{L}}
\newcommand{\N}{\mathbf{N}}
\renewcommand{\P}{\mathcal{P}}
\newcommand{\Pcross}{\P^\pitchfork}
\newcommand{\Ppar}{\P^\parallel}
\newcommand{\Psame}{\P^\vert}
\newcommand{\Prob}{\mathbf{P}}
\newcommand{\s}{\mathfrak{s}}
\newcommand{\Z}{\mathbf{Z}}
\newcommand{\RN}[1]{\textbf{\textup{\uppercase\expandafter{\romannumeral#1}}}}
\DeclareMathOperator{\Fix}{Fix_S}
\DeclareMathOperator{\Isom}{Isom}
\DeclareMathOperator{\lk}{Link}
\DeclareMathOperator{\pre}{Prefix}
\DeclareMathOperator{\suf}{Suffix}
\begin{document}

\begin{abstract}
    A group $G$ has property $FW_n$ if every action on an $n$-dimensional $\mathrm{CAT}(0)$ cube complex has a global fixed-point. This provides a natural stratification between Serre's property $FA$ and Kazhdan's property $(T)$. For every $n$, we show that random groups in the plain words density model have $FW_n$ with overwhelming probability. The same result holds for random groups in the reduced words density model assuming there are sufficiently many generators. These are the first examples of cubulated hyperbolic groups with $FW_n$ for $n$ arbitrarily large.
    
    We prove several corollaries: Firstly, we deduce a Rips short exact sequence for $FW_n$ groups, i.e., every finitely presented $FW_n$ group is the quotient of a cubulated hyperbolic group with $FW_n$. Secondly, we construct the first example of a finitely generated group of geometric dimension $2$ which acts freely on a locally finite $\mathrm{CAT}(0)$ cube complex but whose every action on a finite-dimensional $\mathrm{CAT}(0)$ cube complex has a global fixed-point.  
\end{abstract}

\maketitle

\section{Introduction}

The density model of random groups, defined by Gromov in \cite{gromov:asymptoticInvariants}, gives credence to the claim that ``most'' groups are $\delta$-hyperbolic. The model depends upon a parameter $d\in (0,1)$, the \emph{density}, and a parameter $L\in\mathbf{N}$, the \emph{length}. One typically fixes $d$ and studies properties of a random group as $L\to \infty$. A random group at density $d$ is said to satisfy a property $Q$ \emph{with overwhelming probability} if the probability that $Q$ holds goes to $1$ as $L\to \infty$. This is analogous to the study of random graphs in the Erd\o{o}s--Renyi model $G(n,p)$, in which one typically fixes $p$, the probability of adding an edge, and takes the number of vertices $n\to\infty$. More precise definitions are given in Section~\ref{sec:mainTheorem}.

A random group is defined by a presentation, and the relators are randomly selected from a set of words of length $L$. We work mainly in the \emph{plain words model}, in which the relators of the defining presentation are selected from all words of length $L$. In the \emph{reduced words model}, the relators are selected from the reduced words of length $L$.

Random groups in the reduced words model behave vastly differently above and below the critical density $d=\frac12$. Random groups at densities $d<\frac12$ are infinite hyperbolic, but random groups at densities $d>\frac12$ are either trivial or $\Z/2\Z$ \cite{gromov:asymptoticInvariants,ollivier:sharp}. Ollivier proved that a similar phase transition takes place in the plain words model, but with critical density $d=1-\theta$, where $\theta>\frac12$ depends on the number of generators \cite[Theorem~4]{ollivier:sharp}, and $\theta\to \frac12$ as $k\to\infty$. The density model of random groups has proven to be both a fruitful testing ground for conjectures and an important source of examples in group theory. For example, Calegari and Walker were able to give a positive answer to Gromov's surface subgroup question for reduced word random groups \cite{calegari:surfaceSubgroups}. And both the reduced and plain word models provide many examples of non-left-orderable and property (T) groups \cite{orlef:randomNotOrderable},\cite{kotowskikotowski:randomGroupsPropertyT,zuk:propertyT}.

We study the actions of random groups on $\mathrm{CAT}(0)$ cube complexes. Despite the increasing popularity of $\mathrm{CAT}(0)$ cube complexes in the past few decades, the study of fixed-point properties for actions on $\mathrm{CAT}(0)$ cube complexes has been relatively neglected. We will take particular interest in the following fixed-point property for actions on finite-dimensional $\mathrm{CAT}(0)$ cube complexes.

\begin{definition}[$FW_n$, $FW_\infty$, $FW$]
    A group $G$ has property $FW_n$ if for every action of $G$ on an $n$-dimensional $\mathrm{CAT}(0)$ cube complex, the fixed-point set of $G$ is non-empty. A group $G$ has $FW_\infty$ if it has $FW_n$ for every $n$. we say $G$ has $FW$ if the fixed-point set of every action of $G$ on a $\mathrm{CAT}(0)$ cube complex is non-empty.
\end{definition}

For a recent discussion on known results regarding $FW_n$ along with many open questions, see \cite{genevois:FWn}. 

Property $FW_1$ is exactly Serre's property $FA$, and property $(T)$ is stronger than any of the above fixed-point properties. The properties $FW_n$ can be thought of as a stratification between the well-studied properties $FA$ and $(T)$. That is, one has the following inclusions 
\[
    FA=FW_1\supset FW_2\supset\cdots\supset FW_\infty\supset FW\supset (T).
\]

At density $d>1/3$, random groups with either plain or reduced words have property (T) \cite{kotowskikotowski:randomGroupsPropertyT, zuk:propertyT}. By a result of Niblo and Reeves \cite{nibloreeves:groupsActingCat0CubeComplexes}, a group with property (T) has property $FW_\infty$, thus at density $d>1/3$ every action on a finite-dimensional $\mathrm{CAT}(0)$ cube complex has a global fixed-point. However, at lower densities this changes. Ashcroft proved that random groups in the reduced words model at density $d<1/4$ act without global fixed-point on finite-dimensional $\mathrm{CAT}(0)$ cube complexes \cite{ashcroft:randomGroups1/4}. Even more strongly, random groups with reduced words at density $d<1/12$ satisfy the $C'(1/6)$ small-cancellation condition \cite[§9.B]{gromov:asymptoticInvariants}, and Wise proved that $C'(1/6)$ groups act properly and cocompactly on $\mathrm{CAT}(0)$ cube complexes \cite{wise:cubulatingC'(1/6)}. Ollivier and Wise strengthened this result to include random groups in the reduced words model at density $d<1/6$ \cite{ollivierwise:randomGroups1/6}. The above results contrast sharply with the main theorem of the article, stated below.


\begin{theorem}
\label{thm:mainTheorem}
    For any $d\in(0,1)$ and $n\in \mathbf N$, a random group in the plain words density model at density $d$ has property $FW_n$ with overwhelming probability.
\end{theorem}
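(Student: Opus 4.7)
The plan is to exploit a structural peculiarity of the plain words model: since a typical random plain word of length $L$ contains cancellations, the presentation of a random group $G$ at any density $d > 0$ should contain, with overwhelming probability, a much richer supply of ``short consequences'' than the corresponding reduced words random group. This extra density should obstruct any action of $G$ on an $n$-dimensional $\mathrm{CAT}(0)$ cube complex. The case $d > 1/3$ is already covered by property $(T)$, but the argument must work at arbitrarily small $d$, where the reduced words model in fact admits cubulations via Ollivier--Wise.

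First, I would establish a density lemma: for any given $n$ and $d > 0$, with overwhelming probability the random relators (after reduction and conjugation) supply enough short elements of the normal closure to defeat any putative low-dimensional wall structure on $G$. The counting input is the number of plain words of length $L$ reducing to a fixed reduced word $w$ of length $\ell$, which grows roughly like a Catalan-type count of walks from $e$ to $w$ on the Cayley tree of the free group $F_k$, and is exponentially large in $L - \ell$. At density $d$, this yields many relators of relatively short reduced length, with the accessible range of short relators growing as $d$ grows.

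Second, I would analyze a hypothetical fixed-point-free action of $G$ on an $n$-dimensional $\mathrm{CAT}(0)$ cube complex $X$. By Sageev duality, such an action yields a wall structure on $G$---a finite family of codimension-$1$ subgroups with specific pairwise transversality patterns---in which at most $n$ walls are pairwise transverse at any point. Each wall imposes a bipartition on a coset space of $G$, and any element of the normal closure, interpreted as a closed loop in the Cayley complex, must cross every wall an even number of times. The $n$-dimensional restriction bounds the combinatorial complexity of walls one may use.

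Finally, I would derive a contradiction using the density lemma: the short relators severely constrain which bipartitions the walls can support, and when combined with the dimension restriction $n$, the walls are forced to identify with one another, yielding a trivial wall structure and hence a global fixed point. The main obstacle is the quantitative last step---proving that the density of short relators guaranteed in the first step is sufficient to collapse \emph{every} $n$-dimensional wall structure uniformly over all actions, not merely the hypergraph cubulation. This is the reverse of Ollivier--Wise's construction: their arguments in the reduced model build walls from short relators, whereas here the plain words model must furnish enough additional short consequences to force those walls to collapse. Quantifying this threshold precisely, uniformly in $d$ and $n$, is the technical heart of the argument.
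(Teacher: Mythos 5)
Your proposal takes a genuinely different route from the paper, and it has a gap that I think is fatal at small densities. The paper does not rely on ``short consequences'' or on the reduced length of the plain words at all. Its mechanism is this: (i) for any action of a free group $F_S$ on an $n$-dimensional $\mathrm{CAT}(0)$ cube complex, one can choose a basepoint $x$ minimizing displacement and then build a \emph{progressing automaton} of bounded size and $\lambda$-large growth, all of whose accepted words move $x$; (ii) the plain words model allows one to pass to the finite-index subgroup $H_m$ generated by length-$m$ words, whose associated presentation $\widehat G_m$ is again random over the much larger alphabet $\widehat S$ with $|\widehat S^\pm| = |S^\pm|^m$; (iii) by taking $m$ large, the accepted languages of the finitely many candidate automata become dense enough (over the alphabet $\widehat S$) to intersect the random relator set with overwhelming probability, yielding a contradiction. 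The whole point of using plain rather than reduced words is that concatenations of arbitrary length-$m$ blocks need not be reduced, which is precisely what lets one treat $H_m$ via the plain-words quotient $\widehat G_m$ and blow up the effective alphabet size. The constructive ingredient is the progressing automaton, built by a careful case analysis on the partition of $S^\pm$ into forward, backward, and parallel elements relative to a closest separating hyperplane, with the dimension $n$ controlling how many times one can recurse.

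The gap in your plan is the ``density lemma'' for short reduced consequences and the claim that short relators ``collapse'' wall structures. At density $d$ near $0$, a random plain word of length $L$ reduces with overwhelming probability to a word of length concentrated near $\frac{2k-2}{2k-1}L$, and the set of plain words whose reduction is substantially shorter is exponentially rare; at density $d$ one takes only $(2k)^{dL}$ samples, so for small $d$ you will not see any relators whose reduced length is a small fraction of $L$. Thus the ``extra density of short consequences'' you are banking on simply is not present at small $d$, which is exactly the regime where the reduced-words model admits proper cocompact cubulations. Even setting aside the counting, you never give a mechanism by which short relators would force distinct walls to coincide uniformly over all $n$-dimensional cube complexes; the known obstruction results (Niblo--Reeves for $(T)$, or the author's progressing-automaton approach) work by finding a relator that acts nontrivially on a well-chosen basepoint, not by restricting bipartitions supported by walls. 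Finally, note the paper proves essentially the same result for the \emph{reduced} words model once there are enough generators, which shows the plain-versus-reduced distinction is really about effective alphabet size (via $H_m$), not about the availability of short reduced consequences.
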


In a sense, this is the strongest result one could hope for, since it holds for any density $d\in (0,1)$ and any dimension $n\in\mathbf{N}$. As mentioned above, the statement was previously known to hold for $d>\frac13$ and any $n\in\mathbf{N}$ by invoking property (T). However, our result circumvents the use of property (T). For statements concerning all densities $d\in(0,1)$, the theorem was only known to hold in the cases $n=1$ \cite{dahmaniguirardelprzytycki:randomDontSplit} and $n=2$ \cite{munro:randomCAT(0)SquareComplex}. In the context of small-cancellation, Jankiewicz constructed, for each $p\geq 6$ and $n\in \mathbf N$, examples of $C'(1/p)$ groups which do not act properly on any $n$-dimensional $\mathrm{CAT}(0)$ cube complex \cite{jankiewicz:lowerBounds}. 

Theorem~\ref{thm:mainTheorem} is proven by finding generic conditions which imply $FW_n$. This is analogous to the situation in \cite{zuk:propertyT}, where \.Zuk's criterion functions as a generic condition implying (T). And just as \.Zuk's criterion functions as a flexible means to create examples of groups with (T), we hope our conditions will enable the creation of interesting $FW_n$ groups. Parts of the proof of Theorem~\ref{thm:mainTheorem} are inspired by \cite{jahncke:randomGroupsPropertyFR, dahmaniguirardelprzytycki:randomDontSplit}. In particular, for each $n\in \mathbf N$, we construct a finite family of automata whose accepted languages force a group to act with global fixed point on any $n$-dimensional $\mathrm{CAT}(0)$ cube complex. See Section~\ref{sec:ideaOfProof} below for more details. By modifying our automata to accept only reduced words, we can prove a version of Theorem~\ref{thm:mainTheorem} in the reduced words density model, supposing there are sufficiently many generators.

\begin{theorem}
\label{thm:reducedWords}
    For every $n\in \mathbf N$, there exists $k(n)>0$ such that the following holds: For any $d\in(0,1)$, a random group in the reduced words model with density $d$ and $k\geq k(n)$ generators has property $FW_n$ with overwhelming probability.  
\end{theorem}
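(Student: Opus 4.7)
The plan is to mirror the proof of Theorem~\ref{thm:mainTheorem} with the automata modified to reject non-reduced words. Fix $n$, and let $\A_1, \ldots, \A_m$ be the finite family of automata over $\{a_1^{\pm 1}, \ldots, a_k^{\pm 1}\}$ constructed in the proof of Theorem~\ref{thm:mainTheorem}, so that any free-group action on an $n$-dimensional $\mathrm{CAT}(0)$ cube complex without a global fixed point corresponds to some $\A_i$ whose accepted language consists of words acting nontrivially. For each $i$, I would form the product automaton $\A_i^{\mathrm{red}} \defeq \A_i \times \mathcal{R}$, where $\mathcal{R}$ is the standard reduced-word automaton tracking the last letter read and rejecting any transition that would form a cancellation subword $a_j a_j^{-1}$ or $a_j^{-1} a_j$. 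Since $L(\A_i^{\mathrm{red}}) \subseteq L(\A_i)$, every reduced word accepted by $\A_i^{\mathrm{red}}$ still acts nontrivially on the associated cube complex, so the structural reduction of Theorem~\ref{thm:mainTheorem} applies verbatim.

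It remains to show that the probabilistic argument of Theorem~\ref{thm:mainTheorem}, transposed to the reduced-word density model, succeeds for each $\A_i^{\mathrm{red}}$ once $k$ is large enough. This amounts to a lower bound on the exponential growth rate $\mu_i$ of the number of length-$L$ reduced words in $L(\A_i^{\mathrm{red}})$. In the construction from Theorem~\ref{thm:mainTheorem}, each state of $\A_i$ has out-degree essentially equal to $2k$; taking the product with $\mathcal{R}$ forbids at most one outgoing letter per state of $\A_i^{\mathrm{red}}$ (the letter that would cancel the most recent). A Perron--Frobenius comparison of the transition matrix of $\A_i^{\mathrm{red}}$ with that of the ``all reduced words'' automaton (whose growth rate is exactly $2k-1$) should yield $\mu_i \geq (2k-1) - C_n$, with $C_n$ depending on the finite family for dimension $n$ but not on $k$. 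Choosing $k(n)$ so that this bound exceeds the density threshold $(2k-1)^{1-d}$ from the probabilistic step finishes the proof; densities $d > 1/3$ are already covered by property~(T) \cite{kotowskikotowski:randomGroupsPropertyT, zuk:propertyT}, so only the range $d \in (0, 1/3]$ needs genuine treatment, and over this range the relevant threshold is bounded by $(2k-1)^{2/3}$.

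The main obstacle is making the constant $C_n$ genuinely independent of $k$. The state space of $\A_i^{\mathrm{red}}$ has size proportional to $k$, so a naive spectral bound would pick up $k$-dependent error terms. The estimate must instead exploit the structural fact that, regardless of $k$, only one outgoing transition per state is suppressed by the reduced-word constraint, so that the transition matrix of $\A_i^{\mathrm{red}}$ is a low-rank perturbation of that of the full reduced-word automaton. Once the uniform bound $\mu_i \geq (2k-1) - C_n$ is in place, the remainder of the proof is a direct transcription of the probabilistic step from Theorem~\ref{thm:mainTheorem}, with the plain-word universe of size $(2k)^L$ replaced by the reduced-word universe of size $2k(2k-1)^{L-1}$.
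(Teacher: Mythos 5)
Your approach is essentially the paper's: both modify the automata from Theorem~\ref{thm:mainTheorem} by tracking the most recent $S$-letter so that the accepted language consists only of reduced words, and both conclude by showing the modified automata still have enough growth once $k$ is large. The paper replaces each vertex $v$ of $\Sigma$ by a family $\{(v,s)\}_{s\in S^\pm}$, redirects incoming edges according to their final $S$-letter, and keeps only outgoing edges whose label does not begin with the cancelling letter---this is your product $\A_i\times\mathcal R$ in slightly different notation.

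Where you diverge is in the growth estimate, and here you have introduced a complication that the paper's framework makes unnecessary. You frame the problem as bounding the exponential growth rate $\mu_i$ via a Perron--Frobenius comparison, and you flag as the ``main obstacle'' that a naive spectral bound would pick up $k$-dependent error terms from the size of the state space. But ``$\lambda$-large growth'' in this paper (Definition~\ref{def:largeGrowth}) is a \emph{local}, per-vertex condition: every long enough directed path ends at a vertex with at least $\lambda|\widehat S^\pm|$ out-edges, and this directly gives $\geq c(\lambda|\widehat S^\pm|)^L$ accepted words of length $L$ with no spectral radius needed. The reduced-word modification deletes, at each new vertex $(v,s)$, only those out-edges whose $\widehat S$-label begins with $s^{-1}$---that is, at most $|\widehat S^\pm|/|S^\pm|$ edges out of the $\geq\lambda|\widehat S^\pm|$ available---so the out-degree drops from $\lambda|\widehat S^\pm|$ to at least $(\lambda - \tfrac{1}{|S^\pm|})|\widehat S^\pm|$. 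Choosing $k$ so that $\lambda > \tfrac{1}{2k}$ is all that is needed; there is no $C_n$ to control and no low-rank perturbation argument. A related framing slip: the automata in the proof of Theorem~\ref{thm:mainTheorem} live over $\widehat S$ (length-$m$ words), not over $S$ directly, so the deletion at each state removes a $\tfrac{1}{2k}$-fraction of out-edges rather than ``one outgoing letter per state''; this does not break your argument but does change the accounting, and is worth getting right since it is precisely what makes the deletion cost vanish as $k\to\infty$.
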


As a corollary of Theorem~\ref{thm:reducedWords}, we are able to strengthen the main theorem of \cite{jankiewicz:lowerBounds}, mentioned above.  In fact, for $p\geq 6$, we are able to show that a strong version of the theorem holds for generic $C'(1/p)$ groups as relator length tends to infinity. Note that $FW_n$ implies a group cannot act properly on an $n$-dimensional $\mathrm{CAT}(0)$ cube complex.

\begin{corollary}
\label{cor:cubicalDimension}
    Fix $n\in \mathbf N$ and $0<d<\frac{1}{2p}$ with $p\geq 6$. Let $k(n)$ be as in Theorem~\ref{thm:reducedWords}. Let $G$ be a random group in the reduced words density model at density $d$ with $k\geq k(n)$ generators. Then with overwhelming probability, $G$ is $C'(1/p)$ and has $FW_n$. In particular, for each $n$, there exist $C'(1/p)$ groups with no proper actions on $n$-dimensional $\mathrm{CAT}(0)$ cube complexes.
\end{corollary}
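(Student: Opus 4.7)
The plan is to combine Theorem~\ref{thm:reducedWords} with a classical small-cancellation estimate in the density model and then read off the cubical dimension consequence from the definition of $F\mathcal C_n$.

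First I would recall the following standard fact from Gromov's density model: at density $d$ in the reduced words model with $k$ generators, the probability that a presentation fails to satisfy $C'(1/p)$ goes to $0$ as $L\to\infty$ whenever $2d<1/p$. The heuristic is that a $C'(1/p)$ violation would require two relators of length $L$ to share a common subword of length at least $L/p$; a direct first-moment/union bound over the choice of two relators and the location of such a subword shows that the expected number of such pairs is of order $(2k-1)^{(2d)L}(2k-1)^{-L/p}$ up to polynomial factors, which tends to $0$ precisely when $d<1/(2p)$. I would cite, e.g., \cite[§9.B]{gromov:asymptoticInvariants} or Ollivier's survey for this. Since our hypothesis is $d<1/(2p)$, the random group is $C'(1/p)$ with overwhelming probability.

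Next, I would invoke Theorem~\ref{thm:reducedWords}: since $k\geq k(n)$ and $d\in(0,1)$, the random group has $F\mathcal C_n$ with overwhelming probability. Intersecting the two overwhelming-probability events gives that with overwhelming probability $G$ is simultaneously $C'(1/p)$ and has $F\mathcal C_n$. This proves the first assertion.

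For the ``in particular'' clause, I would argue by contradiction. Fix $n$ and choose $k\geq k(n)$ so that the above holds; in particular, such a $G$ exists (and is infinite, since at density $d<1/2$ random groups are infinite hyperbolic by \cite{gromov:asymptoticInvariants,ollivier:sharp}, which is automatic for $d<1/(2p)$ with $p\geq 6$). Suppose the cubical dimension of $G$ were at most $n$. Then $G$ would act properly on some $n$-dimensional $\mathrm{CAT}(0)$ cube complex $X$. By $F\mathcal C_n$, this action fixes a point $x\in X$, so $G$ embeds in the stabilizer of $x$; by properness that stabilizer is finite, contradicting infiniteness of $G$. Hence the cubical dimension of $G$ exceeds $n$. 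Since $n$ was arbitrary, there exist $C'(1/p)$ groups with arbitrarily high cubical dimension.

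The only nontrivial input is Theorem~\ref{thm:reducedWords}; the rest is a routine application of known estimates plus the definition of $F\mathcal C_n$, so no step here should present a genuine obstacle.
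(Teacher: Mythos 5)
Your proof is correct and takes essentially the same approach as the paper: both invoke Gromov's theorem that reduced-word random groups at density $d<\frac{1}{2p}$ are $C'(1/p)$ with overwhelming probability, combined with Theorem~\ref{thm:reducedWords}, and intersect the two events. Your extra care in spelling out the ``in particular'' clause --- in particular noting that one needs the random group to be infinite before concluding the cubical dimension exceeds $n$ --- is a useful clarification of a point the paper leaves implicit in the sentence preceding the corollary.
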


\begin{proof}
    It is a theorem of Gromov that $G$ is $C'(1/p)$ with overwhelming probability \cite[§9.B]{gromov:asymptoticInvariants}. See also \cite[Proposition~1.8]{ollivierwise:randomGroups1/6}. By Theorem~\ref{thm:reducedWords}, $G$ has property $FW_n$ with overwhelming probability. 
\end{proof}

Genevois constructs cubulated groups with $FW_n$ for arbitrarily large $n$ \cite{genevois:FWn}. However, the groups he constructs are virtually free abelian. Corollary~\ref{cor:cubicalDimension} provides the first examples of cubulated hyperbolic groups with $FW_n$ for arbitrarily large $n$.

Theorem~\ref{thm:mainTheorem} and Theorem~\ref{thm:reducedWords} produce many examples of groups with $FW_n$ but not $FW_\infty$. To produce a group with $FW_\infty$, Corollary~\ref{cor:cubicalDimension} can be utilized in a construction similar to that in \cite{jankiewiczWise:curiouslyCubulated} to form an infinite $C'(1/6)$ complex $Z$ with $FW_\infty$ fundamental group. Thus $\pi_1 Z$ acts freely on a locally finite $\mathrm{CAT}(0)$ cube complex, but every action of $\pi_1 Z$ on a finite-dimensional $\mathrm{CAT}(0)$ cube complex has a global fixed-point. Previously, it was known that Thompson's group $V$ has $FW_\infty$ \cite{genevois:thompsonV} yet acts freely on a locally finite $\mathrm{CAT}(0)$ cube complex \cite{farley:pictureGroups}. However, the example below is the first with cohomological dimension $2$.

\begin{theorem}
\label{thm:aMonster}
    There exists a finitely generated $FW_\infty$ group with cohomological dimension $2$ acting freely on a locally finite $\mathrm{CAT}(0)$ cube complex.
\end{theorem}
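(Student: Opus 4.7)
The plan is to realize $G$ as the fundamental group of an infinite $C'(1/6)$ presentation $2$-complex $Z$ over a fixed finite alphabet, aggregating, one for each $n$, a finite $C'(1/6)$ random presentation with property $F\mathcal C_n$ furnished by Corollary~\ref{cor:cubicalDimension}. The skeleton of the construction mirrors that of Jankiewicz--Wise \cite{jankiewiczWise:curiouslyCubulated}.

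Concretely, fix a finite alphabet $S$ and, for each $n$, invoke Corollary~\ref{cor:cubicalDimension} to select a finite set $R_n\subset F(S)$ of reduced words of common length $L_n$ such that $H_n\defeq \langle S\mid R_n\rangle$ is $C'(1/6)$ and has $F\mathcal C_n$. (If the bound $k(n)$ of Theorem~\ref{thm:reducedWords} exceeds $|S|$ for large $n$, absorb the extra generators via an initial HNN-type trick into a fixed finite $S$.) Choose $L_n$ growing quickly enough relative to $L_1,\ldots,L_{n-1}$ and $|R_1|,\ldots,|R_{n-1}|$ that the combined presentation $\langle S\mid R\rangle$ with $R\defeq\bigcup_n R_n$ remains $C'(1/6)$: any common subword of two relators, from the same or different levels, occupies less than $1/6$ of the shorter. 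Let $Z$ be the presentation $2$-complex of $\langle S\mid R\rangle$ and $G\defeq \pi_1 Z$, which is finitely generated by $S$.

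For the algebraic properties, asphericity of $Z$ follows from the infinite $C'(1/6)$ version of Lyndon's theorem, giving $\mathrm{cd}(G)=2$ and torsion-freeness. For $F\mathcal C_\infty$, note that for each $n$ there is a canonical surjection $H_n\twoheadrightarrow G$ obtained by imposing the remaining relators $R\setminus R_n$. Given any action of $G$ on an $n$-dimensional $\mathrm{CAT}(0)$ cube complex $X$, precomposition with this surjection yields an $H_n$-action on $X$ with a global fixed point by $F\mathcal C_n$ of $H_n$; since the image of $H_n$ equals $G$, the fixed point is $G$-fixed, so $G$ has $F\mathcal C_n$ for every $n$.

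The final property---that $G$ acts freely on a locally-finite $\mathrm{CAT}(0)$ cube complex---is the main obstacle, and it is the reason for the rapid growth of the $L_n$. Following the cubulation strategy of \cite{jankiewiczWise:curiouslyCubulated}, one applies Sageev's dual cube complex construction to a wallspace built on the universal cover $\widetilde Z$ from the $C'(1/6)$ structure. The rapid growth of $L_n$ ensures that walls arising from relators of level $n$ stay far from any fixed bounded region for $n$ large, so that only finitely many wall-orbits meet any bounded set; this yields a locally-finite $\mathrm{CAT}(0)$ cube complex on which $G$ acts freely by the standard $C'(1/6)$ cubulation argument. The technical crux is extending Wise's finite-presentation cubulation to this infinite $C'(1/6)$ setting while simultaneously controlling local finiteness and freeness of the action.
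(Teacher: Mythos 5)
Your high-level plan (aggregate a sequence of $C'(1/6)$ presentations, one with $F\mathcal C_n$ for each $n$, so that the resulting group is a quotient of each, then use asphericity of infinite $C'(1/6)$ complexes for cohomological dimension $2$ and Wise's cubulation for the free action) matches the paper, and your reduction of $F\mathcal C_\infty$ to the existence of surjections $H_n\twoheadrightarrow G$ is exactly right. However, there are two genuine gaps, both stemming from your decision to work over a single fixed finite alphabet $S$ and a one-vertex presentation complex.

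First, the number of generators $k(n)$ demanded by Theorem~\ref{thm:reducedWords} grows with $n$, and you cannot fix $|S|$ once and for all. You flag this and propose ``absorbing the extra generators via an initial HNN-type trick,'' but this is unspecified, and it is far from clear that such a trick can preserve simultaneously the $C'(1/6)$ condition (new relators may create long pieces with the random ones), the $F\mathcal C_n$ property of the level-$n$ group, and the eventual asphericity needed for $\mathrm{cd}=2$. The paper sidesteps this entirely by letting the generating set change with $n$: it takes $S_i$ with $|S_i|=k(i)$, builds a base complex $B$ that is $\mathbf N$ with the presentation complex of $G_i=\langle S_i\mid R_i^1\rangle$ wedge-summed at the vertex $i$, and then adds ``connecting'' $2$-cells (chosen from a reserved set $R_i^2$ of random relators) equating each generator of $G_i$ with a word in the generators of $G_{i-1}$ and of $G_{i+1}$. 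This makes $\pi_1 Z$ a common quotient of all $G_i$, generated by any single $S_i$, without ever shoehorning all levels into one alphabet.

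Second, and more seriously, with a single vertex and relator set $R=\bigcup_n R_n$ your presentation complex $Z$ has infinitely many $2$-cells attached at one $0$-cell, so $Z$ and its universal cover are not locally finite. You try to compensate by an argument about walls from long relators being far from bounded regions, and you correctly identify this as ``the technical crux,'' but the argument is not developed: the dual cube complex's local finiteness is not a direct consequence of ``relator lengths grow,'' and Wise's cubulation theorem, as cited, is stated for (locally finite) $C'(1/6)$ complexes. The paper's construction simply does not encounter this problem: the complex $Z$ built from the line $\mathbf N$ with finite presentation complexes at each vertex and finitely many connecting $2$-cells between consecutive levels is locally finite by construction, so $\pi_1Z$ acts freely on a locally finite $\mathrm{CAT}(0)$ cube complex directly by Wise's theorem. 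Until you either supply the promised extension of the cubulation to a non-locally-finite $C'(1/6)$ complex or switch to a base with infinitely many vertices, your proof is incomplete at precisely the step it declares hardest.
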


Given a finitely presented group $G=\langle S\mid R\rangle$, the \emph{Rips construction} produces another finitely presented group $\Gamma=\langle S\cup\{x,y\}\mid R'\rangle$ satisfying $C'(1/p)$ so that there exists a short exact sequence $1\to \langle x,y\rangle\to \Gamma\to G\to 1$ \cite{rips:subgroupsSmallCancellation}. If $G$ has $FW_n$, then Theorem~\ref{thm:reducedWords} can be used to perform a variation of the Rips construction so that $\Gamma$ is $C'(1/p)$ and has $FW_n$. More precisely, we have the following.

\begin{corollary}[$FW_n$ Rips Construction]
\label{cor:ripsConstruction}
    Let $G$ be a finitely presented group with property $FW_n$. For any $p\geq 6$, $G$ fits into a short exact sequence of the form
    \[
    1\to K\to \Gamma\to G\to 1,
    \]
    where $K$ is a finitely generated $FW_n$ group and $\Gamma$ is a $C'(1/p)$ group with $FW_n$. 
\end{corollary}

To prove Corollary~\ref{cor:ripsConstruction}, we establish a a combination result for short exact sequences of $FW_n$ groups which may be of independent interest. 

\begin{lemma}
\label{lem:shortExactSequence}
    If $K$ and $N$ have property $FW_n$, and $G$ fits into a short exact sequence of the form
    \[
    1\to K\to G\to N\to 1,
    \]
    then $G$ has property $FW_n$. 
\end{lemma}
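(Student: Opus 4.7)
The plan is to reduce the problem to applying $F\mathcal C_n$ in sequence, first to $K$ and then to $N$. Let $G$ act on an $n$-dimensional $\mathrm{CAT}(0)$ cube complex $X$; we seek a global fixed point. The first step is to pass to the first cubical subdivision $X'$ of $X$ (bisecting each edge, so every $n$-cube becomes $2^n$ sub-cubes). This is still an $n$-dimensional $\mathrm{CAT}(0)$ cube complex, and the $G$-action extends naturally. The key gain is that $G$ acts on $X'$ \emph{without inversions}: each cube of $X'$ contains a unique vertex of the original complex $X$, any automorphism preserving a subcube setwise must fix that distinguished vertex, and combinatorial rigidity then forces the whole subcube to be fixed pointwise.

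Next, I would consider $F \defeq \Fix_{X'}(K)$, which is nonempty because $K$ has $F\mathcal C_n$. Two facts make $F$ well-behaved: fixed sets of isometric actions on $\mathrm{CAT}(0)$ spaces are $\mathrm{CAT}(0)$-convex, and the no-inversions property ensures $F$ is a subcomplex of $X'$. Thus $F$ is a convex subcomplex of $X'$, which implies it inherits the structure of an $n$-dimensional $\mathrm{CAT}(0)$ cube complex. Since $K \trianglelefteq G$, the subspace $F$ is $G$-invariant: for $g \in G$, $k \in K$, $x \in F$, one has $k(gx) = g(g\- k g)x = gx$ because $g\- k g \in K$. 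Since $K$ acts trivially on $F$ by construction, the $G$-action on $F$ descends to an action of $N = G/K$.

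Finally, I would apply $F\mathcal C_n$ for $N$ to the action on the $n$-dimensional $\mathrm{CAT}(0)$ cube complex $F$, producing a point $x \in F$ fixed by $N$. Since $x$ is also fixed by $K$, it is fixed by all of $G$, completing the proof.

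The main obstacle is the cube-complex structure on $\Fix(K)$: a priori this set is only $\mathrm{CAT}(0)$-convex, and without passing to the subdivision it could cut through open cubes, so the argument of applying $F\mathcal C_n$ to $N$ on $F$ would not go through. The subdivision step is what resolves this, and after it the rest of the proof is a routine reduction using normality of $K$.
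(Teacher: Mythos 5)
Your overall strategy matches the paper's: pass to the cubical subdivision, observe that $\Fix(K)$ carries an $n$-dimensional $\mathrm{CAT}(0)$ cube complex structure on which $N=G/K$ acts, and apply $F\mathcal C_n$ for $N$. However, the step where you claim ``the no-inversions property ensures $F$ is a subcomplex of $X'$'' is false, and the justification you give is where the error lives. Consider $X=[-1,1]^2$ a single square, $K=\mathbf Z/2$ acting by the coordinate swap $(x,y)\mapsto (y,x)$, and $X'$ the cubical subdivision into four unit squares. No hyperplane of $X'$ is even stabilized by the swap, so the action is without inversions, yet $\Fix_{X'}(K)$ is the diagonal segment from $(-1,-1)$ to $(1,1)$, which cuts through the interiors of two sub-squares and is not a subcomplex. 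The ``combinatorial rigidity'' you invoke does not hold: an isometry of a cube that fixes a vertex (here $(1,1)$, the unique distinguished vertex of $[0,1]^2$) need not fix the cube pointwise --- the swap fixes $(1,1)$ and $(0,0)$ but exchanges $(1,0)$ and $(0,1)$. Subdividing further does not repair this, since the diagonal remains non-subcomplex at every stage.

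The paper avoids this entirely by working at the level of median algebras rather than with the $\ell^2$-fixed set: it lets $Y$ be the set of $0$-cubes of the subdivision fixed by $K$, observes that $Y$ is closed under medians (because medians are unique and $K$-equivariant), and then invokes the fact that any median subalgebra of the $0$-skeleton realizes as the $0$-skeleton of its own $\mathrm{CAT}(0)$ cube complex $X_Y$, with dimension bounded by $n$ because its walls are induced by hyperplanes of $X$. The group $N$ acts on $X_Y$ and $F\mathcal C_n$ applies there. This is the ingredient your proof is missing: you need to pass from the convex-but-not-cubical fixed set to the fixed $0$-cubes viewed as a median subalgebra. With that substitution your argument becomes correct; as written, the subcomplex claim does not hold.
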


\subsection{Idea of proof}
\label{sec:ideaOfProof}
The central idea of this work is to construct, for any dimension $n>0$ and finite set of generators $S$, a finite set of automata $\mathbf \Sigma$ such that the following holds: Let $F_S$, the free group over $S$, act on an $n$-dimensional $\mathrm{CAT}(0)$ cube complex $X$. There exists a point $x \in X^0$ and an automaton $\Sigma\in\mathbf\Sigma$ where, for every word $w$ in the accepted language of $\Sigma$, $wx \neq x$. The automata in $\mathbf\Sigma$ also have large growth, meaning roughly that the number of accepted words of length $L>0$ grows exponentially as a function of $L$. Importantly, the set of automata $\mathbf\Sigma$ is finite and does not depend on the specific choice of $\mathrm{CAT}(0)$ cube complex $X$ or the action of $F_S$ on $X$.  

Consider a group $G = \langle S \mid R \rangle$, where $R$ includes a word accepted by $\Sigma$ for each $\Sigma\in \mathbf\Sigma$. If $G$ acts without a global fixed-point on an $n$-dimensional $\mathrm{CAT}(0)$ cube complex $X$, then there is an induced action of $F_{S}$ on $X$ without a global fixed-point. This leads to a contradiction, since $R$ contains a word accepted by each of the automata in $\mathbf\Sigma$. Hence $G$ has $FW_n$. Random groups are defined by selecting a random set of relators. Since the accepted languages of $\Sigma\in \mathbf\Sigma$ have large growth, a random set of relators intersects each language with high probability. Hence for each $n>0$, random groups have $FW_n$.

The finite set of automata $\mathbf{\Sigma}$ is noteworthy on its own, as it allows for the construction of $FW_n$ groups beyond just random groups. To make it more broadly applicable, we have described the set $\mathbf\Sigma$ in Theorem~\ref{thm:constructingExamples} in a way that allows for easy adaptation to other contexts. Most of the paper is dedicated to constructing the automata in $\mathbf\Sigma$. This requires a careful study of the local arrangement of hyperplanes about a point $x\in X^0$ and their translates. Some of the ideas involved are discussed in Section~\ref{sec:structureOfPaper} below.

\subsection{Structure of the paper}
\label{sec:structureOfPaper}
In Section~2, we establish the necessary preliminaries for the paper. We introduce notation and basic definitions related to formal words and $\mathrm{CAT}(0)$ cube complexes. Additionally, we provide background on hyperplanes and their fundamental properties. These are subspaces of a $\mathrm{CAT}(0)$ cube complex which determine much of its geometry.

In Section~3, we define and analyze \emph{progressing automata} (Definition~\ref{def:progressingAutomaton}), beginning with a review of standard terminology from graph theory and automata. Broadly, an automaton $\Sigma$ accepts a set of words $w$ that label specific directed paths $P \to \Sigma$. Progressing automata incorporate additional structure. They are defined relative to an action of the free group $F_S$ on a $\mathrm{CAT}(0)$ cube complex $X$, with a designated basepoint $x \in X^0$. A key feature of progressing automata is the presence of \emph{checkpoint vertices}, each labeled with a hyperplane near $x$. Intuitively, the defining property of a progressing automaton $\Sigma$ is that if a directed path $P \to \Sigma$ labeled by $w$ terminates at a checkpoint vertex, then a translate of the corresponding hyperplane separates $x$ from $wx$. In particular, every accepted word $w$ satisfies $wx \neq x$. After proving fundamental properties of progressing automata, we introduce \emph{checkpoint trees} (Definition~\ref{def:checkpointTree}), which serve as essential components in constructing progressing automata. The section concludes with a simple case not requiring the machinery of progressing automata. We construct ``by hand'' an automaton whose accepted language moves the basepoint $x$ (Lemma~\ref{lem:generatorsWithFixedPoint}).

Section~4 focuses on the construction of checkpoint trees, involving a detailed analysis of various local hyperplane arrangements around the basepoint $x$. We introduce a partition of $S^\pm=S\cup S^{-1}$, the generators of $F_S$ and their inverses, which underpins our constructions (Definition~\ref{def:ABP}). Given a hyperplane $H$ closest to $x \in X^0$, we partition the generating set $S^\pm = S \cup S^{-1}$ into three subsets: $\mathcal{A}(H)$, $\mathcal{B}(H)$, and $\mathcal{P}(H)$. Specifically, elements of $\mathcal{A}(H)$ move $x$ away from $H$ without crossing it, elements of $\mathcal{B}(H)$ move $x$ across $H$, and elements of $\mathcal{P}(H)$ move $x$ parallel to $H$. The subset $\mathcal{P}(H)$ is further divided into $\Psame(H)$, $\Ppar(H)$, and $\Pcross(H)$, each classified according to how they act on the hyperplane $H$. The construction of progressing automata is carried out on a case-by-case basis, depending on the distribution of elements among these subsets. The main result of this section (Theorem~\ref{thm:mainTechnicalTheorem}) states that if a sufficiently large subset of $S^\pm$ does not fix $x$ pointwise, then there exists a progressing automaton whose size is uniformly bounded by a constant depending only on the dimension of $X$.

In Section~5, we leverage the existence of bounded-size progressing automata to prove Theorem~\ref{thm:mainTheorem}. We begin by introducing random groups and stating the key fact about intersections of high density languages (Lemma~\ref{lem:denseLanguageIntersection}). Next, we summarize relevant properties of $FW_n$. To establish Theorem~\ref{thm:mainTheorem}, we consider a finite-index subgroup $H_m$ of a random group and a related group $\widehat{G}_m$ that quotients onto $H_m$. We demonstrate that the relator set of $\widehat{G}_m$ intersects the languages of a finite family of progressing automata $\mathbf{\Sigma}$ with overwhelming probability, which implies that $\widehat{G}_m$ satisfies $FW_n$. Since property $FW_n$ is preserved under quotients and finite-index supergroups, the theorem follows. In the last couple subsections, we complete the proofs of the remaining results, including Theorems~\ref{thm:reducedWords} and \ref{thm:aMonster}, as well as Corollary~\ref{cor:ripsConstruction}.

\subsection{Further directions}
Here we collect some questions arising from the work in this paper.

\begin{enumerate}
    \item We show that random groups have $FW_n$ with overwhelming probability, which means $\Prob\{G\text{ has }FW_n\}\to 1$ as $L\to \infty$. For a fixed $n$ and density $d\in(0,\frac13)$, at what rate does the probability converge to $1$?
    \item What about other models of random groups, e.g., the few relators model? 
    \item Does there exists a $C'(1/6)$ group with $FW_n$ that acts properly cocompactly on an $(n+1)$-dimensional $\mathrm{CAT}(0)$ cube complex?
\end{enumerate}

This last question is related to Problem~4.3 and 4.4 in \cite{genevois:FWn}.


\medskip
\textbf{Acknowledgements.} Thank you, Piotr Przytycki, for posing the question of whether random groups have a cubical fixed-point property and for the many insightful conversations we had throughout my work on this problem. Your knowledge and support were invaluable. Thanks also to Dani Wise for sharing his deep understanding and enthusiasm for cubes. I am grateful to Harry Petyt for his encouragement---and for motivating me to get off the couch and complete this article. I thank the anonymous referees for their careful reading and edits, which greatly improved the clarity of this article.


\section{Preliminaries}

\subsection{Words, subwords, and the free group}
\label{sec:wordsSubwords}

Let $S=\{s_1,\ldots, s_k\}$ be a set of formal letters, $S^{-1}=\{s_1^{-1},\ldots, s_k^{-1}\}$ the set of formal inverses, and $S^\pm=S\sqcup S^{-1}$ their union. A \emph{word $w$ over $S^\pm$} is a finite sequence of elements from $S^\pm$. The elements of the sequence are the \emph{letters} of $w$. The \emph{length} of a word $w$ is the length of the sequence, denoted $|w|$. The \emph{empty word} is the unique word of length zero. Apart from the empty word, each word $w$ can be denoted $w=w(1)\cdots w(|w|)$ where $w(i)\in S^\pm$ is the $i$-th letter of $w$.  A \emph{subword} $w'$ of $w$ is a subsequence of $w$. That is, there exists $j\geq 0$ so that $w'(i)=w(j+i)$ for $i\in \{1,\ldots, |w'|\}$. The subword $w'$ is a \emph{prefix} if $j=0$, and it is a \emph{suffix} if $j=|w|-|w'|$. We say $w'$ is a \emph{proper subword} if $|w'|<|w|$. The empty word is a prefix and a suffix of every word. The set of prefixes and suffixes of $w$ are denoted $\pre(w)$ and $\suf(w)$. Thus the proper prefixes and suffixes are exactly $\pre(w)-\{w\}$ and $\suf(w)-\{w\}$. Given words $w$ and $u$, their \emph{concatenation} $wu$ is the word formed by concatenating sequences. More formally, $wu$ is the word of length $|w|+|u|$ defined by $wu(i)=w(i)$ for $i\in\{1,\ldots, |w|\}$ and $wu(i)=u(i-|w|)$ for $i\in\{|w|+1,\ldots, |w|+|u|\}$. A \emph{language} is a set of words, often denoted by $\mathcal L$. For a word $r$ and language $\mathcal L$, we define $r\mathcal L=\{rw :w\in \mathcal L\}$ and $\mathcal L r=\{wr: w\in \mathcal L\}$.

When dealing with the free group $F_S$ over $S$, we will sometimes view words over $S^\pm$ as elements of $F_S$. It should always be clear from context whether we are referring to a word just as a sequence of letters or as an element of $F_S$.

\subsection{$\mathrm{CAT}(0)$ cube complexes}

We review some standard definitions and results concerning $\mathrm{CAT}(0)$ cube complexes. For a more detailed introduction, see \cite{sageev:cat0CubeComplexes, wise:structureOfGroups}. 

An \emph{$n$-cube} is a copy of $[-1,1]^n$. A \emph{face} of a cube is a subspace defined by restricting some coordinates to $1$ or $-1$. A face naturally has the structure of a cube. For example, the $2$-cube $[-1,1]^2$ has four $1$-cube faces: $\{-1\}\times [-1,1]$, $\{1\}\times [-1,1]$, $[-1,1]\times \{-1\}$, and $[-1,1]\times \{1\}$. The faces of a cube give it a combinatorial cell structure where the closed $n$-cells are the $n$-faces. A \emph{cube complex} is constructed from a collection of cubes by identifying some faces via \emph{combinatorial isomorphisms}, i.e., isomorphisms sending $n$-cells homeomorphically to $n$-cells. The \emph{dimension} of a cube complex is the maximal dimension of a cube in the collection. For example, a $1$-dimensional cube complex is a graph. Similarly, a \emph{simplex complex} is a complex built from simplicies by identifying some faces via combinatorial isomorphisms. Simplicial complexes are exactly those simplex complexes whose simplicies are uniquely identified by their $0$-cells. A simplicial complex is \emph{flag} if each set of pairwise adjacent $0$-simplicies spans a simplex. 

A corner of an $n$-cube can be naturally associated to an $(n-1)$-simplex where the ends of $1$-cubes at the corner correspond to the 0-cells of the simplex. Under this correspondence, codimension-$k$ faces of the simplex are in bijection with the codimension-$k$ faces of the cube at the corner. Let $X$ be a cube complex. The \emph{link} $\lk(x)$ of a $0$-cube $x\in X^0$ is the simplex complex with an $(n-1)$-simplex for each corner of an $n$-cube containing $x$. Faces of simplicies are identified if the corresponding faces in the corners of cubes are identified. 

A cube complex $X$ is \emph{non-positively curved} if $\lk(x)$ is a flag simplicial complex for each $0$-cube $x\in X^0$. A \emph{CAT(0) cube complex} is a simply-connected non-positively curved cube complex. It is worth mentioning that $\mathrm{CAT}(0)$ is a potential property of any metric space. The condition on $\lk(x)$ for each $x\in X^0$ guarantees $X$ is $\mathrm{CAT}(0)$ when given the $\ell^2$-metric \cite{gromov:hyperbolicGroups,leary:metricKanThurstonTheorem}, i.e., the path metric on $X$ where each cube has been given the $\ell^2$-metric. 

A \emph{midcube} of a cube is the subspace defined by restricting one of the coordinates to 0, e.g., the $2$-cube has two midcubes, $\{0\}\times [-1,1]$ and $[-1,1]\times \{0\}$. Midcubes naturally have the structure of a cube. Let $\Upsilon(X)$ be the cube complex formed from copies of midcubes of $X$ glued along common midcubes. There is a natural immersion, i.e., a locally injective map, $\Upsilon(X)\looparrowright X$ given by the inclusion map on each midcube. A \emph{hyperplane} $H\looparrowright X$ is the restriction of $\Upsilon(X)\looparrowright X$ to a connected component $H$ of $\Upsilon(X)$. The inclusion of a midcube in a cube $m\hookrightarrow C$ can be extended to an isomorphism $m\times [-1,1]\to C$. Extending each inclusion of a midcube in this fashion defines an immersion $N(H)\looparrowright X$ called the \emph{carrier} of the hyperplane where $N(H)= H\times [-1,1]$.

The following theorem is standard in the study of $\mathrm{CAT}(0)$ cube complexes. For proofs see \cite[Theorem~1.1]{sageev:cat0CubeComplexes} or \cite[Lemma~2.14]{wise:structureOfGroups}.

\begin{theorem}
\label{thm:hyperplanes}
    Let $X$ be a $\mathrm{CAT}(0)$ cube complex, and let $H$ be a hyperplane of $X$.
        \begin{enumerate}
            \item The map $N(H)\looparrowright X$ is an embedding. 
            \item The image $N(H)\subset X$ is a convex subcomplex.
            \item $H$ and $N(H)$ are $\mathrm{CAT}(0)$ cube complexes.
            \item Every collection of pairwise intersecting hyperplanes has non-empty intersection.
        \end{enumerate}
\end{theorem}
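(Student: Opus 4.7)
The plan is to invoke the theory of minimal-area disc diagrams, which is the standard toolkit for analyzing hyperplanes in $\mathrm{CAT}(0)$ cube complexes. A disc diagram is a combinatorial map $D\to X$ from a disc-shaped square complex; since $X$ is simply connected, every combinatorial loop in $X$ bounds such a diagram, and one may choose it to have minimal area. Using the flag condition on links of $X$, the key technical lemma is that hyperplanes of a minimal disc diagram are arcs that meet the boundary transversely in exactly two points, with no self-intersection, no self-osculation, and no bigon with another hyperplane. Each such pathology would permit a square- or corner-reduction that strictly decreases area, giving the contradiction.

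For part (1), suppose $N(H)\looparrowright X$ fails to be an embedding. Then there exist two distinct edges of $N(H)$ dual to $H$ sharing an endpoint in $X$. Joining these edges by a dual path through $H$ produces a combinatorial loop in $X$ bounding a minimal disc diagram in which the corresponding dual arc self-osculates, contradicting minimality. For part (2), given $0$-cubes $x,y\in N(H)$ and a combinatorial geodesic $\gamma$ from $x$ to $y$, pair $\gamma$ with a path $\gamma'\subset N(H)$ to form a minimal disc diagram bounded by $\gamma\cup\gamma'$; an analysis of the dual hyperplane combinatorics forces $\gamma$ itself to lie in $N(H)$.

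For part (3), the link of any $0$-cube of $H$ (resp.\ $N(H)$) is a full subcomplex of its link in $X$, and full subcomplexes of flag simplicial complexes are flag, so $H$ and $N(H)$ are nonpositively curved. Simple connectivity of $N(H)$ follows from part (2), since convex subcomplexes of simply connected cube complexes are simply connected, and $H$ is a deformation retract of $N(H)=H\times[-1,1]$. For part (4), I would induct on the number $n$ of pairwise-intersecting hyperplanes. The inductive step applies the hypothesis to $H_2,\ldots,H_n$ to produce a $0$-cube $v$ at the corner of an $(n-1)$-cube whose midcubes lie in $H_2,\ldots,H_n$, giving an $(n-2)$-simplex in $\lk(v)$; the pairwise intersections of $H_1$ with each $H_i$ provide edges in $\lk(v)$ from the vertex of $H_1$ to each vertex of this simplex, and flagness extends the configuration to an $(n-1)$-simplex, corresponding to an $n$-cube at $v$ through which all the $H_i$ pass.

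The main obstacle is the disc diagram machinery underlying parts (1) and (2): one must carefully define minimality, and rigorously verify that each of the reduction moves (bigon removal, corner-square reduction, and the hexagon move eliminating self-osculations) strictly decreases area, with the NPC flag condition being used in exactly the right place. Once this machinery is in hand, parts (3) and (4) are fairly routine consequences of the flag link condition.
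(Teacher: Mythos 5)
The paper does not actually prove this theorem; it cites it as standard, pointing to Sageev \cite[Theorem~1.1]{sageev:cat0CubeComplexes} and Wise \cite[Lemma~2.14]{wise:structureOfGroups}. Your sketch follows the Wise-style disc-diagram route, which is indeed the argument behind one of the two cited references, so for parts (1)--(3) you are on the right track: the non-self-crossing/non-self-osculation lemma for minimal diagrams gives (1), the bigon-free/dual-curve analysis gives (2), and (3) really is a routine consequence of flagness being inherited by full subcomplexes together with convexity.

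Part (4), however, has a genuine gap as you have written it. After the inductive hypothesis produces a $0$-cube $v$ at the corner of an $(n-1)$-cube meeting $H_2,\ldots,H_n$, you claim that ``the pairwise intersections of $H_1$ with each $H_i$ provide edges in $\lk(v)$ from the vertex of $H_1$.'' But the hypothesis only says $H_1\cap H_i\neq\emptyset$ somewhere in $X$; it does not place $v$ in $N(H_1)$, nor does it say that the edges dual to $H_1$ and $H_i$ at $v$ (even if $v\in N(H_1)$) span a square. Two further inputs are needed and neither is free: (a) that the carriers $N(H_1),\ldots,N(H_n)$, being pairwise-intersecting convex subcomplexes, share a common $0$-cube (Helly for convex subcomplexes, usually proved via the median/gate structure or yet another disc-diagram argument), and (b) that two distinct crossing hyperplanes do not \emph{interosculate}, i.e.\ at any $0$-cube in both carriers, the two dual edges span a square. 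With (a) one gets a $0$-cube $x$ with dual edges $e_1,\ldots,e_n$, with (b) these edges pairwise span squares, and only then does flagness of $\lk(x)$ produce the $n$-cube. Labeling part (4) ``fairly routine'' undersells the fact that it relies on the convexity from (2) together with the no-interosculation lemma, neither of which appears in your sketch of the induction.
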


In light of Theorem~\ref{thm:hyperplanes}(1), we can view carriers $N(H)\looparrowright X$ as subspaces $N(H)\subset X$. Similarly, a hyperplane $H\looparrowright X$ can be viewed as a subspace $H\subset X$. Hyperplanes are already implicitly viewed as subspaces in the statement of Theorem~\ref{thm:hyperplanes}(4). Since a midcube is two-sided, i.e., a neighborhood of $m\subset C$ is homeomorphic to $m\times (-\epsilon,\epsilon)$, hyperplanes in a $\mathrm{CAT}(0)$ cube complex are two-sided. Thus the complemenent of a hyperplane $X-H$ has two components. The two components of $X-H$ are the \emph{halfspaces} of $H$. Additionally, if a $0$-cube $x$ lies in a cube intersecting $H$, then $x\in N(H)$ and there exists a unique $y\in N(H)$ such that $x$ and $y$ lie in different halfspaces of $H$ and are joined by a $1$-cube. This can be seen from the product structure $N(H)=H\times [-1,1]$, and we say $y$ is \emph{opposite to $x$ across $H$}. If $k$ hyperplanes pairwise intersect, then there exists a $k$-cube whose midcubes correspond to each of the $k$ hyperplanes. Thus, in an $n$-dimensional $\mathrm{CAT}(0)$ cube complex no more than $n$ hyperplanes can pairwise intersect. 

We will work mainly with the \emph{graph metric} on the $0$-cubes $X^0$, which we denote $(X^0,\dist)$. See Section~\ref{sec:graphs} below for background on graph theory. Treating the $1$-skeleton $X^1$ as a graph, $\dist(x,y)$ is the infimal length of a path $P\to X^1$ joining $x$ and $y$. A path $P\to X^1$ is \emph{geodesic} if it is a shortest length path joining its endpoints. We say a subcomplex $A\subset X$ is \emph{convex} if every geodesic joining $0$-cubes of $A$ lies in $A$. We define $\dist(A,B)= \inf_{a\in A^0, b\in B^0}\dist(a,b)$ for convex subcomplexes $A,B\subset X$. The graph metric is determined by the hyperplanes of $X$ in a way we now make precise. A hyperplane $H$ \emph{separates} subsets $A,B\subset X$ if $A$ and $B$ lie in distinct halfspaces of $H$. For subsets $A,B\subset X$, we let $\#(A,B)$ denote the number of hyperplanes separating $A$ and $B$. A hyperplane $H$ is \emph{closest} to $A\subset X$ if there does not exist a hyperplane separating $A$ and $H$. The following lemma follows from \cite[Corollary~2.15]{wise:structureOfGroups} and \cite[Lemma~2.18]{wise:structureOfGroups}.   

\begin{lemma}
\label{lem:hyperplanes}
    For convex subcomplexes $A,B\subset X$, $\dist(A,B)=\#(A,B)$. 
    
    In particular, $\dist(x,y)=\#(x,y)$ for $0$-cubes $x,y\in X$. And for a hyperplane $H$ and $0$-cube $x\in X^0$, we have $x\not\in N(H)$ if and only if there exists a hyperplane $H'$ separating $x$ and $N(H)$. 
\end{lemma}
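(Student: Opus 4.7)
The plan is to establish the equality first for $0$-cubes, then bootstrap to convex subcomplexes via a minimum-distance argument, and finally read off the carrier statement as a special case. The two-sidedness of hyperplanes (coming from Theorem~\ref{thm:hyperplanes}) will be the essential input.

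First I would show $\dist(x, y) = \#(x, y)$ for $0$-cubes. An $\ell^1$-geodesic between $x$ and $y$ is a shortest chain of $1$-cubes $e_1, \ldots, e_n$, and every $1$-cube is dual to a unique hyperplane via the product decomposition of its carrier. The key point is that a geodesic crosses each hyperplane at most once: if $e_i$ and $e_j$ were both dual to $H$, a standard disc diagram / dual curve surgery (using that hyperplanes are embedded two-sided convex subcomplexes) would produce a strictly shorter chain, contradicting geodicity. On the other hand, any chain from $x$ to $y$ must cross every hyperplane separating $x, y$ at least once, since $x$ and $y$ lie in opposite halfspaces. Combining these two observations gives the equality.

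Next I would extend the equality to convex subcomplexes $A, B$. Pick $a \in A$, $b \in B$ realizing $\dist(A, B)$. Clearly $\#(A, B) \leq \#(a, b)$, since any hyperplane separating $A, B$ separates $a, b$. For the reverse, I would argue that every hyperplane separating $a, b$ must also separate $A, B$. Suppose toward contradiction that some hyperplane $H$ separates $a, b$ but not $A, B$; without loss of generality there exists $a' \in A$ in the same halfspace as $b$. By convexity of $A$, connect $a$ to $a'$ by an $\ell^1$-geodesic inside $A$; this geodesic must cross $H$ at some $1$-cube, so it contains a $0$-cube $a'' \in A$ adjacent to $a$ through a $1$-cube dual to $H$. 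Then $\dist(a'', b) = \dist(a, b) - 1$ because $a''$ lies on $b$'s side of $H$ (using the $0$-cube equality from the first step), contradicting the minimality of $\dist(a, b)$. So $\#(a, b) = \#(A, B)$, and chaining with the $0$-cube case gives $\dist(A, B) = \dist(a, b) = \#(a, b) = \#(A, B)$.

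The final sentence of the lemma follows immediately by taking $A = \{x\}$ and $B = N(H)$, which is convex by Theorem~\ref{thm:hyperplanes}: $x \notin N(H)$ iff $\dist(x, N(H)) > 0$ iff $\#(x, N(H)) > 0$, i.e., some hyperplane separates $x$ from $N(H)$. The main obstacle is really the geodesic surgery step establishing that each hyperplane is crossed at most once; everything else is bookkeeping on top of that, together with the minimum-distance swap argument.
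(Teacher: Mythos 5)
The paper does not prove this lemma directly; it cites \cite[Corollary~2.15]{wise:structureOfGroups} and \cite[Lemma~2.18]{wise:structureOfGroups}. Your attempt to give a self-contained argument is reasonable in spirit, but the second step has a genuine gap.

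In extending the equality from $0$-cubes to convex subcomplexes, you assume that once $H$ separates $a$ from $a'$, the geodesic from $a$ to $a'$ inside $A$ yields a $0$-cube $a''\in A$ \emph{adjacent to $a$} across a $1$-cube dual to $H$. That is only possible when $a\in N(H)$, equivalently when no hyperplane separates $a$ from $H$, and nothing in your setup guarantees this: the hyperplane $H$ you are contradicting was chosen only by the condition that it separates $a,b$ but fails to separate $A,B$, and the geodesic from $a$ to $a'$ may well cross several other hyperplanes before it reaches $H$. If you instead take $a''$ to be the $0$-cube just after the geodesic crosses $H$ (which you can do), then $a''\in A$ and $a''$ is on $b$'s side of $H$, but the claim $\dist(a'',b)=\dist(a,b)-1$ no longer follows: hyperplanes crossed between $a$ and $a''$ may separate $a''$ from $b$ even though they do not separate $a$ from $b$, and $\dist(a'',b)$ can exceed $\dist(a,b)$. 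So the contradiction with minimality is not reached.

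The cleanest repair uses the median structure rather than a naive walk: set $m=\mu(a,a',b)$. Since $m$ lies on a geodesic from $a$ to $a'$ and $A$ is convex, $m\in A$; since $m$ lies on a geodesic from $a$ to $b$, we have $\dist(a,m)+\dist(m,b)=\dist(a,b)$; and since $a',b$ lie on the same side of $H$ while $a$ does not, the majority rule puts $m$ on $b$'s side of $H$, so $m\neq a$ and $\dist(m,b)<\dist(a,b)$, contradicting the minimality of $\dist(a,b)$. Alternatively one can quote the gate/closest-point-projection machinery as in Wise's Corollary~2.15. Your first step ($0$-cube case) and the final ``in particular'' deduction about carriers are both fine once the second step is fixed.
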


A consequence of Lemma~\ref{lem:hyperplanes} is that for a convex subcomplex $A\subset X$, a hyperplane $H$ is closest to $A$ if and only if $A\cap N(H)\neq \emptyset$. We conclude this section with some definitions regarding the arrangement of hyperplanes. Hyperplanes $H$, $H'$ of $X$ \emph{cross} if $H\neq H'$ and $H\cap H'\neq \emptyset$. This is denoted $H\pitchfork H'$. Two hyperplanes $H$, $H'$ are \emph{parallel} if $H\cap H'=\emptyset$. This is denoted $H\parallel H'$. If $H$ and $H'$ are non-crossing, it is still possible that $H\cap H'\neq \emptyset$. This happens exactly when $H=H'$.

\section{Progressing Automata} 

In this section we introduce progressing automata and establish their key properties. We also define checkpoint trees and show how they can be used to construct progressing automata. We begin by reviewing definitions and terminology in graph theory.

\subsection{Graphs, paths, and trees}
\label{sec:graphs}

We review some standard graph theory terminology that will be used throughout the paper. A \emph{graph} $\Sigma$ is a 1-complex whose 0-cells we call \emph{vertices} and whose 1-cells we call \emph{edges}. We sometimes write $\Sigma=(V,E)$, where $V$ denotes the vertex set, and $E$ denotes the edge set. Morphisms of graphs are combinatorial maps, i.e., they send vertices to vertices and edges to edges. A \emph{directed edge} is an edge with a fixed orientation, and a \emph{directed graph} is a graph whose edges are all directed. An orientation can be represented by an arrow on the edge, allowing us to speak of the edge being directed \emph{towards} or \emph{away} from an endpoint. If $v$ is a vertex of a directed graph, then the \emph{outgoing edges at $v$} are the edges with an endpoint at $v$ that are directed away from this endpoint. The \emph{incoming edges at $v$} are the edges with an endpoint at $v$ that are directed towards this endpoint. A directed edge with both endpoints equal to $v$ is both an outgoing and incoming edge at $v$. 
    
A \emph{path in $\Sigma$} is a map $P\to \Sigma$ where $P$ is a graph identified with $[0,n]$, where $[0,n]$ has a $0$-cell at each integer point. A path is \emph{trivial} if $n=0$. The edges of a non-trivial path $P\to \Sigma$ are naturally ordered, and there exists an edge decomposition $P=e_1\cdots e_n$ where $e_i=[i-1,i]$. The edges of $P$ are also naturally directed, with each edge directed towards its larger endpoint. The \emph{initial vertex} of $P$ is the image of $0$ in $P\to \Sigma$, and the \emph{terminal vertex} is the image of $n$. If $\Sigma$ is directed, then a path $P\to \Sigma$ is \emph{directed} if it respects orientations of edges. 
    
A \emph{vertex/edge labeling over a set $\mathcal A$} associates to each vertex/edge of a graph an element of $\mathcal A$. Formally, a labeling over $\mathcal A$ is a map from the set of vertices/edges to $\mathcal A$. We also allow labelings on subsets of vertices and edges. An edge labeling of a directed graph is \emph{deterministic} if the outgoing edges at each vertex have distinct labels. Our label set will most often be $S^\pm$. An edge labeling of $\Sigma$ pulls back to an edge labeling of any path $P\to \Sigma$. If the edges $e_1,\ldots, e_n$ of $P$ are labeled by letters $w(1),\ldots, w(n)$ in $S^\pm$, then the \emph{label of $P$} is the word $w=w(1)\cdots w(n)$.
    
A \emph{tree} is a graph $\Sigma$ with $\pi_1\Sigma$ trivial. A \emph{leaf} of a tree is a vertex which is the endpoint of a single edge. A \emph{rooted tree} $\Sigma$ is a tree with a distinguished vertex, the \emph{root}. The \emph{depth} of a vertex $v\in \Sigma$ is its distance from the root, and the \emph{depth} of $\Sigma$ is the maximal depth across all vertices $v\in \Sigma$. The root has depth zero. A path $P\to \Sigma$ is \emph{rooted} if the initial vertex of $P$ is the root of $\Sigma$. As a convention, we orient edges of a rooted tree away from the root. That is, each edge is directed towards its endpoint farthest from the root. Let $\Sigma$ be a rooted tree. A vertex $v$ is a \emph{descendant} of $u$ if there exists a non-trivial directed path $P\to \Sigma$ with initial vertex $u$ and terminal vertex $v$. If $P$ has length one, then $v$ is a \emph{child} of $u$. If $v$ is a descendant of $u$, then $u$ is an \emph{ancestor} of $v$.

\subsection{Progressing automata and checkpoint trees} 

We now move towards defining progressing automata.


\begin{definition}[Finite state automaton]
    A \emph{finite state automaton over $S^\pm$}, or simply an \emph{automaton over $S^\pm$}, is a finite directed graph $\Sigma$ with an edge labeling over $S^\pm$  and a distinguished \emph{start vertex} $\s$. A word $w$ is \emph{accepted} by $\Sigma$ if it is the label of a directed path $P\to \Sigma$ beginning at $\s$. The \emph{accepted language} $\L_\Sigma$ is the set of all accepted words. 
\end{definition}

In general, the accepted language $\L_\Sigma$ of an automaton $\Sigma$ may not contain many words. The next definition is a property of $\Sigma$ which guarantees that the accepted language $\L_\Sigma$ is ``dense''. 

\begin{definition}[$\lambda$-large automaton]
\label{def:largeGrowth}
    Let $\lambda\in [0,1]$. A vertex $v$ of a directed graph deterministically labeled over $S^\pm$ has \emph{$\lambda$-large growth} if there are at least $\lambda|S^\pm|$ outgoing edges at $v$. An automaton has \emph{$\lambda$-large growth} if there exists some $K>0$ so that each directed path $P\to \Sigma$ beginning at $\s$ and of length at least $K$ terminates on a vertex with $\lambda$-large growth.
\end{definition}
    
\begin{definition}[Visible hyperplanes]
\label{def:visibleHyperplanes}
    Fix an action of $F_S$ on a $\mathrm{CAT}(0)$ cube complex $X$, and fix a \emph{basepoint} $0$-cube $x\in X^0$. For $s\in S^\pm$, let $\H_s$ be the set of hyperplanes separating $x$, $sx$ closest to $x$. The \emph{visible} hyperplanes are $\H=\cup_{s\in S^\pm}\H_s$. 
\end{definition}

By Lemma~\ref{lem:hyperplanes}, $\H_s$ can equivalently be defined as the set of hyperplanes $H$ separating $x$, $sx$ such that $x\in N(H)$. Note that the set of visible hyperplanes depends on the action of $F_S$ on $X$ and the choice of basepoint. Also note that $\mathcal H$ is finite even if $X$ is infinite-dimensional or locally infinite. This is because finitely many hyperplanes separate $x$, $sx$ for each $s\in S^\pm$, and $S^\pm$ is finite.

\begin{figure}[h]
    \centering
    \begin{subfigure}[h]{0.3\textwidth}
        \centering
        \includegraphics[width=\textwidth]{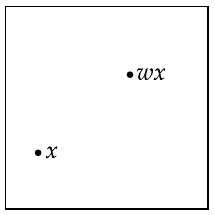}
        \caption{$P$ meets no checkpoint vertices.}	
        \label{}
    \end{subfigure}
	\ \ \ \ \ 
    \begin{subfigure}[h]{0.3\textwidth}
        \centering	
	\includegraphics[width=\textwidth]{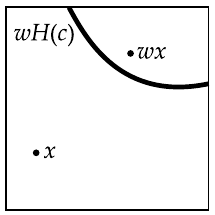}
        \caption{$P$ terminates on a checkpoint vertex $c$. }	
        \label{}
    \end{subfigure}
    \caption{$P\to \Sigma$ begins at $\s$.}
    \label{fig:prog1}
\end{figure}

\begin{figure}[h]
    \centering
    \begin{subfigure}[h]{0.3\textwidth}
        \centering
        \includegraphics[width=\textwidth]{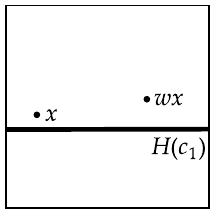}
        \caption{$P$ meets no other checkpoint vertices.}	
        \label{}
    \end{subfigure}
	\ \ \ \ \ 
    \begin{subfigure}[h]{0.3\textwidth}
        \centering	
	\includegraphics[width=\textwidth]{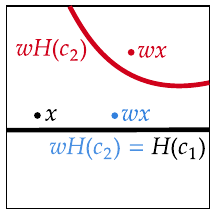}
        \caption{$P$ terminates on a checkpoint vertex $c_2$.}	
        \label{}
    \end{subfigure}
    \caption{$P\to \Sigma$ begins at a checkpoint vertex $c_1$. The red and blue in (B) denote two possible cases.}
    \label{fig:prog2}
\end{figure}

\begin{definition}[Progressing automaton]   
\label{def:progressingAutomaton}
    Adopt the set-up in Definition~\ref{def:visibleHyperplanes}. An automaton $\Sigma=(V,E)$ is \emph{progressing} if there exists a non-empty set of \emph{checkpoint} vertices $C\subset V-\{\s\}$ and a labeling $H:C\to \H$ over visible hyperplanes such that the following hold: 
    
    \begin{enumerate}
        \item  Let $P\to \Sigma$ be a directed path beginning at $\s$ with label $w$ such that either $P$ does not meet $C$, or $P$ meets $C$ only at its terminal vertex. Then $w x\neq x$. If the terminal vertex of $P$ is $c\in C$, then $w H(c)$ separates $x$ and $w x$. See Figure~\ref{fig:prog1}.
        \item Let $P\to \Sigma$ be a directed path with label $w$ such that either $P$ meets $C$ only at its initial vertex, or $P$ meets $C$ only at its initial and terminal vertices. Let $c_1\in C$ be the initial vertex of $P$. The points $x$ and $w x$ lie in the same halfspace of $H(c_1)$. If the terminal vertex of $P$ is $c_2\in C$, then either $w H(c_2)=H(c_1)$, or $w H(c_2)$ is parallel to $H(c_1)$ and separates $x$ and $w x$. See Figure~\ref{fig:prog2}.
    \end{enumerate}
\end{definition}

Note that implicit in the structure of a progressing automaton are a $\mathrm{CAT}(0)$ cube complex $X$, an action of $F_S$ on $X$, and a basepoint $x\in X^0$.

\begin{lemma}
\label{lem:checkpoints}
    Let $\Sigma$ be a progressing automaton, and let $x\in X^0$ be the basepoint. For each $w\in \L_\Sigma$, we have $wx\neq x$. That is, if $P\to \Sigma$ is a directed path beginning at $\s$ with label $w$, then $wx\neq x$. Moreover, if $c\in C$ is the last checkpoint vertex met by $P\to \Sigma$ and $w'$ labels the subpath from $\s$ to $c$, then $w'H(c)$ separates $x$ and $wx$. 
\end{lemma}

\begin{proof}
    We prove the lemma by induction on the number of checkpiont vertices met by $P$. If $P$ meets no checkpoint vertices or has only its terminal vertex in $C$, then the lemma follows from Definition~\ref{def:progressingAutomaton}(1). Now suppose $P$ meets $k\geq 1$ checkpoint vertex, and suppose the lemma holds for all paths meeting fewer checkpoint vertices. Let $c\in C$ be the last checkpoint vertex met by $P$. We consider two separate cases. 
    
    First suppose $c$ is the terminal vertex of $P$. If $c$ is the only checkpoint vertex, then we are in the base case discussed above, so suppose there exists a checkpoint vertex $c_1$ that $P$ meets just before $c_2=c$. Let $P=P_1P_2$ be the decomposition such that $P_1$ terminates on $c_1$ and $P_2$ begins on $c_1$ and terminates on $c_2$. Let $w=w_1w_2$ be the corresponding decomposition of the label. By the inductive hypothesis, $w_1H(c_1)$ separates $x$ and $w_1x$. By the first part of Definition~\ref{def:progressingAutomaton}(2), $x$ and $w_2x$ lie in the same halfspace of $H(c_1)$. Multiplying by $w_1$, we get that $w_1x$, $wx$ lie in the same halfspace of $w_1H(c_1)$. Thus $w_1H(c_1)$ separates $x$, $wx$. This already establishes $wx\neq x$. By the second part of Definition~\ref{def:progressingAutomaton}(2) either $w_2H(c_2)=H(c_1)$, or $w_2H(c_2)$ is parallel to $H(c_1)$ and separates $x$, $w_2x$. If $w_2H(c_2)=H(c_1)$, then multiplying by $w_1$ shows $wH(c_2)=w_1H(c_1)$, and we are done. If $w_2H(c_2)$ is parallel to $H(c_1)$ and separates $x$, $w_2x$, then multiplying by $w_1$ shows $wH(c_2)$ is parallel to $w_1H(c_1)$ and separates $w_1x$, $wx$. Since $w_1x$, $wx$ lie in the same halfspace of $w_1H(c_1)$, we get that $wH(c_2)$ must separate $wx$ and $x$. 

    Now suppose $c$ is not the terminal vertex of $P$. Let $P=P_1P_2$ be the decomposition such that $P_1$ terminates on $c$ and $P_2$ begins on $c$. Let $w=w_1w_2$ be the corresponding decomposition of labels. By the inductive hypothesis, $w_1H(c)$ separates $w_1x$ and $x$. By Definition~\ref{def:progressingAutomaton}(2), the points $x$, $w_2x$ lie in the same halfspace of $H(c)$. Multiplying by $w_1$, we get that $w_1x$, $wx$ lie in the same halfspace of $w_1H(c)$. Thus $w_1H(c)$ separates $x$ and $wx$. 
\end{proof}

Lemma~\ref{lem:checkpoints} justifies the names ``progressing automaton'' and ``checkpoint vertex''. Let $\Sigma$ be a progressing automaton, and let $x\in X^0$ be the basepoint. Let $P=e_1\dots e_n$ be an edge decomposition of a directed path beginning at $\s$, and let $w(1)\cdots w(n)$ be its label. Imagine tracing out $P$ edge-by-edge, considering subpaths $e_1\cdots e_i$ with $i$ iterated from $1$ to $n$. Simultaneously, consider the sequence of points $w(1)\cdots w(i)x$ in $X$. Lemma~\ref{lem:checkpoints} says the sequence of points $w(1)\cdots w(i)x$ never returns to $x$. But moreover, Lemma~\ref{lem:checkpoints} says each time a subpath $e_1\cdots e_i$ meets a checkpoint vertex $c\in C$, we get a hyperplane $w(1)\cdots w(i)H(c)$ which witnesses the separation of $w(1)\cdots w(i)x$ and $x$. This witness lasts until we arrive at the next ``checkpoint'', where we get a new witness of separation. The checkpoints mark the progression of the points $w(1)\cdots w(i)x$ away from $x$.

We now develop some tools useful for constructing progressing automata.

\begin{definition}[Progressing pair]
\label{def:progressingPair}
    Fix an action of $F_S$ on a $\mathrm{CAT}(0)$ cube complex $X$, and let $x\in X^0$ be a basepoint. Let $\s$ be a letter not in $S^\pm$. Let $\H$ be the set of visible hyperplanes, and let $w$ be a word over $S^\pm$.  

    \begin{itemize}
        \item A pair $(w,\s)$ is \emph{progressing} if $w'x\neq x$ for each $w'\in\pre(w)$, and $w H'$ separates $x$ and $w x$ for some $H'\in \H$.
        \item A pair $(w,H)$ with $H\in \H$ is \emph{progressing} if for each $w'\in\pre(w)$ the following two conditions hold:
        \begin{enumerate}
            \item $H$ does not separate $x$, $w' x$ for each $w'\in\pre(w)$.
            \item For some $H'\in \H$ either $w H'=H$, or $w H'$ is parallel to $H$ and separates $x$, $w x$.
        \end{enumerate}
    \end{itemize}
    
    In each of the above cases, we say $H'$ \emph{witnesses} the progression of $(w,\s)$ or $(w,H)$. 
\end{definition} 

\begin{definition}[Checkpoint tree]
\label{def:checkpointTree}
    Let $H\in \H\cup\{\s\}$. A \emph{checkpoint tree at $H$}, denoted $T_H$, is a finite rooted tree deterministically labeled over $S^\pm$ such that the following hold:
    
    \begin{enumerate}
        \item The root of $T_H$ is labeled $H$.
        \item The leaves of $T_H$ are labeled by elements of $\H$.
        \item If $P\to T_H$ is a rooted path labeled $w$ terminating on a leaf labeled $H'$, then $(w,H)$ is a progressing pair witnessed by $H'$.
    \end{enumerate}
\end{definition}

Checkpoint trees are our building blocks used to construct progressing automata. We informally describe the idea behind this construction. Imagine a checkpoint tree as a subgraph of a progressing automaton $T_H\subset \Sigma$ where $H\in \H$. As above, consider the process of walking along a path $P\to \Sigma$ edge-by-edge. When $P$ first meets the root of $T_H$, the hyperplane $w'H$ witnesses the separation of $w'x$, $x$, where $w'$ is the label of $P$ up to meeting the root of $T_H$. As $P$ travels along $T_H$, the hyperplane $w'H$ continues to be the witness of separation until $P$ reaches a leaf of $T_H$. Once $P$ reaches a leaf, a translate of $H'\in\H$ will be the new witness, where $H'$ is the label of the leaf.


\begin{definition}[$\lambda$-large tree]
    By definition, checkpoint trees $T_\s$ (i.e. with root labeled $\s$) have \emph{$\lambda$-large growth} for all $\lambda\in [0,1]$. For $H\in \H$, a checkpoint tree $T_H$ has \emph{$\lambda$-large growth} every non-leaf vertex has $\lambda$-large growth. 
\end{definition} 

The definition of $\lambda$-large growth for checkpoint trees is designed so that the automaton we construct from the trees will have $\lambda$-large growth. We now define that construction.

\begin{definition}[Realized automaton]
\label{def:realizedAutomaton}
    The \emph{realized automaton} of a set of checkpoint trees $\{T_H\}_{H\in\H\cup\{\s\}}$ is the automaton whose underlying graph is the union of the $T_H$ with vertices of the same label identified. The vertex $\s$ is the start vertex. 
\end{definition}

\begin{remark}
\label{rem:realizedAutomataDeterministic}
    The only vertices which are identified in Definition~\ref{def:realizedAutomaton} are the leaves and roots of the checkpoint trees $T_H$ with $H\in \H\cup \{\s\}$. Each root has a unique label in $\mathcal{H}\cup \{\s\}$, so no two roots are identified. Thus automata realized from checkpoint trees are deterministically labeled. 
\end{remark}

The following lemma is our main tool used to construct progressing automata. 

\begin{lemma}
\label{lem:realization}
    Suppose $\{T_H\}_{H\in\H\cup\{\s\}}$ is a collection of checkpoint trees with $\lambda$-large growth. Then the realized automaton $\Sigma$ is progressing and has $\lambda$-large growth.
\end{lemma}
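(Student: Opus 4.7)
\emph{Plan.} I take the start vertex of $\Sigma$ to be the root of $T_\s$, declare the checkpoints $C$ to be the vertices of $\Sigma$ carrying a label in $\H$ (equivalently, the images of the roots of the trees $T_H$ with $H \in \H$), and let $H : C \to \H$ be this label; I then verify Definition~\ref{def:progressing} and Definition~\ref{def:largeGrowth} separately.

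The structural observation that drives the argument is that the only vertices shared between two distinct trees $T_H, T_{H'}$ in the collection are checkpoints: the root of $T_\s$ carries the unique label $\s$ and interior vertices of any $T_H$ carry no label, so no further identifications take place. Since each tree is oriented away from its root, any directed path in $\Sigma$ that meets $C$ only at its endpoints must lie entirely inside a single tree $T_H$, starting at its root. Its label is then a prefix of one of the words $w_i$ in the collection of progressing pairs $\{(w_i, H)\}_{i \in I_H}$ realizing $T_H$, and, if the path terminates at a checkpoint, that terminal vertex is labeled by the corresponding witness $H_i$. Given this, both clauses of Definition~\ref{def:progressing} are direct translations of Definition~\ref{def:progressingPair} applied to the relevant progressing pair: clause~(1) uses the condition for $(w_i, \s)$ to obtain $\hat{w'}x \neq x$ for every prefix and the witness property at the terminal checkpoint, while clause~(2) uses the condition for $(w_i, H(c_1))$ to obtain that $H(c_1)$ does not separate $x$ from any $\hat{w'}x$, together with the equality-or-parallelism assertion at the terminal checkpoint.

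For $\lambda$-large growth I locate the possibly deficient vertices: every non-leaf vertex of $T_H$ with $H \in \H$ satisfies the bound by hypothesis, and every leaf of any tree is identified with the root of some $T_{H'}$ with $H' \in \H$, which as a non-leaf of $T_{H'}$ inherits the bound. The only remaining vertices are $\s$ itself and the interior (non-root, non-leaf) vertices of $T_\s$. Since $T_\s$ attaches to the rest of $\Sigma$ only through its leaves, and its edges are directed away from its root, any directed path ending at one of these vertices lies entirely inside $T_\s$ and hence has length bounded by the depth of $T_\s$; taking $K$ strictly larger than this depth suffices. The main step requiring care is this structural claim about what can reach an interior vertex of $T_\s$; once it is in hand, the rest of the proof is essentially bookkeeping between the two notions of ``progressing''.
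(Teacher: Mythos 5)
Your proof is correct and takes essentially the same route as the paper's: identify the checkpoints with the labeled vertices of the realized automaton, observe that directed paths meeting $C$ only at their endpoints are confined to a single checkpoint tree (with initial vertex its root), and transfer the conditions on progressing pairs directly to the clauses of Definition~\ref{def:progressing}, then handle growth by locating the deficient vertices inside $T_\s$. Your growth argument is in fact slightly more careful than the paper's (which phrases the bound only for paths beginning at $\s$, whereas the definition of $\lambda$-large growth quantifies over all directed paths of length at least $K$), since you explicitly note that $\s$ has no incoming edges and $T_\s$ can only be entered at its root, so any path terminating in the interior of $T_\s$ has length bounded by the depth of $T_\s$.
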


\begin{proof}
    We view the checkpoint trees as subgraphs of $\Sigma$. Note that as subgraphs they may not be trees, since a leaf can be identified to the root of the same tree. Let the checkpoint vertices $C$ of $\Sigma$ be the roots and leaves of the $T_H$ for $H\in \H$. The labeling $H:C\to \H$ is induced by the labelings of the $T_H$.

    We first check Definition~\ref{def:progressingAutomaton}(1). Let $P\to \Sigma$ be a directed path beginning at $\s$ with label $w$ such that either $P$ does not meet $C$, or $P$ meets $C$ only at its terminal vertex. In both cases, $P$ is contained in $T_\s$. Let $P'\to T_\s$ be a (potentially trivial) extension of $P$ terminating on a leaf of $T_\s$. Let $H'$ be the label of this leaf, and let $w'$ be the label of $P'$. Note that $w\in\pre(w')$. By the definition of checkpoint tree, the pair $(w',\s)$ is progressing. Thus $w''x\neq x$ for each $w''\in \pre(w')$, and in particular $w''x\neq x$ for each $w''\in \pre(w)$, as desired. Additionally, by Definition~\ref{def:checkpointTree}(3), $H'$ witnesses the progression of $(w',\s)$. That is, $w'H'$ separates $x$ and $w'x$. If $P$ terminates on a checkpoint vertex, then $P=P'$ and $w=w'$, so we satisfy Definition~\ref{def:progressingAutomaton}(1).

    We now check Definition~\ref{def:progressingAutomaton}(2). Let $P\to \Sigma$ be a directed path with label $w$ such that either $P$ meets $C$ only at its initial vertex, or $P$ meets $C$ only at its initial and terminal vertices. In both cases, $P$ is contained in a tree $T_H$ for some $H\in \H$. Similarly to above, let $P'\to T_H$ be a (potentially trivial) extension of $P$ terminating on a leaf of $T_H$. Let $H'$ be the label of this leaf, and let $w'$ be the label of $P'$. Since $T_H$ is a checkpoint tree, the pair $(w',H)$ is progressing. Thus $x$ and $w''x$ lie in the same halfspace of $H$ for each $w''\in\pre(w')$. In particular, this also holds for $w''\in \pre(w)$. This shows Definition~\ref{def:progressingAutomaton}(2) holds if $P$ meets $C$ only at its initial vertex. By Definition~\ref{def:checkpointTree}(3), $H'$ witnesses the progression of $(w,H)$, so either $wH'=H$, or $w'H'$ is parallel to $H$ and separates $x$, $w'x$. When $P$ meets $C$ at its initial and terminal vertices, then $P=P'$ and $w=w'$, so these last separations and equalities show Definition~\ref{def:progressingAutomaton}(2) also holds in this case.

    We now show $\Sigma$ has $\lambda$-large growth. Let $K$ be the depth of $T_\s$. No vertex of a checkpoint tree is labeled $\s$ apart from the root of $T_\s$. Thus if a path $P\to \Sigma$ leaves $T_\s$, then its terminal vertex must lie in $T_H$ for some $H\in \H$. In particular, any directed path $P\to \Sigma$ beginning at $\s$ and of length at least $K$ terminates in $T_H$ for some $H\in \H$. If $P$ terminates on a non-leaf vertex of $T_H$, then that vertex has $\lambda$-large growth since $T_H$ has $\lambda$-large growth. Suppose $P$ terminates on a leaf of $T_H$, and let $H'$ be the label of the leaf. The leaf is identified with the root of $T_{H'}$ in $\Sigma$ which has $\lambda$-large growth since $T_{H'}$ has $\lambda$-large growth. Thus any $P\to \Sigma$ beginning at $\s$ and of length at least $K$ must terminate on a vertex with $\lambda$-large growth.
\end{proof}

One of our goals is to construct automata $\Sigma$ so that $wx\neq x$ for all $w\in \L_\Sigma$. We end this section by dealing with an easy case not requiring the machinery of progressing automata or checkpoint trees.

\begin{definition}[Fix-set $\Fix(x)$]
    Fix an action of $F_S$ on $X$, and let $x\in X^0$ be a basepoint. The \emph{fix-set} $\Fix(x)\subset S^\pm$ are those $s$ such that $sx=x$.  
\end{definition}

The notion of a progressing automaton will be most useful when $\Fix(x)$ is a small subset of $S^\pm$. If $\Fix(x)$ is large, then the following lemma allows us to construct automata with good growth whose accepted language acts non-trivially on $x$. 

\begin{lemma}
\label{lem:generatorsWithFixedPoint}
    Fix an action of $F_S$ on a $\mathrm{CAT}(0)$ cube complex $X$ without a global fixed-point, and let $x\in X^0$ be a basepoint. If $|\Fix(x)|\geq \lambda|S^\pm|$, then there exists an automaton $\Sigma$ with two vertices and $\lambda$-large growth such that $ wx\neq x$ for each $w\in \L_\Sigma$.
\end{lemma}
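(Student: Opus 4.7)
The plan is to write down a very simple two-vertex automaton built directly from $\Fix(x)$. First, I would observe that since the action of $F_S$ on $X$ has no global fixed point, not every generator can fix $x$: if $S^\pm\subset \Fix(x)$, then every element of $F_S$ would fix $x$, contradicting the hypothesis. So I can pick some $t\in S^\pm\setminus \Fix(x)$, i.e., a generator with $\hat t x\neq x$.

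Next, I would construct $\Sigma$ with vertex set $\{\s,v\}$, a single edge from $\s$ to $v$ labeled $t$, and a self-loop at $v$ for each element of $\Fix(x)$. Any nontrivial directed path beginning at $\s$ first traverses the $t$-edge and then loops at $v$, so its label has the form $w=t\,s_1\cdots s_n$ with each $s_i\in \Fix(x)$.

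For such an accepted word, since each $\hat{s_i}$ fixes $x$, a straightforward induction gives $\widehat{s_1\cdots s_n}\,x=x$, and therefore $\hat w x=\hat t x\neq x$, as required. For the growth condition, the vertex $v$ has $|\Fix(x)|\geq \lambda|S^\pm|$ outgoing edges and hence has $\lambda$-large growth, while $\s$ has only one outgoing edge. Since every directed path from $\s$ of length at least $K=1$ terminates at $v$, the automaton $\Sigma$ itself has $\lambda$-large growth by Definition~\ref{def:largeGrowth}.

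I do not anticipate any real obstacle here; this is genuinely an ad hoc construction in which all the structure comes from the hypothesis that $\Fix(x)$ is large. The only mild subtlety is that the empty word is the label of the trivial path at $\s$ and does not move $x$, but in this paper's applications $\L_\Sigma$ is used to exclude relators of length $L\to\infty$, so the empty word plays no role and the construction above suffices.
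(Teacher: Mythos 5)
Your proof is correct and is essentially identical to the paper's: both construct the same two-vertex automaton with a single edge labeled by a non-fixing generator followed by self-loops labeled by $\Fix(x)$, and argue growth and non-triviality in the same way. The only difference is that you spell out why a non-fixing generator exists (a one-line observation the paper leaves implicit) and flag the empty-word edge case, which the paper also glosses over.
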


\begin{proof}
    By assumption, there exists some $s\in S^\pm$ such that $sx\neq x$. Let $\{\s,v\}$ be the vertex set of $\Sigma$. Add two directed edges from $\s$ to $v$ with labeled $s$ and $s^{-1}$. For each $s'\in\Fix(x)$ add a directed loop at $v$ labeled $s'$. Any directed path $P\to \Sigma$ beginning at $\s$ and of length at least $1$ terminates on $v$, which has $|\Fix(x)|\geq \lambda|S^\pm|$ outgoing edges. Thus $\Sigma$ has $\lambda$-large growth. By construction, any word $w\in \L_\Sigma$ is of the form $w=sw'$ where $s$ is as above and $w'$ is a word over $\Fix(x)$. Thus we have $ w x= s{w}'x= sx\neq x$.
\end{proof}

\section{Partitioning $\H$ and Progressing}

In this section, we concern ourselves with constructing checkpoint trees $T_H$ for each $H\in \H$. This requires a careful study of the arrangement of visible hyperplanes and their translates by $F_S$, along with the translates of a basepoint $0$-cube.   

\begin{definition}[Forward $\A$, backward $\B$, parallel $\P$]
\label{def:ABP}
    Fix an action of $F_S$ on a $\mathrm{CAT}(0)$ cube complex, and let $x\in X^0$ be a basepoint. Let $w$ be a word over $S^\pm$ and $H\in \H$ a visible hyperplane such that $x$ and $w x$ are both points of $N(H)$ in the same halfspace of $H$. We define a partition of $S^\pm$ relative to $H$ and $w$ as follows:

    \begin{itemize}
        \item The \emph{forward} elements $\A_w(H)$ are those $s\in S^\pm$ such that $wx$ and $ws x$ lie in the same halfspace of $H$ and $\dist(ws x,N(H))>0$.
        \item The \emph{backward} elements $\B_w(H)$ are those $s\in S^\pm$ such that $w x$ and $ws x$ lie in different halfspaces of $H$. 
        \item The \emph{parallel} elements $\P_w(H)$ are those $s\in S^\pm$ such that $w x$ and $ws x$ are both points of $N(H)$ in the same halfspace of $H$.
    \end{itemize}

    See Figure~\ref{fig:forwardBackward} for depictions of how elements in $\A_w(H)$ and $\B_w(H)$ behave. The elements of $\P_w(H)$ further partition into three sets:
    
    \begin{itemize}
        \item The \emph{visibly parallel} elements $\Psame_w(H)$ are those $s\in\P_w(H)$ such that $(ws)^{-1}H\in\H$.
        \item The \emph{crossing parallel} elements $\Pcross_w(H)$ are the non-visibly parallel $s\in\P_w(H)$ such that $ws H\pitchfork H$. 
        \item The \emph{disjoint parallel} elements $\Ppar_w(H)$ are the non-visibly parallel $s\in \P_w(H)$ such that $ws H\parallel~H$.
    \end{itemize}

    See Figure~\ref{fig:parallel} for depictions of how elements in $\Psame_w(H)$, $\Pcross_w(H)$, and $\Ppar_w(H)$ behave.
\end{definition}    

\begin{figure}[h]
    \centering
    \begin{subfigure}[h]{0.3\textwidth}
        \centering
        \includegraphics[width=\textwidth]{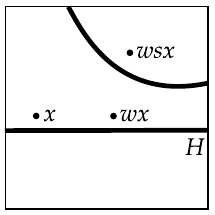}
        \caption{$s$ in $\A_w(H)$.}	
        \label{}
    \end{subfigure}
	\ \ \ \ \
    \begin{subfigure}[h]{0.3\textwidth}
        \centering	
	\includegraphics[width=\textwidth]{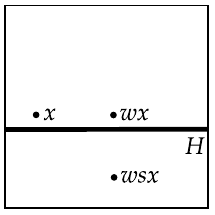}
        \caption{$s$ in $\B_w(H)$.}	
        \label{}
    \end{subfigure}
    \caption{Translates of $x$ by $\A_w(H)$ and $\B_w(H)$.}
    \label{fig:forwardBackward}
\end{figure}
    
When $w$ is the empty word, we will write $\A(H)$, $\B(H)$, and $\P(H)$ in place of $\A_w(H)$, $\B_w(H)$, and $\P_w(H)$ for simplicity. Similarly, $\Psame_w(H)$, $\Pcross_w(H)$, and $\Ppar_w(H)$ will be written $\Psame(H)$, $\Pcross(H)$, and $\Ppar(H)$ when $w$ empty.

\begin{remark}
\label{rem:visibleHyperplanes}
    We give a useful equivalent definition of visibly parallel elements. By definition, the visibly parallel elements $\Psame_w(H)$ are those elements $s\in \P_w(H)$ such that $(ws)^{-1}H\in\H$. Equivalently, $\Psame_w(H)$ are those $s\in \P_w(H)$ such that $\B_{ws}(H)$ is non-empty. Indeed, supposing $s\in \P_w(H)$ we have the following equivalences: $(ws)^{-1}H\in\H$ $\iff$ there exists $t\in S^\pm$ such that $(ws)^{-1}H$ is closest to $x$ separating $x$ and $tx$ (Figure~\ref{fig:visiblyParallel}) $\iff$ there exists $t\in S^\pm$ such that $H$ is closest to $ws x$ separating $ws x$ and $wst x$ $\iff$ there exists $t\in \B_{ws}(H)$. Since elements of $\Pcross_w(H)$ and $\Ppar_w(H)$ are non-visibly parallel by definition, we have that $\B_{ws}(H)$ is empty for any $s$ in $\Pcross(H)$ or $\Ppar(H)$.
\end{remark}

\begin{figure}[h]
    \centering
    \begin{subfigure}[h]{0.29\textwidth}
        \centering
        \includegraphics[width=\textwidth]{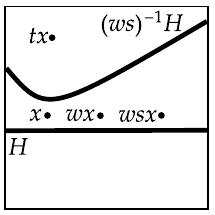}
        \caption{$s$ in $\Psame_w(H)$.}	
        \label{fig:visiblyParallel}
    \end{subfigure}
	\ \ \ \ \ 
    \begin{subfigure}[h]{0.29\textwidth}
        \centering
        \includegraphics[width=\textwidth]{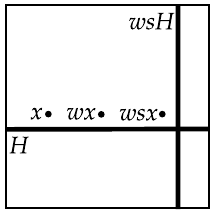}
        \caption{$s$ in $\Pcross_w(H)$.}	
        \label{}
    \end{subfigure}
	\ \ \ \ \ 
    \begin{subfigure}[h]{0.29\textwidth}
        \centering	
	\includegraphics[width=\textwidth]{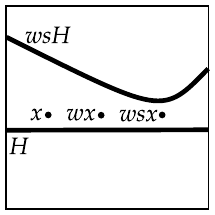}
        \caption{$s$ in $\Ppar_W(H)$.}	
        \label{}
    \end{subfigure}
    \caption{Translate of $x$ by $\Psame_w(H)$, $\Pcross_w(H)$, and $\Ppar_w(H)$.}
    \label{fig:parallel}
\end{figure}

\subsection{Easy progressions}\hfill

\begin{lemma}
\label{lem:easyProgression}
    Fix an action of $F_S$ on a $\mathrm{CAT}(0)$ cube complex $X$, and let $x\in X^0$ be a basepoint. Suppose for some $H\in \H$ and a word $w$ that $x,wx\in N(H)$ lie in the same halfspace of $H$. The following hold:
    
    \begin{enumerate}
        \item For each $s\in \A_w(H)$ the pair $(ws,H)$ is progressing.
        \item For $s\in \Psame(H)$, the pair $(ws,H)$ is progressing.  
        \item For each $s\in \Ppar_w(H)$ and $t\in \B(H)$, the pair $(wst,H)$ is progressing. 
    \end{enumerate}
\end{lemma}

\begin{proof}
    $(1)$ By definition of $\A_w(H)$, $\dist(ws x, N(H))>0$. Thus there exists a hyperplane separating $ws x$ and $N(H)$ by Lemma~\ref{lem:hyperplanes}. Any hyperplane separating $wsx$, $N(H)$ is parallel to $H$ and separates $wsx$, $wx$ since $wx\in N(H)$. Let $H''$ be such a hyperplane closest to $wsx$. Multiplying by $(ws)^{-1}$, we get that $H'=(ws)^{-1}H''$ is a closest hyperplane to $x$ separating $x$, $s^{-1}x$. Thus $H'\in \H$, and $H'$ witnesses the progression of $(ws,H)$. 

    $(2)$ By definition of $\Psame_w(H)$, the hyperplane $H$ does not separate $w x$, $w s x$. Also by definition, $(ws)^{-1}H$ lies in $\H$. Setting $H'=(ws)^{-1}H$, we have $wsH'=H$. Thus $H'$ witnesses the progression of $(ws,H)$.

    $(3)$ By the definition of $\Ppar_w(H)$, the hyperplane $H$ does not separate two of $x,wx,wsx\in N(H)$. Multiplying by $ws$ and using the definition of $\Ppar_w(H)$, we see $wsH$ is parallel to $H$ and lies in the same halfspace of $H$ as $x,wx,wsx$ because $wsx\in N(H)\cap N(ws H)$. By definition of $\B(H)$, $H$ separates $x$ and $tx$. Thus $wsH$ separates $wstx$ and $N(H)$. In particular, there exists a hyperplane separating $wstx$ and $N(H)$. Let $H''$ be such a hyperplane closest to $wstx$. Note that $H''$ is parallel to $H$ and separates $wstx$ and $wsx$, since $wsx\in N(H)$. Furthermore, multiplying by $(wst)^{-1}$ shows $H'=(wst)^{-1}H''$ is a closest hyperplane to $x$ separating $x$, $t^{-1}x$. Thus $H'\in \H$, and $H'$ witnesses the progression of $(ws,H)$.
\end{proof}

\begin{definition}[Tilde notation, child function $c(v)$]
    Suppose $T$ is a rooted tree deterministically labeled over $S^\pm$. We define an operation interchanging between vertices of $T$ and words over $S^\pm$: For a vertex $v\in T$, $\tilde v$ is the label of the unique rooted, directed path $P\to T$ terminating on $v$. For a word $w$ over $S^\pm$, $\tilde w$ denotes the terminal vertex of the rooted, directed path $P\to T$ with label $w$. If no such path exists, then $\tilde w$ is undefined. For a vertex $v$ of $T$, $c(v)$ denotes the set of children of $v$, and $\tilde c(v)$ denotes the set of labels on outgoing edges at $v$.  
\end{definition}

Note that $\tilde v$ is uniquely determined for $v\in T$ since $T$ is deterministically labeled, and $\tilde c(v)$ is exactly the set of labels on edges joining $v$ to a vertex in $c(v)$. 

\begin{corollary}
\label{cor:easyProgression}
    Let $T$ be a finite tree deterministically labeled over $S^\pm$ with a root labeled $H\in \H$ such that each non-leaf vertex has $\lambda$-large growth. Suppose for each leaf $v\in T$ that one of the following holds:
    
    \begin{enumerate}
        \item $|\A_{\tilde v}(H)|\geq\lambda|S^\pm|$
        \item $|\Psame_{\tilde v}(H)|\geq\lambda|S^\pm|$
        \item $|\Ppar_{\tilde v}(H)|\geq\lambda|S^\pm|$ and $|\B(H)|\geq\lambda|S^\pm|$
    \end{enumerate}
    
    Then $T$ can be extended to a checkpoint tree $T_H$ with $\lambda$-large growth. 
\end{corollary}

The extension $T_H$ of $T$ is formed by attaching outgoing edges to leaves either once or twice and then labeling the new leaves over $\H$. Note that Corollary~\ref{cor:easyProgression} also applies when $T$ is a single vertex $v$ labeled by $H$. In this case, $\tilde v$ is the empty word.

\begin{proof}
    Let $v$ be a leaf of $T$, and add labeled outgoing edges at $v$ in the following three ways.
    
    For each $s\in \A_{\tilde v}(H)$ add an edge outgoing from $v$ labeled $s$. A rooted path terminating at a leaf of an added edge has label $\tilde vs$ with $s\in \A_{\tilde v}(H)$. By Lemma~\ref{lem:easyProgression}(1), $(\tilde vs, H)$ is progressing. Thus we can label the terminal vertex with some $H'\in\H$ witnessing the progression of $(\tilde vs, H)$. 
    
    We perform a similar construction for $\Psame_w(H)$. Add an outgoing edge at $v$ with label $s$ for each $s\in \Psame_{\tilde v}(H)$. A rooted path terminating at a leaf of an added edge has label $\tilde vs$ with $s\in \Psame_{\tilde v}(H)$. Lemma~\ref{lem:easyProgression}(2) guarantees that each $(\tilde vs, H)$ is progressing, so we can again label the the terminal vertex of the path with some $H'\in \H$ witnessing the progression.
    
    If $|\B_{\tilde v}(H)|\geq\lambda|S^\pm|$, then we add the following edges, otherwise we add no edges of this form. First, for each $s\in \Ppar_{\tilde v}(H)$ add an outgoing edge at $v$ labeled $s$. Second, at each of the newly created leaves, add an outgoing edge labeled $t$ for each $t\in \B(H)$. In this way we have created new leaves which are grandchildren of $v$, i.e., children of children. A rooted path terminating at a grandchild leaf has label $\tilde v st$ with $s\in \Ppar_{\tilde v}(H)$ and $t\in \B(H)$. By Lemma~\ref{lem:easyProgression}(3), the pair $(\tilde vst, H)$ is progressing. We label this new leaf with some $H'\in \H$ witnessing this progression. 

    The tree $T_H$ is formed by attaching edges at each leaf of $T$ as above. By construction, the tree $T_H$ is progressing. By the hypothesis on leaves of $T$, each leaf $v\in T$ receives at least $\lambda|S^\pm|$ outgoing edges. In the third type of edge addition, since we require $|\B_{\tilde v}(H)|\geq\lambda|S^\pm|$, each child of $v$ corresponding to an element of $\Ppar_{\tilde v}(H)$ receives at least $\lambda|S^\pm|$ outgoing edges. Thus $T_H$ has $\lambda$-large growth.
\end{proof}

\subsection{Progressing with \texorpdfstring{$\P$}{parallels} and \texorpdfstring{$\B$}{backwards}}\hfill

The following property along with the finite-dimensionality of the $\mathrm{CAT}(0)$ cube complex $X$ will help to bound the size of checkpoint trees constructed in this subsection.

\begin{definition}[Rooted lifts, property $(\star)$]
    Let $T$ be a labeled, rooted tree. We say that a rooted, directed path $P\to T$ is a \emph{rooted lift} of a directed path $P'\to T$ if $P$ and $P'$ have the same label $w$. $T$ has \emph{property $(\star)$} if every directed path has a rooted lift.
\end{definition}



In this subsection, we inductively construct checkpoint trees by repeatedly adding new leaves. The following lemma will enable us to maintain property $(\star)$ at each step of the induction.

\begin{lemma}
\label{lem:inductiveStar}
    Let $T$ be a deterministically labeled, rooted tree. Let $T_0\subset T$ be the subtree spanned on all non-leaf vertices. Assume $T_0$ has property $(\star)$. Then $T$ has property $(\star)$ if and only if for every leaf $v\in T_0$ we have $\tilde c(v)\subset \bigcap_{u\in \suf(\tilde v)}\tilde c(\tilde u)$. 
\end{lemma}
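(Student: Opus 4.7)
The plan is to prove the biconditional by treating the two directions separately. The forward direction is a direct application of $(\star)$ to a carefully chosen partial lift; the reverse direction proceeds by induction on the gap between the labels of the partial lift and the directed path, splitting into cases according to whether the vertex of $P'$ at position $|w_1|+1$ is a leaf of $T$.

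For $(\Rightarrow)$, assume $T$ has $(\star)$, fix a leaf $v\in T_0$ with $s\in\tilde c(v)$, and fix a suffix $u$ of $\tilde v$. Writing $\tilde v=u'u$, the rooted path to $v$ factors through a vertex at $\tilde{u'}$, and prolonging it by the $s$-edge at $v$ produces a directed path of label $us$ starting at that vertex. For any $w\in\tilde u$, the rooted path to $w$ has label $u$ and is a rooted partial lift of this directed path; by $(\star)$ it extends to a rooted lift of label $us$, so $w$ has an $s$-child. Intersecting over $w\in\tilde u$ and then over suffixes $u$ yields the claimed containment.

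For $(\Leftarrow)$, assume $T_0$ has $(\star)$ and the stated condition. Given a rooted partial lift $P$ of a directed path $P'$ with labels $w_1$ and $w$, I induct on the gap $|w|-|w_1|$. In the inductive step, write $w=w_1 s w_2'$ and let $v_0$ be the terminal vertex of $P$; it suffices to produce an $s$-child of $v_0$ in $T$, since extending $P$ by that edge yields a rooted partial lift of strictly smaller gap. Labeling the vertices of $P'$ by $y_0,\dots,y_{|w|}$, the vertex $y_{|w_1|}$ has an $s$-child $y_{|w_1|+1}$ in $T$, so $y_{|w_1|}\in T_0$; moreover $\tilde{y_{|w_1|}}=\tilde{y_0}w_1$, so $w_1$ is a suffix of $\tilde{y_{|w_1|}}$ and $v_0\in\tilde{w_1}$. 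Split on whether $y_{|w_1|+1}$ is a leaf of $T$: in the non-leaf subcase the length-$(|w_1|+1)$ prefix of $P'$ lies in $T_0$, the lift $P$ lies in $T_0$, and $(\star)$ of $T_0$ applied to this configuration produces an $s$-child of $v_0$ inside $T_0$.

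In the subcase that $y_{|w_1|+1}$ is a leaf of $T$, the idea is to reduce to the situation where $y_{|w_1|}$ itself is a leaf of $T_0$, so that the hypothesized condition applies with $u=w_1$, yielding $s\in\tilde c(y_{|w_1|})\subseteq\tilde c(v_0)$. The main obstacle is the case when $y_{|w_1|}$ has a nonleaf child in addition to its leaf $s$-child, so it is not itself a leaf of $T_0$; the plan to deal with this is to descend along nonleaf children of $y_{|w_1|}$ within $T_0$ to a leaf of $T_0$, use $(\star)$ of $T_0$ to transfer the relevant rooted-path information, and invoke the condition at the deeper leaf of $T_0$ so obtained in order to relocate the $s$-edge and ultimately realize it out of $v_0$. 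Controlling this relocation is what I expect to be the technical heart of the lemma.
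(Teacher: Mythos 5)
Your forward direction is correct and is essentially the same computation the paper makes (the paper runs both directions in a single pass, parametrizing rooted partial lifts of the truncated path $P'_0$, but the content is identical).

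In the reverse direction the induction on $|w|-|w_1|$ is a reasonable reorganization of the paper's argument, which instead restricts $P'$ to $T_0$ once, applies $(\star)$ of $T_0$ a single time to reach the penultimate vertex, and then crosses the final edge using the hypothesized inclusion. Your nonleaf subcase is correct, but both it and the leaf subcase tacitly use a fact you never justify: that a rooted path whose label is a \emph{proper} prefix of the label of a directed path in $T$ stays inside $T_0$, i.e., cannot terminate prematurely on a leaf of $T$. This holds exactly when all leaves of $T$ lie at a common depth, which is the standing situation in which Lemma~\ref{lem:keyLemma} invokes this result (each $T_i$ there has all leaves at depth $i$). Without it the lemma as literally stated is false: take $T$ with root $r$, edge $r\to a$ labeled $x$, edges $a\to a_1$ labeled $x$ and $a\to a_2$ labeled $y$, and edge $a_1\to a_{11}$ labeled $x$, with $a_2$ and $a_{11}$ leaves. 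Then $T_0$ is the path $r\to a\to a_1$, it has $(\star)$, its leaf $a_1$ satisfies $\tilde c(a_1)=\{x\}\subset\tilde c(r)\cap\tilde c(a)\cap\tilde c(a_1)$, yet the trivial rooted path is a rooted partial lift of $a\to a_2$ (label $y$) that cannot be extended since $r$ has no $y$-child. The paper's own proof relies on the same implicit hypothesis when it asserts that every rooted partial lift of $P'_0$ extends via $(\star)$ of $T_0$.

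This observation also dissolves the ``main obstacle'' you identify. Once all leaves of $T$ share a depth $D$, the vertex $y_{|w_1|}$ in your leaf subcase has a leaf child and hence has depth $D-1$; any other child would then be a nonleaf at depth $D$, contradicting the depth bound, so $y_{|w_1|}$ has only leaf children and is automatically a leaf of $T_0$. The relocation argument you propose is therefore unnecessary in the intended setting, and it cannot be made to work in full generality because the statement is simply not true there. The right fix is not new machinery but an explicit appeal to the equal-depth hypothesis, at which point your inductive argument closes cleanly.
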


\begin{figure}[h]
    \centering
    \includegraphics[width=0.5\textwidth]{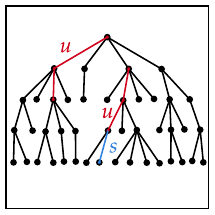}
    \caption{The path $P'$ is labeled $us$. The red subpath is $P'_0$, and the blue edge is $e$. The rooted red path is the lift $P_0$ of $P'_0$. There is a rooted lift of $P$ if and only if there is an edge labeled $s$ at the terminal end of $P_0$.}
    \label{fig:tree}
\end{figure}

\begin{proof}
    Let $P'\to T$ be a directed path. If the image of $P'$ is contained in $T_0$, then by assumption, there exists a rooted lift $P\to T_0$. Since $T_0\subset T$, the lift $P\to T_0$ is also a lift $P\to T$. Thus $T$ has property $(\star)$ if and only if partial rooted lifts of paths terminating on a leaf extend to rooted lifts.
    
    Any directed path $P'\to T$ terminating on a leaf of $T$ has the form $P'=P'_0e$, where $P'_0$ terminates on a leaf $v$ of $T_0$ and $e$ is an edge. Moreover, the label of $P'$ has the form $us$, where $s\in \tilde c(v)$ is the letter labeling $e$ and $u\in \suf(\tilde v)$ is the word labeling $P'_0$. Since $T_0$ has property $(\star)$, there exists a rooted lift $P_0\to T$ of $P'_0\to T$. The rooted lift $P_0\to T$ terminates on the vertex $\tilde u$, and the lift extends to a lift of $P'\to T$ if and only if there exists an outgoing edge at $\tilde u$ labeled $s$. See Figure~\ref{fig:tree}. This last condition can be equivalently written $s\in \tilde c(\tilde u)$. 
    
    We now find conditions which are equivalent to various paths having rooted lifts. We consider larger and larger collections of paths. Ultimately we show the condition in the lemma is equivalent to having rooted lifts of all paths terminating on leaves. As discussed above, this is equivalent to $T$ having property $(\star)$. 
    
    In the decomposition $P'=P'_0e$, letting $e$ vary over the outgoing edges of~$v$, we see that $P'$ has a rooted lift if and only if $\tilde c(v)\subset \tilde c(\tilde u)$. If we let $P'_0$ vary among paths $P'_0\to T$ terminating on $v$, then we see that $P'$ has a rooted lift if and only if $\tilde c(v)\subset\tilde c(\tilde u)$ for each $u\in \suf(\tilde v)$. In other words, $\tilde c(v)\subset\bigcap_{u\in \suf(\tilde v)}\tilde c(\tilde u)$. Finally, letting $P'=P'_0e$ vary over all directed paths $P'\to T$ terminating on a leaf, we see that $P'\to T$ has a rooted lift if and only if $\tilde c(\tilde u)\subset \bigcap_{u\in \suf(\tilde v)}\tilde c( t)$ for every leaf $v\in T_0$. 
\end{proof}

\begin{lemma}
\label{lem:keyLemma}
    Fix an action of $F_S$ on a $\mathrm{CAT}(0)$ cube complex $X$, and fix a basepoint $x\in X^0$. There exists a decreasing function $\epsilon(n)>0$ and a function $\alpha(\epsilon_0,\epsilon_1,n)>0$ defined for $\epsilon_0\in(0,\epsilon(n)]$, $\epsilon_1\in[\epsilon_0,1)$ such that the following holds: If no more than $n$ hyperplanes in $\H$ pairwise cross and $\epsilon_0|S^\pm|\leq|\B(H)|\leq\epsilon_1|S^\pm|$ for some $H\in \H$, then there exists a progressing checkpoint tree at $H$ with $\alpha(\epsilon_0,\epsilon_1,n)$-large growth.
\end{lemma}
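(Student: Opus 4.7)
The plan is to construct $T_H$ by an iterative procedure that alternates between invoking Corollary~\ref{cor:easyProgression} at leaves where one of $|\A_{\tilde{v}}(H)|$, $|\Psame_{\tilde{v}}(H)|$, $|\Ppar_{\tilde{v}}(H)|$ is large, and extending by $\Pcross_{\tilde{v}}(H)$-elements otherwise. The hypothesis $|\B(H)| \geq \epsilon_0 |S^\pm|$ makes the backward-element requirement of Corollary~\ref{cor:easyProgression} uniformly available (note $\B(H)$ is independent of the leaf), so the essential task reduces to forcing one of $|\A_{\tilde{v}}(H)|$, $|\Psame_{\tilde{v}}(H)|$, $|\Ppar_{\tilde{v}}(H)|$ to be at least $\alpha |S^\pm|$ at every leaf of a suitable depth, for the $\alpha = \alpha(\epsilon_0,\epsilon_1,n) > 0$ we build.

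Concretely, I begin at the root $H$ by examining the partition of $S^\pm$. If one of $|\A(H)|$, $|\Psame(H)|$, or $|\Ppar(H)|$ is already at least $\alpha|S^\pm|$, then Corollary~\ref{cor:easyProgression} finishes the tree. Otherwise $|\Pcross(H)| \geq (1 - 3\alpha - \epsilon_1)|S^\pm|$, which is large provided $\alpha$ is small and $\epsilon_1$ is bounded away from $1$, and I extend the root by all elements of $\Pcross(H)$. I then iterate this dichotomy at every new leaf~$v$. Along a maximal sequence of consecutive $\Pcross$-extensions $\s = v_0, v_1, \ldots, v_k$ with $\tilde{v}_i = s_1 \cdots s_i$ and $s_{i+1} \in \Pcross_{\tilde{v}_i}(H)$, the hyperplanes $H_i \defeq \hat{s}_1 \cdots \hat{s}_i H$ all cross $H$ and all pass through $\hat{s}_1 \cdots \hat{s}_i x \in N(H)$, which remains in the halfspace of $H$ containing $x$.

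The main obstacle is showing that this iteration terminates at depth bounded by some $f(n)$. The dimension assumption---no $n+1$ hyperplanes in $\H$ pairwise intersect---does not apply directly to the $H_i$, since they need only cross $H$, not each other. My plan is to introduce an auxiliary combinatorial invariant on positions $\hat{s}_1 \cdots \hat{s}_i x$---for instance a refined count of visible hyperplanes adjacent to the position together with their crossing pattern with $H$---whose complexity is controlled by a function of~$n$ through the dimension hypothesis and which is forced to change nontrivially at each $\Pcross$-extension. An unbounded $\Pcross$-sequence would then produce some intermediate $v_i$ at which $H$ becomes ``visible in many directions'' from $\hat{s}_1 \cdots \hat{s}_i x$, forcing $|\Psame_{\tilde{v}_i}(H)|$ or $|\Ppar_{\tilde{v}_i}(H)|$ to exceed $\alpha|S^\pm|$ and triggering the easy case. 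The resulting $\alpha(\epsilon_0,\epsilon_1, n)$ will degrade multiplicatively across the $f(n)$ levels, producing a bound of the form $\alpha \sim \epsilon_0 (1 - \epsilon_1)^{f(n)}$; the threshold $\epsilon(n)$ will be chosen accordingly so that the iteration has enough slack at each level to maintain $\lambda$-large growth.
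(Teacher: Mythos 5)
You correctly reduce the problem to: construct a tree whose leaves all have one of $|\A_{\tilde v}|$, $|\Psame_{\tilde v}|$, $|\Ppar_{\tilde v}|$ large, branching through $\Pcross$ otherwise, and the real content is a bound on the depth of the $\Pcross$-branching. You also correctly diagnose the obstacle: the hyperplanes $H_i = \hat s_1\cdots\hat s_i H$ encountered along a branch are each guaranteed to cross $H$ but not to cross one another, so the dimension bound does not kick in directly. This is exactly where the paper has to do work, and your proposal does not supply it.

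Your proposed remedy --- an auxiliary ``complexity invariant'' of the position $\hat s_1\cdots\hat s_i x$ forcing $\Psame_{\tilde v}$ or $\Ppar_{\tilde v}$ to become large --- is stated only as a plan, and I don't see why it would be true. There is no reason a long $\Pcross$-chain must make $H$ ``visible in many directions''; the obstruction is genuinely about the crossing pattern among the $H_i$, not about visibility from the moving basepoint. The paper's actual mechanism is different and more structural: it enforces the closure condition it calls property $(\star)$ on the tree $T_i$ (every rooted partial lift of a directed path extends to a rooted lift), implemented via the filtration $\mathcal T_1\subset\cdots\subset\mathcal T_i$ and Lemma~\ref{lem:inductiveStar}. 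Property $(\star)$ guarantees that for any vertex $v$ and any prefix $u\subset\tilde v$, the suffix after $u$ is itself a rooted path, hence its initial letter lies in the appropriate $\Pcross_w$ set; unwinding this yields $\hat{\tilde v}H\pitchfork\hat u H$ for every prefix $u$, so the whole collection $\{\hat u H: u\subset\tilde v\text{ a prefix}\}$ pairwise crosses. That is what lets the dimension hypothesis bound the depth by $n-1$. The price of enforcing $(\star)$ is that each pruning step discards some children at every level; the paper tracks this loss additively through the recursion $D_i(j)$ (not multiplicatively as in your $\epsilon_0(1-\epsilon_1)^{f(n)}$ guess), and then chooses $\epsilon(n)$ so that $D_n(n-1)>0$. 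Without some version of this closure trick, or an actual proof of your claimed invariant, the termination step is a gap.

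One smaller point: you should normalize by $F=S^\pm-\B(H)$ as the paper does, rather than by $S^\pm$. The estimate $|\Pcross|\geq(1-3\alpha-\epsilon_1)|S^\pm|$ is true but awkward; working relative to $F$ keeps the constants $\epsilon_0$, $D_i(j)$ independent of $\epsilon_1$ until the very end, where $(1-\epsilon_1)$ appears as a single multiplicative factor converting $|F|$-fractions to $|S^\pm|$-fractions.
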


For $\epsilon_0>0$, inductively define a diagonal array of real numbers $D_i(j)$, $i\in \mathbf N$, $0\leq j< i$, as follows. For the base case, define $D_1(0)=1-3\epsilon_0$. For $i+j>1$, define $D_i(j)$ by the following two formulas:

    \begin{itemize}
        \item For $j<i-1$, $D_i(j)=(1-\epsilon_0)D_{i-1}(j)-\sum_{0\leq k<j}(1-D_i(k))$.
        \item For $j=i-1$, $D_i(j)=1-3\epsilon_0-\sum_{0\leq k<j} (1-D_i(k))$.
    \end{itemize}

\begin{remark}
\label{rem:dij}
    Each $D_i(j)$ is defined only in terms of those $D_{i'}(j')$ with $i'+j'<i+j$, and $D_i(j)\leq D_{i'}(j')$ for $i'+j'<i+j$. 
\end{remark}

\begin{lemma}
\label{lem:limit}
    For any $i\in \mathbf N$ and $0\leq j\leq i$, we have $D_i(j)\to 1$ as $\epsilon_0\to 0$.
\end{lemma}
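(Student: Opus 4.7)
\begin{proof}[Proof sketch]
The plan is to induct on $i+j$. As observed right after the recursive definition, the value of $D_i(j)$ is determined by the values of $D_{i'}(j')$ with $i'+j' < i+j$ together with $\epsilon_0$, so such an induction is well-founded.

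For the base case, the smallest admissible pair is $(i,j) = (1,0)$, for which $j = i-1$ and hence
\[
    D_1(0) = 1 - 3\epsilon_0 \xrightarrow[\epsilon_0 \to 0]{} 1.
\]

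For the inductive step, assume $D_{i'}(j') \to 1$ as $\epsilon_0 \to 0$ for every admissible $(i',j')$ with $i' + j' < i + j$. We split on which branch of the recursion defines $D_i(j)$. If $j < i - 1$, then
\[
    D_i(j) = (1 - \epsilon_0) D_{i-1}(j) - \sum_{0 \le k < j} (1 - D_i(k)).
\]
The factor $(1-\epsilon_0)$ tends to $1$, and $D_{i-1}(j) \to 1$ by the inductive hypothesis since $(i-1) + j < i + j$. Each summand $(1 - D_i(k))$ with $k < j$ tends to $0$ by the inductive hypothesis, since $i + k < i + j$; as the sum has at most $j$ terms (a fixed finite number independent of $\epsilon_0$), the whole sum tends to $0$. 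Hence $D_i(j) \to 1$. The case $j = i-1$ is identical: $D_i(j) = 1 - 3\epsilon_0 - \sum_{0 \le k < j}(1 - D_i(k))$, where $1 - 3\epsilon_0 \to 1$ and the sum tends to $0$ by the same reasoning. This completes the induction.
\end{proof}

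The only delicate point is confirming that the recursion really expresses $D_i(j)$ in terms of strictly smaller $i' + j'$; the sums $\sum_{0\le k<j}(1-D_i(k))$ involve $D_i(k)$ with $i+k < i+j$, so this is fine. I do not expect any genuine obstacle: the argument is a straightforward double induction, and the only thing one must verify carefully is the well-foundedness of the indexing, which the paper has already pointed out.
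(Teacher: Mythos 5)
Your proof is correct and takes the same approach as the paper: induction on $i+j$, with base case $D_1(0)=1-3\epsilon_0\to 1$ and the inductive step coming directly from the recursion (which the paper states more tersely but identically). You simply spell out the two branches of the recursion in more detail.
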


\begin{proof}
    We prove the lemma by induction. $D_1(0)=1-3\epsilon_0$ by the second equation, so $D_1(0)\to 1$ as $\epsilon_0\to 0$. Suppose the lemma holds for all $D_{i'}(j')$ with $i'+j'<i+j$. From the defining equations of $D_i(j)$ it can be seen that $D_i(j)\to 1$ as $\epsilon_0\to 0$ if each $D_{i'}(j')\to 1$ as $\epsilon_0\to 0$. 
\end{proof}

\begin{proof}[Proof of Lemma~\ref{lem:keyLemma}]
    We will only consider the partition of $\H$ defined relative to $H$, so we use $\A_w$, $\B_w$, and $\P_w$ in place of $\A_w(H)$, $\B_w(H)$, and $\P_w(H)$ for the duration of this proof. 
    
    Let $F=S^\pm-\B$, and note $|F|\geq (1-\epsilon_1)|S^\pm|$ since $|\B|\leq \epsilon_1|S^\pm|$. If $|\A|\geq\epsilon_0|F|$, $|\Psame|\geq\epsilon_0|F|$, or $|\Ppar|\geq\epsilon_0|F|$, then by Corollary~\ref{cor:easyProgression} there is a progressing tree at $H$ with $\epsilon_0(1-\epsilon_1)$-large growth. Otherwise, we have $|\Pcross|\geq (1-3\epsilon_0)|F|$, since $\Pcross\subset F$ and $F$ is partitioned between $\A$, $\Psame$, $\Ppar$, and $\Pcross$. Supposing $|\Pcross|\geq (1-3\epsilon_0)|F|$, we will construct a progressing checkpoint tree $T$ with $\epsilon_0(1-\epsilon_1)D_n(n-1)$-large growth. Supposing we have constructed such a tree, the lemma easily follows. Indeed, the value of $\epsilon_0(1-\epsilon_1)D_n(n-1)$ is positive for $\epsilon_1<1$ and sufficiently small $\epsilon_0>0$ by Lemma~\ref{lem:limit}. Let $\alpha(\epsilon_0,\epsilon_1,n)\defeq \epsilon_0(1-\epsilon_1)D_n(n-1)$ and define $\epsilon(n)$ to be a constant such that $\alpha(\epsilon_0,\epsilon_1,n)$ is positive for all $\epsilon_0\leq \epsilon(n)$. Additionally, choose $\epsilon(n)$ be be decreasing in $n$.
    
    
    We construct $T$ as the limit of a finite sequence of deterministically labeled trees $T_1,T_2,\ldots, T_m=T$. For $i<m$, all leaves of $T_i$ have depth $i$. Given $T_i$, the next tree $T_{i+1}$ is constructed by deleting some edges from $T_i$ and then adding children to all remaining leaves. Furthermore, for $i<m$, each $T_i$ will satisfy the following:

    \begin{enumerate}
        \item $T_i$ has property $(\star)$.
        \item For any vertex $v\in T_i$, $\tilde c(v)\subset \Pcross_{\tilde v}\cap F$.
        \item A vertex of depth $j<i$ in $T_i$ has at least $D_i(j)|F|$ children.
    \end{enumerate}
    
    
    
    

    Let $T_1$ be the rooted, depth-one tree with an edge for each element of $\Pcross$, and label each edge by its corresponding element in $\Pcross$. $T_1$ trivially satisfies condition (1) and satisfies condition (2) by definition. Condition (3) holds by the assumption $|\Pcross|\geq (1-3\epsilon_0)|F|$.

    Suppose we have constructed $T_1,\ldots, T_i$. Let $v$ be a leaf of $T_i$, and let $u$ be its ancestor. By condition (2), the label $s$ of the edge joining $u$ to $v$ lies in $\Pcross_{\tilde u}\cap F$. By Remark~\ref{rem:visibleHyperplanes} and since $\tilde us=\tilde v$, it follows that $\B_{\tilde v}$ is empty. Thus the set $F$ is partitioned between $\A_{\tilde v}$, $\Psame_{\tilde v}$, $\Pcross_{\tilde v}$, and $\Ppar_{\tilde v}$. There are two cases: (i) at least one of $|\A_{\tilde v}|>\epsilon_0|F|$ or $|\Psame_{\tilde v}|>\epsilon_0|F|$ or $|\Ppar_{\tilde v}|>\epsilon_0|F|$; otherwise (ii) $|\Pcross_{\tilde v}\cap F|\geq (1-3\epsilon_0)|F|$. We define a partition of the vertices of $T_i$ into sets $\RN{1}$ and $\RN{2}$. The partition is defined inductively from larger depth vertices to lower depth vertices, beginning with the leaves of $T_i$. 
    
    \begin{itemize}
        \item If $v$ is a leaf of $T_i$ such that (i) holds, then $v\in \RN{1}$. Otherwise (ii) holds and $v\in \RN{2}$.
        \item Let $u$ be a vertex whose children $c(u)$ have been partitioned between $\RN{1}$ and $\RN{2}$. If $|c(u)\cap \RN{1}|\geq \epsilon_0|c(u)|$, then $u\in \RN{1}$. Otherwise $|c(u)\cap \RN{2}|>(1-\epsilon_0)|c(u)|$ and $u\in \RN{2}$.
    \end{itemize}
    
    We construct $T_{i+1}$ differently depending on whether the root of $T_i$ is in $\RN{1}$ or $\RN{2}$. If the root is in $\RN{1}$, then $T_{i+1}$ will be the final tree in the sequence, i.e., $m=i+1$. Supposing the root is in $\RN{1}$, begin by deleting all vertices in $\RN{2}$ from $T_i$. Note that $T_i-\RN{2}$ may be disconnected. Let $T'_i$ be the component of $T_i-\RN{2}$ containing the root. By condition (3) and construction of $T'_i$, a depth $j<i$ vertex of $T'_i$ has at least $\epsilon_0D_i(j)|F|$ children. By Remark~\ref{rem:dij} and the inequality $|F|\geq (1-\epsilon_1)|S^\pm|$, we have $\epsilon_0D_i(j)|F|\geq \epsilon_0(1-\epsilon_1)D_n(n-1)|S^\pm|$. Thus depth $j<i$ vertices of $T'_i$ have $\epsilon_0(1-\epsilon_1)D_n(n-1)$-growth, which is our desired growth. For each leaf $v\in T'_i$ one of $|\A_{\tilde v}|>\epsilon_0|F|$ or $|\Psame_{\tilde v}|>\epsilon_0|F|$ or $|\Ppar_{\tilde v}|>\epsilon_0|F|$ holds. By the inequality $|F|\geq (1-\epsilon_1)|S^\pm|$ and Corollary~\ref{cor:easyProgression}, we can extend $T_{i}$ to a progressing tree $T_{i+1}$ with $\epsilon_0(1-\epsilon_1)D_n(n-1)$-large growth.

    Suppose the root of $T_i$ is in $\RN{2}$. Let $T'_i$ be the component of $T_i-\RN{1}$ containing the root, and let $\mathcal T$ be the maximal subtree of $T'_i$ satisfying property $(\star)$. There is a filtration $\mathcal T_1\subset \cdots \subset\mathcal T_i=\mathcal T$ where $\mathcal T_j$ is the maximal rooted, depth $j$ subtree of $T'_i$ satisyfing property $(\star)$. Note that unions of rooted subtrees of $T'_i$ satisfying $(\star)$ also satisfy $(\star)$, so the $\mathcal T_j$ are well-defined. Additionally, for each $j$, the subtree spanned on the depth at most $j-1$ vertices in $\mathcal T_{j}$ is exactly $\mathcal T_{j-1}$. 
    
    We now focus our attention on $T'_i$ and the filtration $\mathcal T_1\subset\cdots\subset \mathcal T_i$. Until otherwise mentioned, the tilde notation (i.e. $\tilde v$) and child notation (i.e. $c(v)$) are understood to be relative to $T'_i$. For $v\in \mathcal T_j$, we let $\tilde c_j(v)$ denote labels of edges outgoing from $v$ and contained in $\mathcal T_j$. Let $v$ be a depth $j$ vertex in $\mathcal T_{j+1}\subset T'_i$ with $j+1<i$. Note $T_j$ is the subtree of $T_{j+1}$ spanned on all non-leaf vertices. By Lemma~\ref{lem:inductiveStar} and maximality of $\mathcal T_{j+1}$, the set of labels on outgoing edges at $v$ is exactly $\tilde c(v)\cap \big(\bigcap_{u\in \suf(\tilde v)}\tilde c_j(\tilde u)\big)$.
    
    We will inductively show that a depth $j$ vertex $v\in \mathcal T_{j+1}$ has $D_{i+1}(j)|F|$ children. Since the trees are deterministically labeled, this is equivalent to showing $|\tilde c_{j+1}(v)|\geq D_{i+1}(j)|F|$. As a base case, the root of $\mathcal T_1$ has $D_{i+1}(0)=(1-\epsilon_0)D_i(0)|F|$ children since the root was in $\RN{2}$ and the root of $T_i$ had $D_i(0)|F|$-many children by condition (3). For higher depth vertices, we have the following bound:

    \begin{align*}
        \bigg|\tilde c(v)\cap \big(\bigcap_{u\in \suf(\tilde v)-\{\tilde v\}}\tilde c_j(\tilde u)\big) \bigg| &\geq |\tilde c(v)| -\sum_{u\in \suf(\tilde v)-\{\tilde v\}}|F- \tilde c_j(\tilde u)|\\ 
        &= (1-\epsilon_0)D_i(j)|F| - \sum_{0\leq k< j} (1-D_{i+1}(k))|F|\\ 
        &= D_{i+1}(j)|F|.
    \end{align*}
    
    In particular, we have established that a depth $j<i$ vertex of $\mathcal T_i=\mathcal T$ has $D_{i+1}(j)|F|$ children. 
    
    We now examine the tree $\mathcal T$. The tilde notation (e.g. $\tilde v$) and child notation (e.g. $c(v)$) are now understood to be relative to $\mathcal T$. To finish the construction of $T_{i+1}$, we add children to the leaves of $\mathcal T$. Let $v\in \mathcal T$ be a leaf, and recall the leaves all have depth $i$. By definition of $\RN{2}$, $|\Pcross_{\tilde v}\cap F|\geq (1-3\epsilon_0)|F|$, so we have the following bound:

    \begin{align*}
        \bigg|\Pcross_{\tilde v}\cap F\cap \big(\bigcap_{u\in \suf(\tilde v)-\{\tilde v\}}\tilde c(\tilde u)\big)\bigg| &\geq |\Pcross_{\tilde v}\cap F| - \sum_{u\in \suf(\tilde v)-\{\tilde v\}} |F-\tilde c(\tilde u)|\\
        &= (1-3\epsilon_0)|F| - \sum_{0\leq k<i} (1-D_{i+1}(k))|F|\\
        &=D_{i+1}(i)|F|
    \end{align*}

    For each element $s\in \Pcross_{\tilde v}\cap F\cap \big(\bigcap_{u\in \suf(\tilde v)-\{\tilde v\}}\tilde c(\tilde u))$, we add an edge with an endpoint at $v$ with label $s$. Thus $v$ has at least $D_{i+1}(i)|F|$ children. The tree $T_{i+1}$ is the result of adding children to each leaf $v\in \mathcal T$ as above. By Lemma~\ref{lem:inductiveStar}, $T_{i+1}$ has property $(\star)$ and thus satisfies condition (1). All new edges have labels from the relevant $\Pcross_{\tilde v}\cap F$ sets, so $T_{i+1}$ also satisfies condition (2). Finally, the inequalities above show that $T_{i+1}$ satisfies condition (3).

    We have finished defining the sequence $T_1, T_2,\ldots ,$ and we now show that the sequence terminates in a bounded number of steps. Conditions (1) and (2) have the following consequence: For any vertex $v\in T_i$, the set of hyperplanes $\{u H:u\in \pre(\tilde v)\}$ pairwise cross. We first show that ${\tilde v} H\pitchfork uH$ for all $u\in \pre(\tilde v)$. Fixing a prefix $u$ of $\tilde v$, we can write $\tilde v$ in the form $\tilde v=uws$, where $s$ is the final letter of $\tilde v$ and $w$ is a (potentially empty) subword. We have $s\in \Pcross_{w}$ by conditions (1) and (2). By definition of $\Pcross_{w}$, we have $ws H \pitchfork H$. Multiplying by $u$ and using $uws={\tilde v}$, we get ${\tilde v} H\pitchfork uH$ as desired. For any $u\in\pre(\tilde v)$, we can repeat the above argument with $\tilde u$ in the place of $v$ to conclude $u H\pitchfork w H$ for all $w\in\pre(u)$. This suffices to show the hyperplanes in $\{u H:u\in \pre(\tilde v)\}$ pairwise cross.  
    
    The above analysis allows us to bound the number of trees in our sequence. By assumption, no more than $n$ hyperplanes in $\H$ can pairwise cross. The argument above shows that associated to a depth $k$ vertex $v\in T_i$ is a set $\{u H:u\in \pre(\tilde v)\}$ of $k+1$ pairwise crossing hyperplanes. Thus the $T_i$ can have depth at most $n-1$, and our sequence must terminate in at most $n-1$ steps, i.e., $m\leq n-1$. The inequality $m\leq n-1$, condition (3), and Remark~\ref{rem:dij} imply that $T=T_m$ has at least $\epsilon_0(1-\epsilon_1)D_n(n-1)$-growth.  
\end{proof}

\subsection{An inductive construction}\hfill

\begin{definition}[Hyperplane inversion]
    A group $G$ acts on a $\mathrm{CAT}(0)$ cube complex $X$ \emph{without hyperplane inversions} if there does not exist $g\in G$ and a hyperplane $H$ of $X$ such that $gH=H$ and $g$ interchanges the halfspaces of $H$.
\end{definition}

Group actions can always be arranged to be without hyperplane inversions by passing to the first cubical subdivision. That is, for any action of $G$ on a $\mathrm{CAT}(0)$ cube complex $X$, there is an induced action of $G$ on the first cubical subdivision of $X$, and this latter action is always without hyperplane inversions. The induced action has a global fixed-point if and only if the original action has a global fixed-point. Thus we can freely assume that our group actions are without hyperplane inversions. 

\begin{figure}[h]
    \centering
    \begin{subfigure}[h]{0.299\textwidth}
        \centering
        \includegraphics[width=\textwidth]{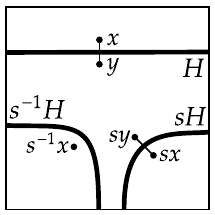}
        \caption{Both $s, s^{-1}$ in $\B(H)$.}	
        \label{fig:bound1}
    \end{subfigure}
	\ \ \ \ \ 
    \begin{subfigure}[h]{0.3\textwidth}
        \centering
        \includegraphics[width=\textwidth]{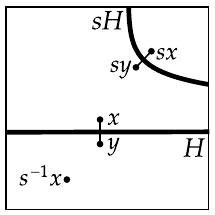}
        \caption{Only $s^{-1}$ in $\B(H)$.}	
        \label{fig:bound2}
    \end{subfigure}
    \caption{Comparing $\dist(x,sx)$ and $\dist(y,sy)$.}
    \label{}
\end{figure}

\begin{lemma}
\label{lem:backwardsBound}
    Fix an action of $F_S$ on a $\mathrm{CAT}(0)$ cube complex $X$ without hyperplane inversions, and let $x\in X^0$ be a basepoint that minimizes $\sum_{s\in S^\pm}\dist(x,sx)$. Then $\B(H)\leq \frac12 |S^\pm|$ for any $H\in \H$.
\end{lemma}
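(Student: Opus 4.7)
The plan is to exploit the minimality of $f(y)\defeq \sum_{s\in S^\pm}\dist(y,\hat s y)$ at $x$ by comparing $f(x)$ to $f(x')$, where $x'\in N(H)$ is the $0$-cube opposite to $x$ across $H$. This $x'$ is well-defined because $H\in\H$ forces $x\in N(H)$, and one has $\dist(x,x')=1$. Because each $\hat s$ is an isometry, $\hat s x'$ is similarly adjacent to $\hat s x$ across the (possibly different) hyperplane $\hat s H$.

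For each $s\in S^\pm$, I would compute $\dist(x',\hat s x')-\dist(x,\hat s x)$ by concatenating the path $x'\to x\to \hat s x\to \hat s x'$ and applying Lemma~\ref{lem:hyperplanes}. Writing $\mathcal{S}(a,b)$ for the set of hyperplanes separating $a$ and $b$, the standard observation that a hyperplane separates the endpoints of a concatenated path iff it crosses an odd number of the consecutive pairs gives
\[
\mathcal{S}(x',\hat s x') \;=\; \{H\}\,\triangle\,\mathcal{S}(x,\hat s x)\,\triangle\,\{\hat s H\}.
\]
When $\hat s H=H$ the two singletons cancel and $\dist(x',\hat s x')=\dist(x,\hat s x)$. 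When $\hat s H\neq H$ the singletons are disjoint, and the contribution to $\dist(x',\hat s x')-\dist(x,\hat s x)$ equals $2-2\mathbf{1}[H\in\mathcal{S}(x,\hat s x)]-2\mathbf{1}[\hat s H\in\mathcal{S}(x,\hat s x)]$. By definition the first indicator is $s\in\B(H)$, and applying the isometry $\hat s^{-1}$ identifies the second with $s^{-1}\in\B(H)$.

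The no-hyperplane-inversion hypothesis enters at exactly one point: $s\in\B(H)$ forces $\hat s H\neq H$, since otherwise $\hat s$ would fix $H$ setwise without inversion and so preserve each halfspace, contradicting $\hat s x$ lying opposite to $x$ across $H$. Combining this with the involution $s\leftrightarrow s^{-1}$ on $S^\pm$ (which preserves the condition $\hat s H\neq H$), both sets $\{s:\hat s H\neq H,\, s\in\B(H)\}$ and $\{s:\hat s H\neq H,\, s^{-1}\in\B(H)\}$ have size $|\B(H)|$. Summing the per-$s$ contributions over $S^\pm$ yields
\[
f(x')-f(x) \;=\; 2\bigl|\{s\in S^\pm:\hat s H\neq H\}\bigr| - 4|\B(H)|.
\]

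Since $x$ minimizes $f$, the left side is nonnegative, whence $|\B(H)|\leq \tfrac{1}{2}|\{s:\hat s H\neq H\}|\leq \tfrac{1}{2}|S^\pm|$, as desired. The only real obstacle is bookkeeping: keeping track of when the symmetric difference pulls in $\{H\}$ and $\{\hat s H\}$ separately, and then handling the degenerate case $\hat s H=H$. The no-hyperplane-inversion hypothesis is precisely what makes that case contribute nothing and allows the remaining contributions to pair up cleanly under $s\leftrightarrow s^{-1}$.
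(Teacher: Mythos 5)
Your argument is correct and shares the paper's overall strategy — compare the displacement sum $f(y)\defeq\sum_{s\in S^\pm}\dist(y,\hat s y)$ at the minimizer $x$ with its value at the opposite $0$-cube $x'\in N(H)$ — but the way the per-letter change $\dist(x',\hat s x')-\dist(x,\hat s x)$ is computed is genuinely different. The paper's proof is geometric: it pairs $s$ with $s^{-1}$, splits into three cases according to how many of $s,s^{-1}$ lie in $\B(H)$, and in the main case chooses an explicit geodesic from $\hat s^{-1}x$ to $x$ that crosses $\hat s^{-1}H$ first and $H$ last to show $\dist(x',\hat s x')=\dist(x,\hat s x)-2$; the remaining case yields only an inequality $\leq 2$. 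Your argument replaces the geodesic reasoning and the case split with a single parity computation on the sets $\mathcal S(\cdot,\cdot)$ of separating hyperplanes, via the identity $\mathcal S(x',\hat s x')=\{H\}\triangle\mathcal S(x,\hat s x)\triangle\{\hat s H\}$, producing the exact formula
\[
    f(x')-f(x)=2\,\bigl|\{s\in S^\pm:\hat s H\neq H\}\bigr|-4|\B(H)|
\]
and thereby the marginally sharper conclusion $|\B(H)|\leq\tfrac12\,|\{s:\hat s H\neq H\}|$. The no-inversion hypothesis plays the same role in both proofs — it forces $\hat s H\neq H$ whenever $s\in\B(H)$ — which in the paper guarantees the geodesic can be arranged to cross two distinct hyperplanes at its extremities, and in your computation makes the degenerate $\hat s H=H$ terms vanish so that the double count against $|\B(H)|$ is exact.
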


\begin{proof}
    Suppose $\B_H>\frac12 |S^\pm|$ for some $H\in \H$. Let $y$ be the $0$-cube opposite $x$ in $N(H)$. We show $\sum_{s\in S^\pm}\dist(y,sy)<\sum_{s\in S^\pm}\dist(x,sx)$, contradicting minimality. For each pair $s$, $s^{-1}$ we have the following three possibilities:

    \begin{enumerate}
    \item Both $s,s^{-1}\in \B(H)$. We show that $d(y,sy)=d(x,sx)-2$. By assumption $H$ is a closest hyperplane to $x$ separating $s^{-1}x$, $x$. Multiplying by $s$ shows that $sH$ is a closest hyperplane to $sx$ separating $x$, $sx$. We also have that $H$ is a closest hyperplane to $x$ separating $x$, $sx$ by assumption. The points $y$ and $sy$ are opposite to $x$ and $sx$ across the hyperplanes $H$ and $sH$. Thus, as long as $s^{-1}H$ and $H$ are distinct, then $y$, $sy$ are separated by two fewer hyperplanes than $x$, $sx$. See Figure~\ref{fig:bound1}. If $s^{-1}H=H$, then $s$ stabilizes $H$ and interchanges its halfspaces, contradicting no hyperplane inversions. Thus $s^{-1}H\neq H$ and $d(y,sy)=d(x,sx)-2$.
    
    \item Exactly one of $s\in \B(H)$ or $s^{-1}\in \B(H)$. We show that $\dist(y,sy)=\dist(x,sx)$. The cases are symmetric, so we deal just with the case $s^{-1}\in \B(H)$. By assumption $H$ is a closest hyperplane to $x$ separating $x$, $s^{-1}x$. Multiplying by $s$ we get that $sH$ is a closest hyperplane to $sx$ separating $sx$, $x$. By assumption the hyperplane $H$ does not separate $sx$, $x$. Since $sy$ and $y$ are opposite to $sx$ and $x$ across the hyperplanes $sH$ and $H$, the points $sy$, $y$ are separated by the same number of hyperplanes as $sx$, $x$. See Figure~\ref{fig:bound2}. Hence $\dist(y,sy)=\dist(x,sx)$.
    
    \item Both $s,s^{-1}\not\in \B(H)$. In this case we immediately have $\dist(y,sy)\leq \dist(x,sx)+2$. The triangle inequality implies $\dist(y,sy)\leq \dist(y,x)+\dist(x,sx)+\dist(sx,sy)$, and $\dist(x,y)=\dist(sx,sy)=1$ since $x$, $y$ are opposite.
\end{enumerate}

Let the partition $S^\pm=\RN{1}\sqcup \RN{2}\sqcup \RN{3}$ correspond to the three cases above, e.g., $s,s^{-1}\in \RN{2}$ if the pair $s$, $s^{-1}$ satisfies case (2). Note that $|\RN{1}|>|\RN{3}|$ since $|\B(H)|>\frac12 |S^\pm|$. Thus we have the following inequality.
\[
\sum_{s\in S^\pm}\dist(x,sx)\geq\sum_{s\in \RN{1}}\big( \dist(y,sy)+2 \big) +\sum_{s\in \RN{2}}\dist(y,sy) + \sum_{s\in \RN{3}}\big(\dist(y,sy)-2\big)>\sum_{s\in S^\pm}\dist(y,sy).
\]
\end{proof}

In our construction of progressing automata, we will sometimes pass to a subset of $S^\pm$ and construct an automaton over the subset. The following notation facilitates this. 

\begin{definition}[$\H_{\mathsf P}$ for $\mathsf P\subset S^\pm$]
    Let $\mathsf P$ be a subset of  $S^\pm$. Then $\H_{\mathsf P}\subset \H$ are those hyperplanes which separate $x$ and $sx$ for some $s\in \mathsf P$. Equivalently, $\H_{\mathsf P}= \bigcup_{s\in \mathsf P}\H_s$. 
\end{definition} 

\begin{lemma}
\label{lem:induction}
    Fix an action of $F_S$ on a $\mathrm{CAT}(0)$ cube complex $X$, and let $x\in X^0$ be a basepoint such that $\Fix(x)$ is empty. Suppose that no more than $n$ hyperplanes in $\H$ pairwise cross. If $\P(H)\geq \lambda|S^\pm|$ for some $H\in \H$ and $\lambda\in (0,1)$, then there exists a symmetric $\mathsf S^\pm\subseteq S^\pm$ such that $|\mathsf S^\pm|\geq (2\lambda-1)|S^\pm|$ and no more than $n-1$ hyperplanes of $\H_{\mathsf S^\pm}$ pairwise cross.
\end{lemma}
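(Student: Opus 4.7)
The plan is to exploit the product structure $N(H)\cong H\times[-1,1]$ to show that, for any $s\in\P(H)$, every hyperplane in $\H_s$ is forced to cross $H$. Once this is established, symmetrizing $\P(H)$ and a one-line crossing count will finish the argument.

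First I would prove the geometric key point: for each $s\in\P(H)$ and each $H'\in\H_s$, one has $H'\pitchfork H$. Since $x$ and $\hat s x$ both lie in $N(H)$ on a common side of $H$, convexity of $N(H)$ (Theorem~\ref{thm:hyperplanes}(2)) yields an $\ell^1$-geodesic from $x$ to $\hat s x$ contained in $N(H)$. By definition $H'$ separates $x$ from $\hat s x$, so $H'$ meets this geodesic and therefore meets $N(H)$. Moreover $H'\neq H$, since $H$ does not separate $x$ from $\hat s x$. Under the product structure $H\times[-1,1]$, the hyperplanes of the subcomplex $N(H)$ are $H=H\times\{0\}$ together with the hyperplanes $H''\times[-1,1]$, one for each hyperplane $H''$ of $H$; only the first fails to cross $H$. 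Since $H'\cap N(H)$ is a hyperplane of $N(H)$ distinct from $H$, it must be of the second type, and so $H'\pitchfork H$. This geometric step is the main obstacle; the remaining steps are bookkeeping.

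Next I would set $\mathsf{S}^\pm\defeq \P(H)\cap \P(H)^{-1}$. This is symmetric by construction, and since $|\P(H)^{-1}|=|\P(H)|\geq \lambda|S^\pm|$, inclusion--exclusion inside the finite set $S^\pm$ gives
\[
|\mathsf{S}^\pm|\geq |\P(H)|+|\P(H)^{-1}|-|S^\pm|\geq (2\lambda-1)|S^\pm|.
\]

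Finally, I would combine the two steps: as $\mathsf{S}^\pm\subseteq \P(H)$, the first step shows every hyperplane in $\H_{\mathsf{S}^\pm}=\bigcup_{s\in\mathsf{S}^\pm}\H_s$ crosses $H$, and in particular is distinct from $H$. Thus if $H_1,\dots,H_k\in\H_{\mathsf{S}^\pm}$ pairwise cross, then $\{H,H_1,\dots,H_k\}\subset\H$ is a family of $k+1$ pairwise crossing hyperplanes, so the hypothesis on $n$ forces $k+1\leq n$, i.e.\ $k\leq n-1$, as required.
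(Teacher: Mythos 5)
Your proof is correct and follows essentially the same route as the paper: symmetrize $\P(H)$ by taking $\P(H)\cap\P(H)^{-1}$, apply inclusion--exclusion for the size bound, and observe that every hyperplane in $\H_{\P(H)}$ crosses $H$, so any pairwise-crossing family in $\H_{\mathsf S^\pm}$ can be enlarged by $H$. The only difference is that you spell out the geometric fact that the paper merely asserts in one clause (that every hyperplane in $\H_{\P(H)}$ crosses $H$), via convexity of $N(H)$ and its product structure.
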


\begin{proof}
    Suppose $H\in \H$ satisfies $\P(H)\geq \lambda|S^\pm|$. Since $sx\neq x$ for $s\in \P(H)$, each hyperplane in $\H_{\P(H)}$ crosses $H$, so at most $n-1$ hyperplanes in $\H_{\P(H)}$ pairwise cross. See Figure~\ref{fig:induction}. The intersection $\P(H)\cap \P(H)^{-1}$ is clearly symmetric, and thus can be written as $\mathsf S^\pm=\mathsf S\sqcup \mathsf S^{-1}$ for some $\mathsf S\subset S$. We have the bound $|\mathsf S^\pm|= |\P(H)\cap \P(H)^{-1}| \geq \lambda|S^\pm|-(1-\lambda)|S^\pm|= (2\lambda-1)|S^\pm|$, and no more than $n-1$ hyperplanes in $\H_{\mathsf S^\pm}$ pairwise cross since $\H_{\mathsf S^\pm}\subset \H_{\P(H)}$. 
\end{proof}

\begin{figure}[h]
    \centering
    \includegraphics[width=0.3\textwidth]{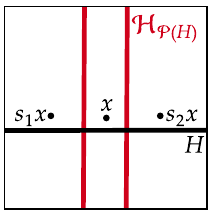}
    \caption{The hyperplanes in $\H_{\P(H)}$ are colored red. Each hyperplane in $\H_{\P(H)}$ crosses $H$.}
    \label{fig:induction}
\end{figure}

The above lemma will allow us to induct on the number of pairwise crossing hyperplanes in $\H$. To effectively do this, we need the following bound. 

\begin{lemma}
\label{lem:backwardsInductiveBound}
    Let $S$ and $\mathsf S$ be as in Lemma~\ref{lem:induction}. If $\B(H)\leq\beta|S^\pm|$ for each $H\in \H$, then $\B(\mathsf H)\leq \frac{\beta}{2\lambda-1}|\mathsf S^\pm|$ for each $\mathsf H\in \H_{\mathsf S^\pm}$.
\end{lemma}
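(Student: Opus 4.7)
The plan is essentially a one-line counting argument once the notation is pinned down. In the statement, $\B(\mathsf{H})$ refers to the backward elements computed with respect to the new generating set $\mathsf{S}^\pm$ provided by Lemma~\ref{lem:induction}: these are the $s \in \mathsf{S}^\pm$ such that $x$ and $\hat s x$ lie in different halfspaces of $\mathsf{H}$. Since this partition condition is intrinsic to $\mathsf{H}$ and the point $x$ rather than to a chosen generating set, the set we want to bound is exactly $\B(\mathsf{H}) \cap \mathsf{S}^\pm$, where the $\B(\mathsf{H})$ on the right-hand side is the original backward set inside $S^\pm$.

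With this identification the proof proceeds as follows. First I would note the trivial containment $\B(\mathsf{H}) \cap \mathsf{S}^\pm \subseteq \B(\mathsf{H})$, which together with the hypothesis gives $|\B(\mathsf{H}) \cap \mathsf{S}^\pm| \leq |\B(\mathsf{H})| \leq \beta |S^\pm|$. Then I would apply the lower bound $|\mathsf{S}^\pm| \geq (2\lambda - 1)|S^\pm|$ established in Lemma~\ref{lem:induction}, which (assuming the nontrivial case $\lambda > 1/2$, without which $\mathsf{S}^\pm$ could be empty and the statement is vacuous) yields $|S^\pm| \leq \frac{1}{2\lambda - 1}|\mathsf{S}^\pm|$. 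Combining these gives
\[
|\B(\mathsf{H}) \cap \mathsf{S}^\pm| \;\leq\; \beta |S^\pm| \;\leq\; \frac{\beta}{2\lambda - 1} |\mathsf{S}^\pm|,
\]
which is the desired inequality.

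I do not expect any significant obstacle here; the content is entirely bookkeeping, and the only subtlety is the convention for what $\B(\mathsf{H})$ means after restricting to the subset $\mathsf{S}^\pm$ of generators. The lemma is designed to feed into an inductive argument on the maximum number of pairwise crossing visible hyperplanes, where Lemma~\ref{lem:induction} reduces this number by one at the cost of passing to $\mathsf{S}^\pm$, and the present lemma controls how the backward-proportion hypothesis degrades under that reduction.
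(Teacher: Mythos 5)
Your proof is correct and is essentially the same as the paper's one-line argument, which likewise substitutes the bound $|S^\pm| \leq |\mathsf S^\pm|/(2\lambda-1)$ from Lemma~\ref{lem:induction} into the hypothesis $\B(H)\leq\beta|S^\pm|$. The only difference is that you spell out the implicit identification that $\B(\mathsf H)$ computed relative to $\mathsf S^\pm$ equals $\B(\mathsf H)\cap\mathsf S^\pm$ (since the backward condition is intrinsic to $\mathsf H$ and $x$), hence is contained in the full set $\B(\mathsf H)\subset S^\pm$; the paper leaves this unstated.
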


\begin{proof}
    Combine $\B(H)\leq \beta|S^\pm|$ with the inequality $|S^\pm|\leq |\mathsf S^\pm|/(2\lambda-1)$ from Lemma~\ref{lem:induction}.
\end{proof}

\begin{theorem}
\label{thm:mainTechnicalTheorem}
    Fix an action of $F_S$ on an $n$-dimensional $\mathrm{CAT}(0)$ cube complex $X$. Let $x\in X^0$ be a $0$-cube such that $\Fix(x)$ is trivial. Suppose for some $\epsilon_1\in(0,1)$ that $|\B(H)|\leq\epsilon_1|S^\pm|$ for every $H\in\H$. There exists a progressing automaton with at most $(|S^\pm|n+1)^n$ vertices and $\theta(n,\epsilon_1)$-large growth where $\theta(n,\epsilon_1)>0$ depends only on $n$ and $\epsilon_1$. 
\end{theorem}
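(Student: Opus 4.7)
The plan is to induct on the maximum number $n$ of hyperplanes of $X$ that pairwise intersect. Fix thresholds $\mu(n), \epsilon(n) > 0$ with $\mu(n) + \epsilon(n) < (1 - \epsilon_1)/2$, where $\epsilon(n)$ is also small enough to invoke Lemma~\ref{lem:keyLemma}. For each $H \in \H$, use the partition $S^\pm = \A(H) \sqcup \B(H) \sqcup \Psame(H) \sqcup \Pcross(H) \sqcup \Ppar(H)$ to sort $H$ into one of three regimes: (i) $|\B(H)| \geq \epsilon(n)|S^\pm|$; (ii) (i) fails but $|\A(H)| \geq \mu(n)|S^\pm|$ or $|\Psame(H)| \geq \mu(n)|S^\pm|$; (iii) neither (i) nor (ii) holds, forcing $|\P(H)| = |S^\pm| - |\A(H)| - |\B(H)| > (1 - \mu(n) - \epsilon(n))|S^\pm| > |S^\pm|/2$.

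Suppose every $H \in \H$ lies in regime (i) or (ii). In regime (i), Lemma~\ref{lem:keyLemma} with $\epsilon_0 = \epsilon(n)$ gives a checkpoint tree $T_H$ of depth at most $n - 1$ with $\alpha(\epsilon(n), \epsilon_1, n)$-large growth; in regime (ii), Corollary~\ref{cor:easyProgression} gives a depth-one $T_H$ with $\mu(n)$-large growth. Build $T_\s$ by attaching, for each $s \in S^\pm$, one edge labeled $s$ from $\s$ to a leaf labeled by a chosen $H_s \in \H_s$ (nonempty because $\Fix(x)$ is empty); this $T_\s$ has $1$-large growth. Lemma~\ref{lem:realization} glues $\{T_H\}_{H \in \H \cup \{\s\}}$ into a progressing automaton with growth at least $\min(\alpha(\epsilon(n), \epsilon_1, n), \mu(n))$. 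For the vertex bound, note $|\H| \leq n|S^\pm|$ (for each $s$, the first hyperplanes separating $x$ from $sx$ pairwise cross and so number at most $n$), each $T_H$ has $O(|S^\pm|^{n-1})$ internal vertices, and leaves are identified with roots of other $T_{H'}$, giving a total within $(|S^\pm|n + 1)^n$.

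Suppose instead some $H \in \H$ falls into regime (iii). Apply Lemma~\ref{lem:induction} with $\lambda = 1 - \mu(n) - \epsilon(n) > 1/2$, obtaining a symmetric $\mathsf S^\pm \subseteq S^\pm$ with $|\mathsf S^\pm| \geq (1 - 2\mu(n) - 2\epsilon(n))|S^\pm|$ such that at most $n - 1$ hyperplanes of $\H_{\mathsf S^\pm}$ pairwise intersect. The hypotheses descend to the $F_{\mathsf S}$-action: $\Fix(x)$ remains empty, and Lemma~\ref{lem:backwardsInductiveBound} gives $|\B(\mathsf H)| \leq \epsilon_1' |\mathsf S^\pm|$ with $\epsilon_1' = \epsilon_1/(1 - 2\mu(n) - 2\epsilon(n)) < 1$. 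The inductive hypothesis yields a progressing automaton for $F_{\mathsf S}$ on $X$; viewed over $S^\pm$, it is progressing for $F_S$, its vertex count is at most $(|\mathsf S^\pm|(n-1) + 1)^{n-1} \leq (|S^\pm|n + 1)^n$, and its growth, rescaled to $|S^\pm|$, picks up a factor of $|\mathsf S^\pm|/|S^\pm| \geq 1 - 2\mu(n) - 2\epsilon(n) > 0$.

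The base case $n = 1$ is automatic: in a tree, $N(H)$ consists of the two $0$-cells of the edge dual to $H$ on opposite sides of $H$, so any $s$ with $x, \hat sx \in N(H)$ in the same halfspace forces $\hat sx = x$, contradicting $\Fix(x) = \emptyset$; hence $\P(H) = \emptyset$ for every $H$ and regime (iii) is vacuous. The main obstacle is the constant management: choosing $\mu(n), \epsilon(n)$ at each level so that $\mu(n) + \epsilon(n) < (1 - \epsilon_1)/2$, and verifying that the recursively-defined lower bound $\theta(n, \epsilon_1) \defeq \min(\alpha(\epsilon(n), \epsilon_1, n), \mu(n), (1 - 2\mu(n) - 2\epsilon(n))\theta(n-1, \epsilon_1'))$ stays positive and depends only on $n$ and $\epsilon_1$, not on $|S^\pm|$ or the particular action.
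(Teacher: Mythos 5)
Your proposal is correct and takes essentially the same approach as the paper's own proof: an induction on the maximum number of pairwise-crossing visible hyperplanes, case-splitting on the sizes of $\A(H)$, $\B(H)$, $\P(H)$, with Lemma~\ref{lem:easyProgression}, Lemma~\ref{lem:keyLemma}, Lemma~\ref{lem:induction}, Lemma~\ref{lem:backwardsInductiveBound}, and Lemma~\ref{lem:realization} playing exactly the same roles. The only meaningful difference is in the constant management you flag at the end: the paper fixes $\lambda = 1 - 2\epsilon(n)$ once at the outset and tracks the degrading backward bound explicitly as $\epsilon_1/(2\lambda-1)^{n-i}$ through a decreasing sequence $\gamma(i)$, rather than re-choosing thresholds $\mu, \epsilon$ and re-scaling $\epsilon_1 \mapsto \epsilon_1'$ at each recursive level as you do.
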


\begin{proof}
    Let $\epsilon(n)$ be the function from Lemma~\ref{lem:keyLemma}, and let $z=\min\{\frac15, \epsilon(n)\}$. Let $\epsilon_0=\sup\{x\in (0,z] : \frac{\epsilon_1}{(2(1-2\epsilon_0)-1)^n}\leq 1\}$, and let $\lambda=1-2\epsilon_0$. These definitions are arranged so that $\epsilon_0\in (0,\epsilon(n)]$ and $2\lambda-1\in (0,1)$ and $\frac{\epsilon_1}{(2\lambda-1)^{n-i}}<1$ for $1\leq i\leq n$.

    Define a positive decreasing sequence $\gamma(i)$, $1\leq i\leq n$, as follows:
    
    \begin{itemize}
        \item Let $\gamma(1)=1-\frac{\epsilon_1}{(2\lambda-1)^{n-1}}$.
        \item For $1<i\leq n$, let $\gamma(i)=\min\big\{\epsilon_0,\alpha\big(\epsilon_0,\frac{\epsilon_1}{(2\lambda-1)^{n-i}},i\big),(2\lambda-1)\gamma(i-1)\big\}$.
    \end{itemize}
    
    Note that the sequence depends on the constant $\epsilon_1$. We will prove the following claim by induction on $i$ for $1\leq i\leq n$. \smallskip
    
    \textit{Claim.} Suppose that no more than $i$ hyperplanes of $\H$ pairwise cross and $|\B(H)|\leq \frac{\epsilon_1}{(2\lambda-1)^{n-i}}|S^\pm|$ for each $H\in \H$. Then there exists a progressing automaton with $\gamma(i)$-large growth. \smallskip
    
    The theorem is a consequence of this claim. Indeed, defining $\theta(n,\epsilon_1)=\gamma(n)$ and applying the claim for the case $i=n$ proves the theorem. \smallskip
    
    \textit{Proof of Claim.} If no two hyperplanes in $\H$ cross, i.e., $i=1$, then $\P(H)$ is empty for every $H\in\H$. Thus $\A(H)\sqcup \B(H)$ is a partition of $S^\pm$ for every $H\in \H$. By assumption we have $\B(H)\leq \epsilon_1|S^\pm|$ for each $H\in \H$, so in particular, we have the weaker bound $\B(H)\leq \frac{\epsilon_1}{(2\lambda-1)^{n-1}}|S^\pm|$. It follows that $\A(H)\geq (1-\frac{\epsilon_1}{(2\lambda-1)^{n-1}})|S^\pm|$ for each $H\in \H$. By Corollary~\ref{cor:easyProgression}, we can build a progressing checkpoint tree with $(1-\frac{\epsilon_1}{(2\lambda-1)^{n-1}})$-large growth, i.e., $\gamma(1)$-large growth. By Lemma~\ref{lem:realization}, we can construct a progressing automaton with $\gamma(1)$-large growth. We now prove the claim holds for $i$ supposing it holds for~$i-1$.
    
    Suppose $|\B(H)|\leq \frac{\epsilon_1}{(2\lambda-1)^{n-i}}|S^\pm|$ and no more than $i$ hyperplanes of $\H$ pairwise cross. If $|\B(H)|\geq \epsilon_0|S^\pm|$, then there exists a progressing checkpoint tree at $H$ with $\alpha(\epsilon_0,\frac{\epsilon_1}{(2\lambda-1)^{n-i}},i)$-large growth by Lemma~\ref{lem:keyLemma}. Similarly, if $|\A(H)|\geq \epsilon_0|S^\pm|$, then there exists a progressing checkpoint tree at $H$ with $\epsilon_0$-large growth by Corollary~\ref{cor:easyProgression}. By definition, $\gamma(i)$ is no greater than the above two growth constants. Thus if $|\A(H)|\geq \epsilon_0|S^\pm|$ or $|\B(H)|\geq \epsilon_0|S^\pm|$ for every $H\in \H$, then there exists a progressing automaton with $\gamma(i)$-large growth by Lemma~\ref{lem:realization}.

    Suppose that $|\A(H)|<\epsilon_0|S^\pm|$ and $|\B(H)|<\epsilon_0|S^\pm|$ for some $H\in \H$. Then $|\P(H)|\geq(1-2\epsilon_0)|S^\pm|=\lambda|S^\pm|$ since $\A(H)\sqcup \B(H)\sqcup \P(H)$ is a partition of $S^\pm$. By Lemma~\ref{lem:induction}, there exists $\mathsf S^\pm\subset S^\pm$ with $|\mathsf S^\pm|\geq (2\lambda-1)|S^\pm|$ such that no more than $i-1$ hyperplanes of $\H_{\mathsf S^\pm}$ pairwise cross. Additionally, by Lemma~\ref{lem:backwardsInductiveBound}, $|\B(\mathsf H)|\leq \frac{\epsilon_1}{(2\lambda-1)^{n-(i-1)}}|\mathsf S^\pm|$ for each $\mathsf H\in \H_{\mathsf S^\pm}$. Restrict the action to the subgroup $F_{\mathsf S}\subset F_S$, and apply the $i-1$ version of the claim to $F_{\mathsf S}$. By the inductive hypothesis, we obtain a progressing automaton $\Sigma$ over $\mathsf S^\pm$ with $\gamma(i-1)$-large growth. Considered as an automaton over $S^\pm$, $\Sigma$ has $(2\lambda-1)\gamma(i-1)$-large growth. By definition, $\gamma(i)\leq (2\lambda-1)\gamma(i-1)$, so $\Sigma$ has $\gamma(i)$-large growth.
\end{proof}

\section{Random Groups and Cubical fixed-points}
\label{sec:mainTheorem}

\subsection{Random group preliminaries}
\begin{definition}[Random group $G_{S,d,L}$]
\label{def:randomGroup}
    If $S^\pm=S\sqcup S^{-1}$ is a set of formal letters and their formal inverses, then the set $\mathcal R_L$ of length $L$ words over $S^\pm$ has cardinality $|S^\pm|^L$. A \emph{random set of relators $R_{d,L}$ at density $d\in(0,1)$ and length $L$} is formed by taking $|S^\pm|^{dL}$ uniformly random samples from $\mathcal R_L$. A \emph{random group over $S$ at density $d\in (0,1)$ and length $L$} is given by a presentation $G_{S,d,L}=\langle S\mid R_{d,L}\rangle$. A random group at density $d$ has a property $Q$ \emph{with overwhelming probability} (\emph{w.o.p.}) if $\Prob\{G_{S,d,L}\text{ has }Q\}\to 1$ as $L\to \infty$.
\end{definition}

\begin{definition}[$A$-density]
    Let $A\subset \N$ be infinite. A set of words $\L$ over $S^\pm$ has $A$-\emph{density $d\in (0,1)$} if for some constant $c>0$, the set $\L$ contains at least $c|S^\pm|^{dL}$ words of length $L$ for each $L\in A$.  
\end{definition}

If an automaton $\Sigma$ has $\lambda$-large growth for $\lambda> |S^\pm|^{-d}$, then its accepted language $\L_\Sigma$ has $A$-density $d'>1-d$ for every $A\subset \N$. Indeed, we have $(\lambda|S^\pm|)^L>|S^\pm|^{(1-d)L}$, so $|S^\pm|^{d'L}$ is intermediate to the two terms for some $d'>1-d$. Note for a fixed $\lambda>0$, the inequality $\lambda> |S^\pm|^{-d}$ is satisfied for $|S^\pm|$ sufficiently large. The following lemma follows from an application of Chebyshev's theorem. For more details see \cite{gromov:asymptoticInvariants, orlef:randomNotOrderable}.

\begin{lemma}
\label{lem:denseLanguageIntersection}
    Fix densities $d,d'\in (0,1)$ so that $d+d'>1$. If $\L$ is a set of words with $A$-density $d'$, then for $L\in A$, $\Prob\{R_{d,L}\cap \L\neq\emptyset\}\to 1$ as $L\to \infty$. Consequently, if $\Sigma$ is an automaton with $\lambda$-large growth where $\lambda> |S^\pm|^{-d}$, then $\Prob\{R_{d,L}\cap \L_\Sigma\neq \emptyset\}\to 1$ as $L\to \infty$.
\end{lemma}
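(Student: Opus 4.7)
The plan is to apply the second-moment (Chebyshev) method to the counting variable $N=|R_{d,L}\cap \L|$. The random set $R_{d,L}$ is obtained by drawing $M=(2k)^{dL}$ uniform samples from the set $W_L$ of length-$L$ words, so writing $N=\sum_{i=1}^M X_i$ with $X_i=\mathbf{1}[w_i\in \L]$, linearity of expectation gives
\[
    \mathbf{E}(N)=M\cdot\frac{|\L\cap W_L|}{(2k)^L}\geq c\,(2k)^{(d+d'-1)L},
\]
where the $A$-density hypothesis has been used. Since $d+d'>1$, we have $\mathbf{E}(N)\to\infty$ along $L\in A$.

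For the variance, the indicators $X_i$ are either independent (sampling with replacement) or negatively correlated (sampling without replacement, which is the usual convention), so in either case $\mathrm{Var}(N)\leq \sum_i\mathbf{E}(X_i)=\mathbf{E}(N)$. Chebyshev's inequality then yields
\[
    \Prob\{N=0\}\leq \Prob\{|N-\mathbf{E}(N)|\geq \mathbf{E}(N)\}\leq \frac{\mathrm{Var}(N)}{\mathbf{E}(N)^2}\leq \frac{1}{\mathbf{E}(N)}\longrightarrow 0,
\]
which proves the first assertion.

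For the consequence, I would translate the automaton data into an $A$-density statement. By the definition of $\lambda$-large growth there is a constant $K$ so that every directed path of length $K$ in $\Sigma$ extends in at least $(\lambda|S^\pm|)^{L-K}$ ways to an accepted word of length $L$; hence $|\L_\Sigma\cap W_L|\geq c'(\lambda|S^\pm|)^L$. Choosing $d'$ with $d+d'>1$ and $\lambda\geq |S^\pm|^{d'-1}$ (the natural reading of the stated hypothesis) yields $\mathbf{N}$-density $d'$ for $\L_\Sigma$, and the first part then applies.

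The genuine content is the variance bound; everything else is counting. The only point requiring mild care is the negative-correlation estimate when sampling without replacement, which reduces to the standard observation that two distinct uniform samples simultaneously land in $\L$ with probability at most $(|\L\cap W_L|/(2k)^L)^2$, so the cross terms in $\mathbf{E}(N^2)-\mathbf{E}(N)^2$ contribute nonpositively.
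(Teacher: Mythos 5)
Your proof is correct and uses exactly the second-moment/Chebyshev argument the paper alludes to (the paper does not write out a proof, instead citing Gromov and Orlef for the details, but notes explicitly that the lemma ``follows from an application of Chebyshev's theorem''). You also correctly diagnose the typo in the statement of the consequence --- ``where $|S^\pm|^{d-1}$'' is missing a comparison, and the intended hypothesis (consistent with the paper's later usage, e.g., $\lambda > 1/|S^\pm|^{md}$ in the proof of Theorem~\ref{thm:mainTheorem}) is $\lambda > |S^\pm|^{-d}$, equivalently $\lambda\geq|S^\pm|^{d'-1}$ for some $d'>1-d$, as you read it.
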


\begin{definition}[$H_m$, $\widehat S_m^\pm$, $\widehat S_m$]
\label{def:Hmsubgroup}
    For $m>0$ the subgroup $H_m<G_{S,d,L}$ is the subgroup generated by length $m$ words over $S^\pm$. Inversion induces a fixed-point free involution on the set of length $m$ words $\widehat S_m^\pm$. Thus for each $m>0$, we can fix a set of length $m$ words $\widehat S_m$ so that $\widehat S_m\sqcup \widehat S_m^{-1}$ is a partition of $\widehat S_m^\pm$.
\end{definition}

Note that $H_m$ has finite-index in $G_{S,d,L}$. Also note that $|\widehat S_m^\pm|=|S^\pm|^m$, so $|\widehat S_m^\pm|$ can be made arbitrarily large by increasing $m$. 

\begin{definition}[$\widehat G_m\twoheadrightarrow H_m$]
\label{def:mAssociatedGroup}
    For $m>0$, let $A_r(m)$ be the $r$-residue class modulo $m$. That is, $A_r(m)\subset \N$ is the set of $L$ such that $L=\widehat Lm+r$ with $r\in[0,m)$. 

    For $L\in A_r(m)$, if two relators of $G_{S,d,L}$ have the form $R_1w^{-1}$ and $wR_2$ for a word $w$ of length $r$, then $R_1R_2$ can be viewed as a word of length $2\widehat L$ over $\widehat S_m^\pm$. Let $\widehat R_{m,L}$ be the set of $R_1R_2$ for all such pairs $R_1w^{-1}$, $wR_2$. The group $\widehat G_m$ is defined by the presentation $\langle \widehat S_m \mid \widehat R_{m,L}\rangle$. Interpreting elements of $\widehat S_m$ as length $m$ words over $S^\pm$ defines a natural quotient map $\widehat G_m\twoheadrightarrow H_m$.
\end{definition}

\subsection{Cubical fixed-points}

\begin{lemma}
\label{lem:virtualFCn}
    Let $H< G$ be a finite-index subgroup. If $H$ has property $FW_n$, then $G$ has property $FW_n$. 
\end{lemma}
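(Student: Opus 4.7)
The plan is to fix an arbitrary action of $G$ on an $n$-dimensional $\mathrm{CAT}(0)$ cube complex $X$, produce a finite $G$-orbit in $X$, and then extract a global fixed point as its circumcenter in the $\ell^2$ metric.

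First, I would restrict the $G$-action on $X$ to $H$. Since $H$ has property $F\mathcal C_n$, there exists some $x \in X$ with $\hat h x = x$ for every $h \in H$. Because $H \subset \mathrm{Stab}_G(x)$ and $[G : H] < \infty$, the orbit $G \cdot x$ has cardinality at most $[G : H]$ and is, in particular, a bounded subset of $X$.

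Next, I would switch from the $\ell^1$ metric used elsewhere in the paper to the $\ell^2$ path metric on $X$. A finite-dimensional $\mathrm{CAT}(0)$ cube complex is a piecewise Euclidean complex with only finitely many isometry types of cells, so by Bridson's theorem it is a complete $\mathrm{CAT}(0)$ space under $\ell^2$, and the $G$-action is by isometries of this metric. Applying the Bruhat--Tits circumcenter construction to the bounded set $G \cdot x$, one obtains a unique circumcenter $c \in X$ (the center of its smallest enclosing ball). Any isometry of $X$ that permutes $G \cdot x$ must fix $c$ by uniqueness, so $c$ is a global $G$-fixed point, establishing $F\mathcal C_n$ for $G$.

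The main ``obstacle'' is only a bookkeeping point rather than a real difficulty: one must use the $\ell^2$ metric rather than the $\ell^1$ metric, since the circumcenter argument requires $\mathrm{CAT}(0)$ geometry and $\ell^1$ is not $\mathrm{CAT}(0)$ on cube complexes. With that switch, the argument is the standard reduction showing that any fixed-point property for actions on a class of complete $\mathrm{CAT}(0)$ spaces which is closed under restriction passes from a finite-index subgroup to the ambient group.
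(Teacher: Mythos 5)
Your proposal is correct and is essentially the same argument the paper gives: restrict to $H$ to get a fixed $0$-cube $x$, observe that $Gx$ is finite because $[G:H]<\infty$, and apply the Bruhat--Tits circumcenter theorem for complete $\mathrm{CAT}(0)$ spaces (the paper cites \cite[Corollary~2.8]{bridsonHaefliger}, which is exactly this). Your remark about passing to the $\ell^2$ metric is a helpful clarification of what that citation implicitly requires, but it is not a different route.
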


\begin{proof}
    Fix an action of $G$ on an $n$-dimensional $\mathrm{CAT}(0)$ cube complex $X$, and consider a global fixed-point of $H$, i.e., a point $x\in X$ such that $Hx=x$. The orbit $Gx$ is finite, since $H$ is finite-index. By \cite[Corollary~2.8]{bridsonHaefliger}, the fixed-point set of $G$ is non-empty. 
\end{proof}

Note that a global fixed-point is not necessarily a $0$-cube. Indeed, $\Z/2\Z$ acts by reflection on a $1$-cube fixing only the midpoint. The converse of Lemma~\ref{lem:virtualFCn} does not hold in general. For example, the 333-triangle group has $FW_1$ but is virtually $\Z^2$. 


\begin{lemma}
\label{lem:quotientFCn}
    If $G$ has property $FW_n$, then any quotient of $G$ has property $FW_n$.
\end{lemma}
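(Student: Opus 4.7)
The plan is to use the standard pullback of an action along the quotient map. Let $\pi\colon G\twoheadrightarrow Q$ be a surjection and suppose $Q$ acts on an $n$-dimensional $\mathrm{CAT}(0)$ cube complex $X$. I would define an action of $G$ on $X$ by $g\cdot x \defeq \pi(g)\cdot x$. This is clearly a cubical isometric action (composition of a homomorphism with a homomorphism into $\Isom(X)$), and $X$ remains $n$-dimensional, so the hypothesis that $G$ has $F\mathcal C_n$ applies.

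By $F\mathcal C_n$ for $G$, there is a point $x\in X$ with $g\cdot x=x$ for every $g\in G$, i.e.\ $\pi(g)\cdot x = x$ for every $g\in G$. Since $\pi$ is surjective, every element of $Q$ fixes $x$, which gives the desired global fixed point for the original $Q$-action.

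There is no real obstacle here; the only thing to note is that pulling back an action along a surjection does not increase dimension, so the $F\mathcal C_n$ hypothesis for $G$ really is applicable (in contrast to, say, $F\mathcal C$, where finite-dimensionality is not assumed and this lemma would still go through verbatim). The proof can be written in three or four lines.
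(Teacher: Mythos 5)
Your proof is correct and uses exactly the same approach as the paper: pull back the $Q$-action to a $G$-action via the quotient map, apply $F\mathcal C_n$ for $G$, and note that the resulting fixed point is $Q$-invariant by surjectivity. Nothing to add.
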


\begin{proof}
    Let $G\twoheadrightarrow H$ be a group epimorphism. Any action $H\to \Isom(X)$ of $H$ on a $\mathrm{CAT}(0)$ cube complex $X$ can be turned into an action $G\twoheadrightarrow H\to \Isom(X)$ of $G$ on $X$. A global fixed-point of $G\to \Isom(X)$ is also a global fixed-point of $H\to \Isom(X)$.
\end{proof}

\begin{proof}[Proof of Lemma~\ref{lem:shortExactSequence}] 
    Let $X$ be a $\mathrm{CAT}(0)$ cube complex. For $0$-cubes $x$, $y$, and $z$, the \emph{median} $\mu(x,y,z)$ is the unique $0$-cube contained in the intersection of halfspaces containing a majority of $x$, $y$, and $z$. A collection of $0$-cubes $Y\subset X$ closed under taking medians is a \emph{median subalgebra}. Any median subalgebra $Y\subset X$ can be realized as the $0$-cubes of its own $\mathrm{CAT}(0)$ cube complex $X_Y$ in such a way that the medians of $X$ and $X_Y$ agree on $Y$. Additionally, $X_Y$ is uniquely determined by the median structure on $Y$, the dimension of $X_Y$ is no greater than the dimension of $X$, and any median preserving action on $Y$ extends to an action on $X_Y$. For more information, see \cite{nica:cubeSpacesWalls}. 

    Suppose groups $K$ and $N$ have $FW_n$, and suppose $G$ fits into a short exact sequence of the form $1\to K\to G\to N\to 1$. We show $G$ has $FW_n$. Let $G$ act on an $n$-dimensional $\mathrm{CAT}(0)$ cube complex $X$. This action induces an action of $K$ on $X$. By assumption, $K$ fixes a point $x\in X$ and thus stabilizes the open cube $C$ containing $x$. Thus after passing to the cubical subdivision of $X$, we can assume $K$ fixes a $0$-cube (e.g. the center of $C$). 
    Let $Y$ denote the set of $0$-cubes fixed by $K$, and observe that $Y$ is a median subalgebra. Since $K$ fixes $Y$ pointwise, there is an action of $N=G/K$ on $Y$. Extend this to an action of $N$ on $X_Y$. Note $X_Y$ is $n$-dimensional, since $X$ is $n$-dimensional. Since $N$ has $FW_n$, there exists a global fixed-point in $X_Y$. The cube containing this fixed-point is stabilized by $N$, so there exists $y\in Y$ with $Ny$ finite. Since $Ny=NKy=Gy$, the action of $G$ on $X$ has a finite orbit. By \cite[Corollary~2.8]{bridsonHaefliger}, there exists a global fixed-point of the $G$-action on $X$.
\end{proof}

\subsection{Main Theorem}

\begin{definition}[Language $\L_\Sigma^v$]
\label{def:prefix}
    Fix $m>0$. Let $\Sigma$ be an automaton over $\widehat S_m^\pm$, and let $v\in \Sigma$ be a vertex. We define $\mathcal L_\Sigma^v=\{w:w \text{ labels a directed path } P\to \Sigma \text{ with initial vertex } v\}$.
\end{definition}

\begin{remark}
\label{rem:differentDensities}
    Note that $\mathcal L_\Sigma=\mathcal L_\Sigma^{\mathfrak s}$ where $\mathfrak s\in \Sigma$ is the start vertex, so the accepted language is a special case of Definition~\ref{def:prefix}. For any $v\in \Sigma$ the language $\mathcal L_\Sigma^v$ is over $\widehat S_m^\pm$, but we can also view it as a language over $S^\pm$. Suppose $\Sigma$ has $\lambda$-large growth for $\lambda>|\widehat S^\pm_m|^{-d}$. Then $\mathcal L_\Sigma^v$ has density $d'>1-d$ as a language over $\widehat S_m^\pm$. Viewed as a language over $S^\pm$, the words of $\mathcal L_\Sigma^v$ all have length a multiple of $m$. Let $L\in A_0(m)$, and let $L=\widehat Lm$. By $\lambda$-large growth, $\mathcal L_\Sigma^v$ contains $c(\lambda|S_m^\pm|)^{\widehat L}$ words of length $\widehat L$ over $\widehat S_m^\pm$ for some fixed $c>0$. Since $\lambda>|\widehat S_m^\pm|^{-d}=|S^\pm|^{-md}$, we have $c(\lambda|\widehat S_m^\pm|)^{\widehat L}=c(\lambda^{1/m}|S^\pm|)^{\widehat Lm}>c(|S^\pm|^{d''})^L$ for some $d''>1-d$. This computation shows $\mathcal L_\Sigma^v$ has $A_0(m)$-density $d''>1-d$ as a language over $S^\pm$. As a consequence, for any $v\in \Sigma$ and for any word $w$ of length $r\in [0,m)$ over $S^\pm$, the languages $w\mathcal L_\Sigma^v$ and $\mathcal L_\Sigma^vw$ have $A_r(m)$-density $d''>1-d$ as languages over $S^\pm$.
\end{remark}

\begin{proof}[Proof of Theorem~\ref{thm:mainTheorem}]
    Set $\lambda=\min\{\frac13,\frac23\theta(n,\frac34)\}$ where $\theta(n,\epsilon_1)$ is the function from Theorem~\ref{thm:mainTechnicalTheorem}. Fix $S^\pm$ and $d\in(0,1)$, and take $m$ sufficiently large so that $\lambda>|S^\pm|^{-md}$. Let $\widehat S_m$ and $\widehat S_m^\pm$ be as in Definition~\ref{def:Hmsubgroup}, and note that $\lambda>|\widehat S_m^\pm|^{-d}$. Let $\mathbf \Sigma$ be the set of all automata over $\widehat S_m^\pm$ with $\lambda$-large growth and at most $(|\widehat S_m^\pm|n+1)^n$ vertices.

    We aim to show that $\widehat R_{m,L}\cap \mathcal L_\Sigma\neq \emptyset$ for every $\Sigma\in \mathbf \Sigma$ with overwhelming probability (w.o.p.), i.e., $\Prob\{\forall\Sigma\in\mathbf \Sigma(\widehat R_{m,L}\cap \mathcal L_\Sigma\neq \emptyset)\}\to 1$ as $L\to \infty$. Since $\mathbf\Sigma$ is finite, it suffices to prove the intersection is nonempty for a fixed $\Sigma\in \mathbf\Sigma$. It also suffices to prove the statement for $L\in A_r(m)$ for each $r\in[0,m)$.
    
    Fix $\Sigma\in \mathbf\Sigma$ and $r\in[0,m)$. It suffices to show that w.o.p. there exists a pair of the form $R_1w^{-1}, wR_2\in R_{d,L}$ such that $R_1R_2\in \mathcal L_\Sigma$, since by definition $R_1R_2\in \widehat R_{m,L}$. Fix a word $w$ of length $r$ over $S^\pm$. As discussed in Remark~\ref{rem:differentDensities}, for all $v\in \Sigma$, $\mathcal L_\Sigma^v w^{-1}$ and $w\mathcal L_\Sigma^v$ have $A_r(m)$-density $d'>1-d$ as languages over $S^\pm$. Thus by Lemma~\ref{lem:denseLanguageIntersection}, for $L\in A_r(m)$ and all $v\in \Sigma$, the intersections $R_{d,L}\cap \mathcal L_\Sigma^v w^{-1}$ and $R_{d,L}\cap w\mathcal L_\Sigma^v$ are nonempty w.o.p. Let $R_1w^{-1}\in R_{d,L}\cap \mathcal L_\Sigma^\s w^{-1}$, and let $u$ be the terminal vertex of the path $P\to \Sigma$ beginning at $\s$ with label $R_1$. Let $wR_2\in R_{d,L}\cap w\mathcal L_\Sigma^u$. Then by definition $R_1R_2$ is the label of a path $P\to \Sigma$ beginning at $\s$, i.e., $R_1R_2\in \mathcal L_\Sigma$. 

    For each $r\in[0,m)$ we have shown, for $L\in A_r(m)$, that $\Prob\{\forall\Sigma\in\mathbf \Sigma(\widehat R_{m,L}\cap \mathcal L_\Sigma\neq \emptyset)\}\to 1$ as $L\to \infty$. Thus the statement holds for all $L\in\mathbf N$. From this we deduce that $\widehat G_m$ has property $FW_n$ w.o.p.  If $\widehat G_m$ acts without global fixed-point on an $n$-dimensional $\mathrm{CAT}(0)$ cube complex $X$, then there is an induced action of $F_{\widehat S_m}$ on $X$ without global fixed-point. Fix a basepoint $x\in X^0$ minimizing $\sum_{s\in \widehat S_m^\pm}\dist(x,sx)$. If $|\mathrm{Fix}_{\widehat S_m}(x)|\geq \frac13|\widehat S_m^\pm|$, then there exists $\Sigma\in \mathbf\Sigma$ such that $wx\neq x$ for all $w\in\L_\Sigma$ by Lemma~\ref{lem:generatorsWithFixedPoint}. With overwhelming probability this leads to a contradiction, since $\mathcal L_\Sigma\cap \widehat R_{m,L}\neq \emptyset$ w.o.p. Suppose $|\Fix(x)|<\frac13|\widehat S_m^\pm|$. Then we have $|\mathsf S^\pm|\geq \frac23|\widehat S_m^\pm|$ where $\mathsf S^\pm=\widehat S_m^\pm-\mathrm{Fix}_{\widehat S_m}(x)$. For each $H\in \H$, $|\mathcal B(H)|\leq \frac12|\widehat S_m^\pm|$ by Lemma~\ref{lem:backwardsBound}, so $|\B(H)|\leq \frac34|\mathsf S^\pm|$. By Theorem~\ref{thm:mainTechnicalTheorem}, there exists a progressing automaton $\Sigma$ over $\mathsf S^\pm$ with at most $(|\mathsf S^\pm|n+1)^n$ vertices and $\theta(n,\frac34)$-large growth. As an automaton over $\widehat S_m^\pm$, $\Sigma$ has $\frac23\theta(n,\frac34)$-large growth. Since $\lambda\leq \frac23\theta(n,\frac34)$ and $(|\mathsf S^\pm|n+1)^n<(|\widehat S_m^\pm|n+1)^n$, we have $\Sigma\in \mathbf \Sigma$. Since $\Sigma$ is progressing, $wx\neq x$ for all $w\in \mathcal L_\Sigma$. As before, this leads to a contradiction w.o.p.

    If $\widehat G_m$ has $FW_n$, then $H_m$ has $FW_n$ by Lemma~\ref{lem:quotientFCn}. If $H_m$ has $FW_n$, then $G_{S,d,L}$ has $FW_n$ by Lemma~\ref{lem:virtualFCn}. Since $\widehat G_m$ has $FW_n$ w.o.p. so does $G_{S,d,L}$.
\end{proof}

\subsection{Constructing $FW_n$ groups}
We provide a method for constructing $FW_n$ groups by carefully choosing relators of a defining presentation. There is enough flexibility in the choice of relators that one can construct interesting examples. We hope the following will be a useful ``black-box'' for those wishing to construct examples of $FW_n$ groups.

\begin{theorem}
\label{thm:constructingExamples}
    Let $S^\pm$ be a finite set of formal letters and their inverses, and let $k=|S^\pm|$. For any $n>0$, there exists a finite set of automata $\mathbf\Sigma$ over $S^\pm$ so that the following holds: If $G$ is defined by a presentation $\langle S\mid R\rangle$ such that $R$ contains at least one word from $\L_\Sigma$ for each $\Sigma\in \mathbf \Sigma$, then $G$ has $FW_n$. 

    Moreover, there are constants $0<c_1, c_2<1$, depending only on $n$, so that each $\Sigma=(V,E)\in \mathbf \Sigma$ has one of the following two forms:
    \begin{itemize}
        \item $\Sigma$ has two vertices $V=\{\s,v\}$, two directed edges joining $\s$ to $v$ labeled $s$ and $s^{-1}$ for some $s\in S$, and $c_1|S^\pm|$ directed loops at $v$ with labels in $S^\pm-\{s^\pm\}$.
        \item The start vertex $\s$ has $(1-c_1)|S^\pm|$ outgoing edges, none of which are loops, and every vertex $v\in V-\{\s\}$ has $c_2|S^\pm|$ outgoing edges with terminal vertices in $V-\{\s\}$.
    \end{itemize}
    In particular, if $|S^\pm|$ is sufficiently large, then for any $p\geq 1$, one can choose words in $\L_\Sigma$ for each $\Sigma\in \mathbf\Sigma$, so that the set of chosen words satisfies the $C'(\frac{1}{p})$ condition.
\end{theorem}

\begin{proof}
    Fix some $0<c_1<1$. Take an action of $F_S$ on an $n$-dimensional $\mathrm{CAT}(0)$ cube complex without global fixed-point, and let $x\in X^0$ be a $0$-cube minimizing $\sum_{s\in S^\pm}\dist(x,sx)$. If $|\Fix(x)|\geq c_1|S^\pm|$, it follows from the proof of Lemma~\ref{lem:generatorsWithFixedPoint} that there exists an automaton $\Sigma$ of the first form so that $wx\neq x$ for each $w\in \L_\Sigma$. If $|\Fix(x)|<c_1 |S^\pm|$, then $\mathsf S^\pm\subset S^\pm-\Fix(x)$ is a symmetric set of generators, and the fix-set $\mathrm{Fix}_{\mathsf S}(x)$ of $F_{\mathsf S^\pm}$ is empty. For each $\mathsf H\in \H_{\mathsf S^\pm}$, we have $\B(\mathsf H)\leq \frac{1}{2(1-c_1)}|\mathsf S^\pm|$ by Lemma~\ref{lem:backwardsBound} and Lemma~\ref{lem:backwardsInductiveBound}. By Theorem~\ref{thm:mainTechnicalTheorem}, there exists a progressing automaton $\Sigma$ over $\mathsf S^\pm$ with $c_2=\theta(n,\frac{1}{2(1-c_1)})$-large growth and at most $(kn+1)^n$ vertices. Since $\Sigma$ is progressing, we have $wx\neq w$ for each $w\in \L_\Sigma$. Moreover, it can be checked that the automaton $\Sigma$ constructed in Theorem~\ref{thm:mainTechnicalTheorem} is of the second form. 

    Let $\mathbf\Sigma$ be the set of all automata with at most $(kn+1)^n$ vertices that are one of the two forms in the theorem. Let $G$ be a group defined by a presentation $\langle S\mid R\rangle$ such that $R$ contains at least one word from $\L_\Sigma$ for each $\Sigma\in\mathbf\Sigma$. Suppose $G$ acts without global fixed-point on an $n$-dimensional $\mathrm{CAT}(0)$ cube complex $X$. The action induces an action of $F_S$ on $X$ without global fixed-point. Let $x\in X^0$ be a $0$-cube minimizing $\sum_{s\in S^\pm}\dist(x,sx)$. Then there is some $\Sigma\in\mathbf\Sigma$ so that $wx\neq x$ for each $w\in\L_\Sigma$. This contradicts that $\L_\Sigma\cap R$ is non-empty.
\end{proof}

\subsection{Reduced words, the Rips construction, and an $FW_\infty$ cubulated group}\ 

Here we provide the proofs of Theorem~\ref{thm:reducedWords}, Theorem~\ref{thm:aMonster}, and Corollary~\ref{cor:ripsConstruction}, all stated in the introduction. The proof of Theorem~\ref{thm:reducedWords} is essentially the same as that of Theorem~\ref{thm:mainTheorem} but with one additional difficulty --- a reduced word over $\widehat S_m^\pm$ is not necessarily reduced when viewed as a word over $S^\pm$. The needed adjustment follows an idea similar to those in \cite[Lemma~3.4]{dahmaniguirardelprzytycki:randomDontSplit}, \cite[Lemma~5.5]{munro:randomCAT(0)SquareComplex}. We sketch the argument below. 

\begin{figure}[h]
    \centering
    \begin{subfigure}[h]{0.3\textwidth}
        \centering
        \includegraphics[width=\textwidth]{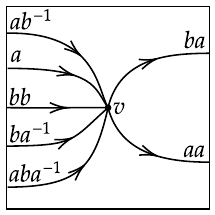}
        \caption{The original $v$.}	
        \label{}
    \end{subfigure}
	\ \ \ \ \ 
    \begin{subfigure}[h]{0.3\textwidth}
        \centering	
	\includegraphics[width=\textwidth]{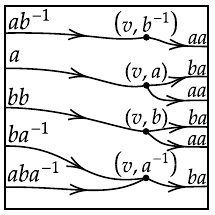}
        \caption{After ``rewiring''.}	
        \label{}
    \end{subfigure}
    \caption{A local picture of the replacement of $v$ with $\{(v,s)\}_{s\in S^\pm}$. In this example $S^\pm=\{a^\pm, b^\pm\}$.}
    \label{fig:rewire}
\end{figure}

\begin{proof}[Sketch for Theorem~\ref{thm:reducedWords}.]
    Replacing $R_{d,L}$, $\widehat S_m$, and $\widehat S_m^\pm$ by their reduced word analogues, the proof of Theorem~\ref{thm:mainTheorem} can mostly be repeated. Suppose $F_{\widehat S_m}$ acts without global fixed-point on an $n$-dimensional $\mathrm{CAT}(0)$ cube complex $X$, and $x\in X^0$ minimizes $\sum_{s\in \widehat S_m^\pm}\dist(x,sx)$. Lemma~\ref{lem:generatorsWithFixedPoint} and Theorem~\ref{thm:mainTechnicalTheorem} guarantee the existence of a progressing automaton $\Sigma$ over $\widehat S_m^\pm$ with $\lambda$-large growth such that $wx\neq x$ for each $w\in \mathcal L_\Sigma$. Moreover, $\lambda$ and the size of $\Sigma$ depend only on the dimension of $X$. 
    
    Let $\mathbf\Sigma$ be a finite collection of $\lambda$-large automata such that for any action of $F_{\widehat S_m}$ on an $n$-dimensional $\mathrm{CAT}(0)$ cube complex without global fixed-point, one can find $\Sigma\in \mathbf\Sigma$ as above. Elements of $\widehat S_m^\pm$ are length $m$ reduced words over $S^\pm$, and words in $\mathcal L_\Sigma$ are reduced words over $\widehat S_m^\pm$ for any $\Sigma\in \mathbf\Sigma$. However, $\mathcal L_\Sigma$ may contain unreduced words when viewed as a language over $S^\pm$. This can be remedied by adjusting the automata in $\mathbf \Sigma$, at the cost of increasing the number of vertices by a factor of $|S^\pm|$ and reducing the growth to $\lambda-\frac{1}{|S^\pm|}$.
    
    Let $\Sigma=(V,E)\in\mathbf\Sigma$, and let $v\in V-\{\s\}$ be a vertex with incoming edges $E_i$ and outgoing edges $E_o$. For $s\in S^\pm$, let $E_i(s)$ denote the incoming edges whose label is a word ending with $s$, and let $E_o(s)$ denote the outgoing edges whose label is a word not beginning with $s^{-1}$. Replace $v$ with a collection of vertices $\{(v,s)\}_{s\in S^\pm}$ and ``rewire'' the edges as follows. For each $s\in S^\pm$, change the incoming endpoints of edges in $E_i(s)$ to be $(v,s)$. Also for each $s\in S^\pm$, make a copy of $E_o(s)$ outgoing from $(v,s)$ with the same terminal vertices as the original $E_o(s)$. See Figure~\ref{fig:rewire}. Making these changes at each $v\in V$ produces an automaton $\Sigma'$ such that $\mathcal L_{\Sigma'}$ contains only reduced words when viewed as a language over $S^\pm$. Note that the fraction of words in $\widehat S_m^\pm$ beginning with a particular fixed letter is $\frac{1}{|S^\pm|}$. Thus for any vertex, if $|E_o|\geq \lambda|\widehat S_m^\pm|$, then $|E_o(s)|\geq \lambda-\frac{1}{|S^\pm|}$. Consequently each $\Sigma'$ has $(\lambda-\frac{1}{|S^\pm|})$-large growth since each $\Sigma\in \mathbf\Sigma$ has $\lambda$-large growth. If $\Sigma$ was such that $wx\neq x$ for each $w\in \mathcal L_\Sigma$, then $\mathcal L_{\Sigma'}$ has the same property since $\mathcal L_{\Sigma'}\subset \mathcal L_\Sigma$. Let $\mathbf \Sigma'$ be the set of all $\Sigma'$ for $\Sigma\in \mathbf \Sigma$.
    
    The remainder of the proof follows the same steps as in Theorem~\ref{thm:mainTheorem}, replacing $\mathbf\Sigma$ with $\mathbf\Sigma'$, provided that $|S^\pm|$ is sufficiently large to satisfy ${\lambda - \frac{1}{|S^\pm|}>0}$. We define $k(n) > 0$ to be large enough so that this inequality holds whenever $|S| > k(n)$. Under this assumption, we can choose $m$ so that $\lambda - \frac{1}{|S^\pm|} > |S^\pm|^{-md}$, and the proof proceeds as before.
\end{proof}

\begin{proof}[Proof of Theorem~\ref{thm:aMonster}]
    Removing a fixed finite number of relators from $R_{d,L}$ does not affect the density as $L\to \infty$. In particular, the conclusions of Corollary~\ref{cor:cubicalDimension} hold after removing a fixed finite number of relators. Thus, repeatedly applying Corollary~\ref{cor:cubicalDimension} produces a sequence of finite presentations $\{\langle S_i\mid R_i\cup \mathsf Q_i\rangle\}_{i\in \mathbf N}$ satisfying the following:
    
    \begin{enumerate}
        \item $|S_i|= k(i)$ where $k(i)$ is the function from Theorem~\ref{thm:reducedWords}. 
        \item $|\mathsf Q_i|=k(i-1)+k(i+1)$.
        \item Each $\langle S_i\mid R_i\cup \mathsf Q_i\rangle$ is torsion-free $C'(1/6)$ \cite{ollivier:invitationRandomGroups,gromov:asymptoticInvariants}.
        \item The group $G_i$ defined by $\langle S_i\mid R_i\rangle$ has $FW_i$.   
    \end{enumerate}

    The $R_i\cup\mathsf Q_i=R_{d,L}$ are random sets of reduced relators, and $\mathsf Q_i$ is a random selection of $k(i-1)+k(i+1)$ relators from $R_{d,L}$. Condition (4) is satisfiable because $R_i$ has the same density as $R_{d,L}$, as discussed above. We will need (3) just to know that $R_i$ contains no relators that are proper powers. 

    Let $B$ be $\mathbf N$ with its natural cell structure and with the presentation complex of $G_i$ wedge summed at each $i\in\mathbf N$. Let $\{s_{i,1},\ldots, s_{i,k(i)}\}$ be the $k(i)$ 1-cell loops wedge summed at $i$, corresponding to the generators of $G_i$, and let $t_i$ denote the 1-cell $[i,i+1]\subset \mathbf N$ of $B$. 
    
    Note that $\pi_1B=\ast_i G_i$. To obtain our desired group, we modify $B$ to obtain a $C'(1/6)$ complex $Z$ so that $\pi_1Z$ is a quotient of $G_i$ for each $i$. To do this, we add 2-cells so that each generator of $G_i$ at $i\in B$ becomes equal to a word in the generators of $G_{i-1}$ and also equal to a word in the generators of $G_{i+1}$. For each 1-cell loop $s_{i,j}$ at $i$, pick a 2-cell $l(i,j)$ in $\mathsf Q_{i-1}$ and a 2-cell $r(i,j)$ in $\mathsf Q_{i+1}$. Since $|\mathsf Q_i|=k(i-1)+k(i+1)$ for each $i$, this can be done so that the $l(i,j)$ and $r(i,j)$ are distinct for all $i\in \mathbf N$, $j\in \{1,\ldots, k(i)\}$. Let $\bar l(i,j)$ and $\bar r(i,j)$ denote the boundary paths of the $2$-cells $l(i,j)$ and $r(i,j)$. The complex $Z$ is obtained from $B$ by attaching 2-cells along the paths $s_{i,j}t_{i-1}^{-1}\bar l(i,j)t_{i-1}$ and $s_{i,j}t_i\bar r(i,j)t_i^{-1}$ for each $i\in \mathbf N$, $j\in \{1,\ldots, k(i)\}$. 

    Consider $s_{i,j}$ as an element of $G_i$. The paths $\bar l(i,j)$ and $\bar r(i,j)$ define elements of $G_{i-1}$ and $G_{i+1}$, and the $2$-cells attached to $B$ identify $s_{i,j}$ with $\bar l(i,j)$ and $\bar r(i,j)$. Thus we have $\pi_1 Z=\ast_iG_i/\llangle s_{i,j}=\bar l(i,j)=\bar r(i,j)\rrangle$. The equations $s_{i,j}=\bar l(i,j)=\bar r(i,j)$ show that each generator of $G_i$ can be written both as a word over the generators of $G_{i-1}$ and also as a word over the generators of $G_{i+1}$. Hence for any $i\in \mathbf N$, $\{s_{i,1},\ldots, s_{i,k(i)}\}$ generates $\pi_1 Z$ and is subject to all the relations in $G_i$. Consequently $\pi_1 Z$ is finitely generated and has $FW_n$ for all $n$. Since $Z$ is $C'(1/6)$ and no attaching map of a 2-cell is a proper power, $Z$ is aspherical. In particular $\pi_1 Z$ has geometric (and cohmological) dimension 2 \cite{lyndonschupp:combinatorialGrpTheory}. Since $Z$ is a locally finite aspherical $C'(1/6)$ complex, $\pi_1 Z$ acts freely on a locally finite $\mathrm{CAT}(0)$ cube complex \cite{wise:cubulatingC'(1/6)}.
\end{proof}

\begin{proof}[Proof of Corollary~\ref{cor:ripsConstruction}] 
    Let $G=\langle a_1,\ldots a_m\mid r_1,\ldots r_l\rangle$, and let $k(n)$ be the function from Theorem~\ref{thm:reducedWords}. Let $C=\max\{3, |r_1|,\ldots, |r_l|\}$. Let $\langle b_1,\ldots, b_{k(n)}\mid R_{d,L}\rangle$ be a random group in the reduced words model with $0<d<\frac{1}{2p}$. Randomly select $l$ words $\{B_1,\ldots, B_l\}$ and $m\cdot k(n)$ words $\{B_{i,j}\}$, $1\leq i\leq m$, $1\leq j\leq k(n)$ from $R_{d,L}$ without replacement, and let $\mathcal B$ be the set of remaining words. Since $\mathcal B$ and $R_{d,L}$ differ by a random uniformly bounded set of words, they have the same density. Thus by Corollary~\ref{cor:cubicalDimension}, we can find a presentation $\langle b_1,\ldots, b_{k(n)}\mid \mathcal R\rangle$ so that:
    \begin{enumerate}
        \item $\langle b_1,\ldots, b_{k(n)}\mid \mathcal R\rangle$ is $C'(1/2p)$.
        \item The relators in $\mathcal R$ have length $L>4pC$.
        \item $\mathcal R$ partitions into $\mathcal B$, $\{B_1,\ldots, B_l\}$, and $\{B_{i,j}\}_{1\leq i\leq m, 1\leq j\leq k(n)}$.
        \item Both $\langle b_1,\ldots, b_{k(n)}\mid \mathcal R\rangle$ and $\langle b_1,\ldots, b_{k(n)}\mid \mathcal B\rangle$ have $FW_n$.
    \end{enumerate}
    
    Let $\Gamma=\langle a_1,\ldots, a_m,b_1\ldots, b_{k(n)}\mid \mathcal B, r_1B_1,\ldots, r_lB_l, a_ib_ja_i^{-1}=B_{i,j}\rangle$. The subgroup $K=\langle b_1,\ldots, b_{k(n)}\rangle$ of $\Gamma$ is a is a quotient of $\langle b_1,\ldots, b_{k(n)}\mid \mathcal B\rangle$. Thus $K$ has $FW_n$ by Lemma~\ref{lem:quotientFCn}.
    
    The relators $a_ib_ja_i^{-1}=B_{i,j}$ imply $K$ is a normal subgroup of $\Gamma$, and it is easy to see that $\Gamma/K=G$. Thus $\Gamma$ has property $FW_n$ by Lemma~\ref{lem:shortExactSequence}. It remains to show that $\Gamma$ is $C'(1/p)$.
    
    Each relator of $\Gamma$ is a word in $\mathcal R$ concatenated with a word of length at most $C$. Thus if a word $w$ is a piece in $\Gamma$, then there exists a subword of $w$ of length $|w|-2C$ that is a piece in $\langle b_1,\ldots, b_{k(n)}\mid \mathcal R\rangle$. Let $w$ be a piece in $\Gamma$ that is $1/p$ the length of a relator. In particular, we have $|w|\geq \frac1p L$. From $L>4pC$ it follows that $\frac1p L-2C>\frac{1}{2p}L$. This inequality implies that there exists a subword $w'$ of $w$ that is a piece in $\langle b_1,\ldots, b_{k(n)}\mid \mathcal R\rangle$ and $1/2p$ the length of a relator. Since $\langle b_1,\ldots, b_{k(n)}\mid \mathcal R\rangle$ is $C'(1/2p)$, we conclude that $\Gamma$ must be $C'(1/p)$.
\end{proof}




\newpage
\bibliographystyle{alpha}
{\footnotesize
\bibliography{bibliography}}
\end{document}